\newtheorem{propo}{Proposition}[section]
\newtheorem{corol}[propo]{Corollary}
\newtheorem{theor}[propo]{Theorem}
\newtheorem{lemma}[propo]{Lemma}
\newtheorem*{theor*}{Theorem}
\theoremstyle{definition}
\newtheorem{defin}[propo]{Definition}
\theoremstyle{remark}
\newtheorem{remar}[propo]{Remark}
\newcommand{\NN }{\mathbb{N}}
\newcommand{\ZZ }{\mathbb{Z}}
\newcommand{\id }{\mathrm{id}}
\DeclareMathOperator{\SL}{SL}
\DeclareMathOperator{\mdop}{mod}
\newcommand{\et }{\eta}
\newcommand{\ql }{\left[}
\newcommand{\qr }{\right]}
\newcommand{\lemt }{t}
\newcommand{\md }[1]{\:\:(\mdop\: #1)}
\definecolor{darkgreen}{rgb}{0.0,0.1,0.6}
\title[Frieze patterns over finite commutative local rings]
{Frieze patterns over finite commutative local rings}
\author{Bernhard~B\"ohmler}
\address{Bernhard~B\"ohmler, Leibniz Universit\"at Hannover,
Institut f\"ur Algebra, Zah\-lentheorie und Diskrete Mathematik,
Fakult\"at f\"ur Mathematik und Physik,
Wel\-fengarten 1,
D-30167 Hannover, Germany}
\email{boehmler@math.uni-hannover.de}
\author{Michael~Cuntz}
\address{Michael Cuntz, Leibniz Universit\"at Hannover,
Institut f\"ur Algebra, Zah\-lentheorie und Diskrete Mathematik,
Fakult\"at f\"ur Mathematik und Physik,
Wel\-fengarten 1,
D-30167 Hannover, Germany}
\email{cuntz@math.uni-hannover.de}
\begin{document}

\keywords{frieze pattern, $\lambda$-quiddity, modular group, local ring.}
\subjclass[2020]{20H05, 05E99, 13F60, 51M20}

\begin{abstract}
We count numbers of tame frieze patterns with entries in a finite commutative local ring. For the ring $\mathbb{Z}/p^r\mathbb{Z}$, $p$ a prime and $r\in\mathbb{N}$ we obtain closed formulae for all heights. These may be interpreted as formulae for the numbers of certain relations in quotients of the modular group.
\end{abstract}

\maketitle

\section{Introduction}

\noindent The special linear group $\SL_2(\mathbb{Z})$ is, for example, generated by the following two matrices:
$$
S=\begin{pmatrix} 0 & -1 \\ 1 & 0 \end{pmatrix}, \quad
T=\begin{pmatrix} 1 & 1 \\ 0 & 1 \end{pmatrix}.
$$
Since $S^2=-\id$, any element in $\SL_2(\mathbb{Z})$ can be written (up to a sign) as a product of matrices of the form~$\eta(a):=T^a S$ for $a\in \mathbb{Z}$.
The relations among these matrices $\eta(a)$, $a\in \mathbb{Z}$, are particularly interesting.

For example, a sequence of positive integers $c_1,\ldots,c_n$ such that $\eta(c_1)\cdots\eta(c_n)=-\id$ is called a \emph{quiddity cycle} because it is the quiddity of a Conway-Coxeter frieze pattern. It turns out that these quiddity cycles are in bijection with triangulations of a convex $n$-gon by non-intersecting diagonals and are thus counted by Catalan numbers \cite{jChC73}.
More generally, one can count or parametrize the set of solutions to $\eta(c_1)\cdots\eta(c_n)=-\id$ with $c_1,\ldots,c_n\in\mathbb{Z}$ (cf.~\cite{CH19}),
or for example $\eta(c_1)\cdots\eta(c_n)=\pm\id$ with $c_1,\ldots,c_n\in\mathbb{N}_{>0}$ (cf.~\cite{O18}).
As in \cite{C19}, we call a solution to
$$
\eta(c_1)\cdots\eta(c_n)=\varepsilon\id
$$
an \emph{$\varepsilon$-quiddity cycle} (where $c_1,\ldots,c_n$ are elements of a ring and $\eta(\cdot)$ is as in Definition \ref{def:eta}, which is compatible with the above $\eta$).

A solution $c_1,\ldots,c_n\in \mathbb{Z}/N\mathbb{Z}$ to $\eta(c_1)\cdots\eta(c_n)=\pm\id$ may be viewed as a solution over $\mathbb{Z}$ to $\eta(c_1)\cdots\eta(c_n)\in \Gamma$ where $\Gamma$ is a congruence subgroup of $\SL_2(\mathbb{Z})$. Thus it is interesting to count the number of such solutions. With the chinese remainder theorem, this can be reduced to the case in which $N$ is a power of a prime.
Such a closed formula appeared first in \cite{MG21} for finite fields when the product is equal to $-\id$, a generalization with arbitrary matrix on the right is contained in \cite{CM24}, and the case of $\mathbb{Z}/2^r\mathbb{Z}$ of odd length is considered in \cite{M24}.
In this article we give closed formulae for these numbers of solutions over all rings $\mathbb{Z}/N\mathbb{Z}$, $N\in\mathbb{N}$, and a recursion for the case of an arbitrary matrix on the right side, i.e.\ for the solutions of $\eta(c_1)\cdots\eta(c_n)=A$ for arbitrary $A\in \SL_2(\mathbb{Z}/N\mathbb{Z})$.

In Section \ref{sec:localrings}, we first count the quiddity cycles of odd lengths in the more general case of commutative finite local rings:

\begin{theor}[Thm.\ \ref{thm:nfriezesI}]
Let $R$ be a finite commutative local ring with maximal ideal $I\unlhd R$, $\varepsilon\in \{\pm 1\}\subset R$, and $n\in\mathbb{N}_{>1}$ with $n$ odd or $(-1)^{n/2}\ne\varepsilon$.
If $\omega_n$ is the number of $\varepsilon$-quiddity cycles over $R/I$ of length $n$, then the number of $\varepsilon$-quiddity cycles over $R$ is
$$
\omega_n \cdot |I|^{n-3}.
$$
\end{theor}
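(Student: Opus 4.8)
The plan is to study the entrywise reduction map $\pi\colon R^n\to k^n$, where $k:=R/I$. Since $\pi$ comes from a surjective ring homomorphism $R\to k$ and $\eta(\cdot)$ is built from the ring operations, $\pi$ sends an $\varepsilon$-quiddity cycle over $R$ to one over $k$ (with $\varepsilon\in\{\pm1\}$ mapping to $\pm1$). It therefore suffices to prove that the fiber over each $\varepsilon$-quiddity cycle over $k$ has exactly $|I|^{n-3}$ elements, for then the total number of cycles over $R$ is $\omega_n\cdot|I|^{n-3}$. I will fix a cycle $\bar c=(\bar c_1,\dots,\bar c_n)$ over $k$ and count its lifts; throughout I assume $n\ge 3$ (the degenerate case $n=2$ being checked by hand).

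The computational heart is the identity
\[
\eta(x)\eta(y)\eta(z)=\begin{pmatrix} xyz-x-z & 1-xy \\ yz-1 & -y\end{pmatrix}.
\]
If a matrix $A\in\SL_2(R)$ has lower-right entry $A_{22}$ invertible, then comparing the $(2,2)$-, $(1,2)$- and $(2,1)$-entries forces
\[
y=-A_{22},\qquad x=(A_{12}-1)A_{22}^{-1},\qquad z=-(A_{21}+1)A_{22}^{-1}
\]
uniquely, and the $(1,1)$-entry is then automatically correct: two matrices of $\SL_2(R)$ sharing three entries, one of which is the invertible $A_{22}$, must coincide (solve $M_{11}M_{22}-M_{12}M_{21}=1$ for $M_{11}$). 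Thus a window of three consecutive factors whose product $A$ has $A_{22}\in R^\times$ is uniquely determined by $A$.

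Next I show that the hypothesis is exactly what forces the existence of a unit among the $\bar c_i$. Over the field $k$ an element is a unit iff it is nonzero, so the only cycle with no unit entry is the all-zero one, whose product is $\eta(0)^n=S^n$. As $S^2=-\id$, we have $S^n=(-1)^{n/2}\id$ when $n$ is even and $S^n=\pm S\notin\{\pm\id\}$ when $n$ is odd; hence $S^n=\varepsilon\id$ holds precisely when $n$ is even and $(-1)^{n/2}=\varepsilon$, i.e.\ precisely in the case excluded by the assumptions. Therefore every $\varepsilon$-quiddity cycle over $k$ has some entry $\bar c_j\in k^\times$.

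Finally I build the fiber bijection. Fix such an index $j$ and call the $n-3$ positions outside the window $\{j-1,j,j+1\}$ free; there are $|I|^{n-3}$ ways to lift those entries to $R$. Given any such lift, let $Q$ be the product of the factors at the free positions. Because $\varepsilon\id$ is central, the relation $\eta(c_1)\cdots\eta(c_n)=\varepsilon\id$ is equivalent to the cyclically rotated relation grouping the free factors first, which reads $Q\,\eta(c_{j-1})\eta(c_j)\eta(c_{j+1})=\varepsilon\id$, i.e.\ $\eta(c_{j-1})\eta(c_j)\eta(c_{j+1})=\varepsilon Q^{-1}=:A$. Reducing modulo $I$ gives $\bar A_{22}=-\bar c_j\in k^\times$, so $A_{22}\in R^\times$ since $R$ is local; by the second paragraph the window is then uniquely determined and, comparing the explicit formulas with their reductions, lifts $(\bar c_{j-1},\bar c_j,\bar c_{j+1})$. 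This exhibits a bijection between the fiber and the $|I|^{n-3}$ free lifts, giving the claim. The step I expect to cost the most care is exactly this one — making precise that cyclic rotation preserves the $\varepsilon$-quiddity relation, that the uniquely solved window really reduces to the prescribed residues, and that lifts of units stay units in the local ring $R$ — while the conceptually decisive point is the identification in the third paragraph that the all-zero cycle, the unique cycle lacking a unit entry, solves the relation precisely in the forbidden case.
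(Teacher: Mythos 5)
Your proposal is correct and follows essentially the same route as the paper: reduce modulo $I$, observe that the all-zero cycle (the only one without a unit entry) occurs exactly in the excluded case $n$ even with $\varepsilon=(-1)^{n/2}$, rotate so a unit sits inside a three-factor window, and solve that window uniquely from the product of the remaining $n-3$ freely lifted factors using the explicit form of $\eta(x)\eta(y)\eta(z)$ with invertible $(2,2)$-entry. The paper performs the same fiber count with the window at positions $1,2,3$; your extra care about cyclic rotation and the determinant recovering the $(1,1)$-entry only makes explicit what the paper leaves implicit.
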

As a corollary, this implies most of the previously known results for quiddity cycles over residue class rings.

In a second much more technical part (Section \ref{sec:pr}) we give closed formulae for the number of solutions of arbitrary lengths over $\mathbb{Z}/p^r\mathbb{Z}$, $p$ an arbitrary prime.
For $m\in\mathbb{N}$, $q\in \mathbb{Z}\setminus \{\pm 1\}$, we write $\binom{m}{2}_q := \frac{(q^m-1)(q^{m-1}-1)}{(q-1)(q^2-1)}$ and $[m]_q:=\frac{q^m-1}{q-1}$.

\begin{theor}[Thm.\ \ref{thm:nfriezes_even}]
Let $R=\mathbb{Z}/p^r\mathbb{Z}$ for a prime $p$ and $r\in\mathbb{N}$, $I=pR$ the maximal ideal, $n\in\mathbb{N}_{>1}$ with $n$ even, and $\varepsilon\in\{\pm 1\}$.
If $\omega_n$ is the number of $\varepsilon$-quiddity cycles over $R/I$ of length $n$, then the number of $\varepsilon$-quiddity cycles over $R$ is
$$
\begin{cases}
(\omega_n-1) \cdot p^{(r-1)(n-3)} + \sigma_n & \quad \text{if}\quad \varepsilon=(-1)^{n/2} \\
\omega_n \cdot p^{(r-1)(n-3)} & \quad \text{if}\quad \varepsilon=-(-1)^{n/2} \text{ and } p>2 \\
(\omega_n-1) \cdot p^{(r-1)(n-3)} & \quad \text{if}\quad \varepsilon=-(-1)^{n/2} \text{ and } p=2
\end{cases},
$$
where
\begin{eqnarray*}
\sigma_2 &=& 1, \quad\quad
\sigma_4=(r-1)p^r-(r-2)p^{r-1},
\\
\sigma_{n}&=&
p^{{\left(n/2 - 1\right)} r} [r-1]_{p^{n/2-2}} - p^{{\left(n/2 - 1\right)} r - 1} [r-2]_{p^{n/2-2}}.
\end{eqnarray*}
\end{theor}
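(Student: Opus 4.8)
The plan is to organize the count over $R=\ZZ/p^r\ZZ$ according to reduction modulo the maximal ideal $I=pR$, and to isolate the single cycle where the clean lifting of Theorem~\ref{thm:nfriezesI} breaks down. Every $\varepsilon$-quiddity cycle over $R$ reduces entrywise to a $\bar\varepsilon$-quiddity cycle over $R/I=\FF_p$, so I would write the total as a sum over cycles $\bar c=(\bar c_1,\dots,\bar c_n)$ over $\FF_p$ of the number of lifts of $\bar c$ to an $\varepsilon$-quiddity cycle over $R$. As in the proof of Theorem~\ref{thm:nfriezesI}, at every cycle over $\FF_p$ at which the word map $(c_1,\dots,c_n)\mapsto \eta(c_1)\cdots\eta(c_n)$ is a submersion onto $\SL_2$, the fibre of lifts has the expected size $|I|^{n-3}=p^{(r-1)(n-3)}$, and this local mechanism is insensitive to $p$. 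The first step is therefore to check that the only cycle over $\FF_p$ where this fails is the all-zeros cycle $\bar c=0$, for which $\eta(0)^n=S^n=(-1)^{n/2}\id$; the fact that Theorem~\ref{thm:nfriezesI} already holds cleanly whenever $(-1)^{n/2}\ne\varepsilon$ (so that the all-zeros cycle is not even counted) confirms that no other cycle is degenerate.

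Next I would extract the three cases from the position of this all-zeros cycle. Since $\eta(0)^n=(-1)^{n/2}\id$, the all-zeros cycle is counted in $\omega_n$ precisely when $(-1)^{n/2}\equiv\varepsilon\pmod p$, whereas it lifts to a genuine $\varepsilon$-cycle over $R$ precisely when $(-1)^{n/2}=\varepsilon$ in $R$. For $p>2$ the two conditions coincide, but for $p=2$ one has $-1\equiv1\pmod2$, so over $\FF_2$ the all-zeros cycle is always a $\bar\varepsilon$-cycle while over $R$ (with $r\ge2$) it lifts only when $\varepsilon=(-1)^{n/2}$. Combining this with the generic count gives the total $(\omega_n-\delta)\,p^{(r-1)(n-3)}$, to which $\sigma_n$ is added exactly when $\varepsilon=(-1)^{n/2}$; here $\delta=1$ precisely when the all-zeros cycle lies in $\omega_n$, and $\sigma_n$ denotes its number of lifts. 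Reading off $\delta$ and this last condition in the three regimes reproduces the stated case distinction (the $p=2$, $\varepsilon=-(-1)^{n/2}$ row giving $(\omega_n-1)$ and no $\sigma_n$).

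It remains to compute $\sigma_n=\#\{(c_1,\dots,c_n)\in(pR)^n:\eta(c_1)\cdots\eta(c_n)=(-1)^{n/2}\id\}$, the number of lifts of the all-zeros cycle. Writing $\eta(c)=S(\id-cE_{21})$ and pulling the factors of $S$ to the left, using $S^2=-\id$ and $S^n=(-1)^{n/2}\id$ to cancel the target, I would reduce this to counting tuples in $(pR)^n$ with
$$
U(c_1)\,L(c_2)\,U(c_3)\cdots U(c_{n-1})\,L(c_n)=\id,\qquad U(a)=\begin{pmatrix}1&a\\0&1\end{pmatrix},\ L(b)=\begin{pmatrix}1&0\\-b&1\end{pmatrix},
$$
an alternating product of upper and lower unipotents with entries in $pR$; equivalently, a system of continuant equations. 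For $n=2$ this forces $c_1=c_2=0$, giving $\sigma_2=1$, and for $n=4$ it collapses to the single condition $c_1c_2=0$ in $\ZZ/p^r\ZZ$, whose solution count is readily seen to be $(r-1)p^r-(r-2)p^{r-1}=\sigma_4$.

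The main obstacle is the general count. Since every factor lies in the congruence subgroup $\{M\equiv\id\pmod p\}$, I would expand the product $p$-adically along the filtration of this group (equivalently via Baker--Campbell--Hausdorff in $p\,\mathfrak{sl}_2(R)$), obtaining a triangular system: the first-order part gives the two linear conditions $\sum_{i\text{ odd}}c_i=0$ and $\sum_{i\text{ even}}c_i=0$, and the higher commutator terms contribute conditions of degree $\ge2$ living in successively deeper powers of $p$. One already sees from the case $r=2$, where all products of two or more entries vanish, that only the linear conditions survive and $\sigma_n=p^{n-2}$, matching $[r-1]_{q}$ at $r=2$. For general $r$ I would stratify the solutions by the $p$-adic valuations of the $c_i$ and induct on $r$, the deeper strata reproducing a rescaled copy of the same problem; this is what generates the geometric progression $[r-1]_{q}$ with ratio $q=p^{n/2-2}$, and hence the closed form $p^{(n/2-1)r}[r-1]_{p^{n/2-2}}-p^{(n/2-1)r-1}[r-2]_{p^{n/2-2}}$. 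I expect the genuinely delicate point to be controlling the degree-$\ge2$ corrections uniformly across all valuation strata and verifying that they assemble into exactly this $q$-analogue rather than some nearby expression; the explicit values $\sigma_2,\sigma_4$ and the $r=2$ computation serve as the anchors for the induction.
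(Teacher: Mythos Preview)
Your reduction-mod-$p$ framework and the resulting case split are exactly how the paper proceeds in its proof of Theorem~\ref{thm:nfriezes_even}: one separates the all-zeros cycle from the rest, the nonzero cycles each lift in $|I|^{n-3}=p^{(r-1)(n-3)}$ ways by the argument of Theorem~\ref{thm:nfriezesI}, and the bookkeeping of whether $(0,\dots,0)$ is counted in $\omega_n$ versus whether it lifts over $R$ yields the three rows. Your computations of $\sigma_2$, $\sigma_4$ and the $r=2$ sanity check are also correct.

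The genuine gap is the computation of $\sigma_n$ for general $n$ and $r$. Your proposed route (rewrite as an alternating $U/L$ word, expand $p$-adically, stratify by valuations, ``the deeper strata reproduce a rescaled copy of the same problem'') is only a heuristic: you do not exhibit any self-similarity, and in fact the word $U(c_1)L(c_2)\cdots L(c_n)=\id$ with $c_i\in pR$ does \emph{not} rescale to the same problem over $\ZZ/p^{r-1}\ZZ$ in any obvious way, because the quadratic and higher terms mix strata. You yourself flag ``controlling the degree-$\ge 2$ corrections uniformly across all valuation strata'' as the delicate point, and that is precisely the step that is missing. Nothing in your sketch explains why the answer should be a difference of two $q$-integers with $q=p^{n/2-2}$ rather than some other combination.

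The paper takes a completely different route to $\sigma_n$: it never inducts on $r$. Instead it sets up, for every $z\in(-1)^{n/2}+I$, the refined count $\sigma_{z,n}=\#\{(c_i)\in I^n:\ql c_1,\dots,c_n\qr=\lambda_z\}$ and proves a recursion in $n$ (Theorem~\ref{thm:2nfriezesI}) via the reduction $\ql c,u,v,b,\dots\qr=\ql c',uv-1,b',\dots\qr$ together with the scaling Lemma~\ref{lem:lambda}, obtaining
\[
|\sigma_{z,n}|=\sum_{x\in -\varepsilon+I}|\sigma_{x,n-2}|\cdot\#\{(u,v)\in I^2: uv-1=x/z\}.
\]
Over $\ZZ/p^r\ZZ$ one then shows (Lemma~\ref{lem:sigma_const}) that $|\sigma_{z,n}|$ depends only on $\nu_p(z-(-1)^{n/2})$, which collapses the recursion to a finite linear system indexed by $\ell=1,\dots,r$ (Theorem~\ref{thm:recusion_for_sigma_n}); this is solved explicitly in Theorem~\ref{thm:sigma}, and $\sigma_n$ in the statement is the value at $\ell=r$. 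The point is that the recursion does not close on $\sigma_{\varepsilon,n}$ alone---one is forced to track all $\sigma_{z,n}$---which is exactly the structural fact your valuation-stratification idea would also have to uncover before it could be made to work.
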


Note that the numbers $\omega_n$ are known:
\begin{theor}[{\cite[Thm.\ 1]{MG21}, \cite[Thm.\ 1.1]{CM24}}]
Let $q$ be a prime power and $n\in\mathbb{N}$, $n>4$. Then the number $\omega_n$ of $\varepsilon$-quiddity cycles over $\mathbb{F}_q$ of length $n$ is
$$
\omega_n =
\begin{cases}
\left[\frac{n-1}{2}\right]_{q^{2}} & \quad \text{if}\quad n \equiv 1 \md{2}, \\
(q-1)\binom{n/2}{2}_{q}+q^{n/2-1} & \quad \text{if}\quad n \equiv 0 \md{2},\quad \varepsilon=(-1)^{n/2}, \\
(q-1)\binom{n/2}{2}_{q} & \quad \text{if}\quad n \equiv 0 \md{2}, \quad \varepsilon=-(-1)^{n/2}, \quad q \text{ odd}.
\end{cases}
$$
\end{theor}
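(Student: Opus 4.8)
The plan is to turn the matrix identity into a counting problem for walks on the projective line. Write $\eta(c)=\left(\begin{smallmatrix} c & -1 \\ 1 & 0\end{smallmatrix}\right)$ and $M_j=\eta(c_1)\cdots\eta(c_j)$. A short check shows that the first columns $u_j$ of $M_j$ satisfy the three-term recursion $u_j=c_j u_{j-1}-u_{j-2}$ with $u_0=\binom{1}{0}$, $u_{-1}=\binom{0}{-1}$, that $M_j=(u_j\mid -u_{j-1})$, and that, since $\det M_j=1$, each consecutive pair $(u_{j-1},u_j)$ is a unimodular basis. Thus $\eta(c_1)\cdots\eta(c_n)=\varepsilon\,\id$ is equivalent to $u_{n-1}=\binom{0}{-\varepsilon}$ and $u_n=\binom{\varepsilon}{0}$. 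Passing to directions $x_j=[u_j]\in\mathbb{P}^1(\mathbb{F}_q)$, prescribing $x_1,\dots,x_n$ with $x_j\neq x_{j-1}$ determines all scalars $c_j$ and all vectors $u_j$ uniquely, so a solution is the same as a walk on $\mathbb{P}^1(\mathbb{F}_q)$ with no two consecutive vertices equal, from the fixed flag $(x_{-1},x_0)=([0:1],[1:0])$ back to $(x_{n-1},x_n)=([0:1],[1:0])$, whose realized scalar equals $-\varepsilon$. Equivalently $\omega_n$ for sign $\varepsilon$ equals $N_n(\varepsilon\,\id):=\#\{(c_i):\prod\eta(c_i)=\varepsilon\,\id\}$, and the walks with correct end-directions are exactly the words with $\prod\eta(c_i)=\operatorname{diag}(\mu,\mu^{-1})$ lying in the diagonal torus.

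First I would count these walks \emph{ignoring} the scalar, i.e.\ compute $\sum_{\mu\in\mathbb{F}_q^\times}N_n(\operatorname{diag}(\mu,\mu^{-1}))$. This is the number of walks of length $n-1$ between two distinct vertices of the complete graph $K_{q+1}$ on $\mathbb{P}^1(\mathbb{F}_q)$; its adjacency matrix $J-I$ has eigenvalue $q$ once and $-1$ with multiplicity $q$, giving $\frac{q^{n-1}-(-1)^{n-1}}{q+1}$. The whole theorem then reduces to understanding how this total is distributed over the $q-1$ torus elements $\operatorname{diag}(\mu,\mu^{-1})$, the values at the two central elements $\pm\id$ (distinct for $q$ odd, $\mu=\pm1$) being the ones we want.

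The substance is exactly this distribution. I would isolate the dependence on $\mu$ by Fourier analysis over $\mathbb{F}_q^\times$: for each character $\chi$ the weighted sum $\sum_\mu\chi(\mu)N_n(\operatorname{diag}(\mu,\mu^{-1}))$ is the trace of the $n$-th power of the transfer element $\sum_{c}\eta(c)$ acting in the principal series representation $\pi_\chi$ of $\SL_2(\mathbb{F}_q)$ induced from $\chi$; since this element factors through the rank-one projection onto $\mathcal U$-invariants (with $\mathcal U$ the upper unipotent subgroup), the relevant spectrum collapses to the eigenvalues of a small intertwiner whose entries are Gauss sums. The trivial $\chi$ reproduces the bulk $q$-polynomial together with the parity factor $(-1)^{n-1}$; for $q$ odd the quadratic character $\chi\neq1$ with $\chi^2=1$ produces the splitting between $\varepsilon=(-1)^{n/2}$ and $\varepsilon=-(-1)^{n/2}$ and the extra summand $q^{n/2-1}$. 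Inverting the transform yields equidistribution for $n$ odd, hence $N_n(\varepsilon\,\id)=\frac{1}{q-1}\cdot\frac{q^{n-1}-1}{q+1}=\left[\frac{n-1}{2}\right]_{q^2}$ for both $\varepsilon$, and for $n$ even gives $N_n(\operatorname{diag}(\mu,\mu^{-1}))=(q-1)\binom{n/2}{2}_q$ at non-central $\mu$ together with the two special central values. One can instead verify the two claims — uniformity on non-central elements and the value at the distinguished central element — directly, and then fix the single remaining constant from the walk total of the previous step; the identity $(q-1)^2\binom{n/2}{2}_q+q^{n/2-1}=\frac{q^{n-1}+1}{q+1}$ shows this bookkeeping closes consistently.

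\textbf{Main obstacle.} The hard part is the Gauss-sum evaluation of the intertwiner eigenvalues in each principal series, and above all the correct tracking of the central character $\omega_\chi(-\id)$: it is precisely this sign that decides which of $\pm\id$ receives the $q^{n/2-1}$ correction, and hence encodes the dichotomy $\varepsilon=\pm(-1)^{n/2}$ as well as the separate behaviour when $p=2$.
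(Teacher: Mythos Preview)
The paper does not prove this statement; it is quoted from \cite{MG21} and \cite{CM24} as a known input for the main theorems, so there is no proof in the paper to compare your attempt against.

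On your sketch itself: the bijection between $\varepsilon$-quiddity cycles and non-backtracking walks on $\mathbb{P}^1(\mathbb{F}_q)$ with prescribed initial and terminal flags is correct, and the $K_{q+1}$ adjacency computation gives the right torus total $\sum_{\mu\in\mathbb{F}_q^\times}N_n(\lambda_\mu)=(q^{n-1}-(-1)^{n-1})/(q+1)$. For odd $n$ you do not need any character theory: the scaling symmetry recorded in the paper as Lemma~\ref{lem:lambda} shows that $(c_i)\mapsto(tc_1,t^{-1}c_2,\dots,tc_n)$ is a bijection between solutions of $\ql c_1,\dots,c_n\qr=\lambda_\mu$ and solutions of $\ql c_1,\dots,c_n\qr=\lambda_{t\mu}$, so $N_n(\lambda_\mu)$ is constant in $\mu$ and the odd formula drops out of your walk count immediately.

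For even $n$ your outline is not yet a proof. You assert that the principal-series trace collapses to a small intertwiner with Gauss-sum entries, that non-central torus elements all take the common value $(q-1)\binom{n/2}{2}_q$, and that the quadratic character singles out the sign $\varepsilon=(-1)^{n/2}$; but none of these computations is carried out, and you flag the Gauss-sum evaluation yourself as the main obstacle. The uniformity over non-central $\mu$ does not follow from any symmetry available here (Lemma~\ref{lem:lambda} is stated only for odd length), and the identity $(q-1)^2\binom{n/2}{2}_q+q^{n/2-1}=(q^{n-1}+1)/(q+1)$, while true, only checks consistency of the claimed answers with your walk total --- it does not determine the individual values. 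So the odd case is essentially complete once you invoke the scaling lemma, but the even case remains a programme rather than a proof.
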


\medskip
Finally in Section \ref{sec:recursion}, we determine recursions for the number of solutions to
$$
\eta(c_1)\cdots\eta(c_n)= A
$$
for $c_1,\ldots,c_n\in \mathbb{Z}/N\mathbb{Z}$, $N\in\mathbb{N}$ and $A$ an arbitrary matrix. A closed formula could theoretically be computed for each fixed matrix $A$.

\section{Quiddity cycles and relations}

\begin{defin}\label{def:eta}
Let $R$ be a ring and $a\in R$. Then we set
\[ \et(a):=\begin{pmatrix} a & -1 \\ 1 & 0 \end{pmatrix}. \]
For $c_1,\ldots,c_n\in R$ we write
\[ \ql c_1,\ldots,c_n \qr := \prod_{i=1}^n \et(c_i). \]
\end{defin}

\begin{lemma}\label{lem:uv1}
Let $R$ be a commutative ring, $a,b,u,v,x,y \in R$ such that $a,(uv-1)\in R^\times$. Then
\begin{eqnarray}
\label{eq:red43} \ql x,u,v,y \qr &=& \ql x+\frac{1-v}{uv-1}, uv-1, y+\frac{1-u}{uv-1} \qr, \\
\label{eq:red1} \ql x,1,y \qr &=& \ql x-1,y-1 \qr.
\end{eqnarray}
\end{lemma}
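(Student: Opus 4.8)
The plan is to prove both identities by direct matrix multiplication, using only the defining formula for $\et$ together with the invertibility of $uv-1$ (which is exactly what makes the fractions $\tfrac{1-v}{uv-1}$ and $\tfrac{1-u}{uv-1}$ meaningful; the elements $a,b$ in the hypothesis list play no role in these two relations). The one auxiliary computation I would record first is the two-factor product
\[
\et(a)\et(b)=\begin{pmatrix} ab-1 & -a \\ b & -1 \end{pmatrix},
\]
which I will reuse repeatedly to assemble the longer products.

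For \eqref{eq:red1} I would simply expand both sides. On the left, grouping $\et(x)\et(1)=\begin{pmatrix} x-1 & -x \\ 1 & -1\end{pmatrix}$ and multiplying by $\et(y)$ gives a matrix with no denominators; on the right, $\et(x-1)\et(y-1)=\begin{pmatrix} (x-1)(y-1)-1 & -(x-1) \\ y-1 & -1\end{pmatrix}$. A routine comparison of the four entries (the only nontrivial one being the top-left, where $(x-1)y-x=(x-1)(y-1)-1$) shows equality. This case needs no hypotheses and commutativity is not even required.

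For \eqref{eq:red43} write $w:=uv-1\in R^\times$, $p:=x+\tfrac{1-v}{w}$ and $q:=y+\tfrac{1-u}{w}$. On the left-hand side I would first form the central block $\et(u)\et(v)=\begin{pmatrix} w & -u \\ v & -1\end{pmatrix}$ and then multiply by $\et(x)$ and $\et(y)$ to obtain
\[
\ql x,u,v,y\qr=\begin{pmatrix} (xw-v)y-xu+1 & v-xw \\ wy-u & -w\end{pmatrix}.
\]
On the right-hand side $\et(p)\et(w)=\begin{pmatrix} pw-1 & -p \\ w & -1\end{pmatrix}$, and multiplying by $\et(q)$ gives
\[
\ql p,w,q\qr=\begin{pmatrix} (pw-1)q-p & 1-pw \\ wq-1 & -w\end{pmatrix}.
\]
The comparison then reduces to substituting the values of $p$ and $q$.

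The bottom row and the top-right entry are immediate: $wq-1=wy+(1-u)-1=wy-u$ and $1-pw=1-xw-(1-v)=v-xw$, using $w\cdot\tfrac{1-u}{w}=1-u$ and $w\cdot\tfrac{1-v}{w}=1-v$. The genuine work, and the step I expect to be the main obstacle, is the top-left entry. Here I would first note the convenient simplification $pw-1=xw-v$, so that the top-left entry of the right-hand side becomes $(xw-v)q-p$; expanding with $q=y+\tfrac{1-u}{w}$ and $p=x+\tfrac{1-v}{w}$ and collecting the fractional terms produces the contribution $-\tfrac{v(1-u)+(1-v)}{w}=-\tfrac{1-uv}{w}=1$. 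This is precisely the cancellation that makes the top-left entry collapse to $(xw-v)y-xu+1$, matching the left-hand side. Throughout, commutativity of $R$ is used, while invertibility of $w=uv-1$ enters only through the division by $w$ in this final cancellation; no further hypotheses are needed.
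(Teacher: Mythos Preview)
Your proof is correct and is essentially the direct matrix computation that the paper defers to a citation: the paper merely refers to \cite[Lemma~4.2]{CH19} for \eqref{eq:red43} and observes that \eqref{eq:red1} is the special case $u=1$ of \eqref{eq:red43} (after cancelling the common factor $\eta(y)$ on the right), whereas you spell out both computations in full. Your remark that the listed elements $a,b$ play no role in the two identities is also accurate.
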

\begin{proof}
The first equation already appeared in \cite[Lemma 4.2]{CH19}; the second one is the special case when $u=1$.
\end{proof}

The following lemma was discovered by the second author 2022 when working with quiddity cycles over rings.
We shall not need it for our main theorems, but it is the key for the recursion in the last section of this article.
It is also the main ingredient for the main results of \cite{M24}; although the lemma was published there first, F.\ Mabilat acknowledged the origin of this lemma in his article.
Note that the result in \cite{M24} is a very special case of our result on local rings in Section \ref{sec:localrings} which is proven with a completely different idea.

\begin{lemma}
Let $R$ be a commutative ring, $c,u,v,b,d \in R$. Then
\begin{eqnarray}
\label{eq:red53N} \ql c,u,v,b,d \qr &=& \ql c-\frac{vb-2}{x}, x, d-\frac{uv-2}{x} \qr
\end{eqnarray}
for $$x := \frac{(vb-1)(uv-1)-1}{v}$$
if $v,x\in R^\times$.
\end{lemma}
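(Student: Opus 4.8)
The plan is to derive \eqref{eq:red53N} by applying the four-to-three reduction \eqref{eq:red43} of Lemma~\ref{lem:uv1} twice: first collapsing the prefix $\ql c,u,v,b\qr$, and then collapsing the resulting four-term word. Before doing so I would dispose of an invertibility technicality. Equation \eqref{eq:red43} requires the middle product minus one to be a unit, and the first application will need $uv-1\in R^\times$, which is \emph{not} among the hypotheses. To circumvent this I would pass to a universal situation. Over the polynomial ring $P=\ZZ[c,u,v,b,d]$ one checks that $(vb-1)(uv-1)-1=v(uvb-u-b)$, so $x=uvb-u-b$ is an honest element of $P$ and both sides of \eqref{eq:red53N} lie in $M_2(P[x^{-1}])$ (only $x$, not $v$, appears in the denominators of the claimed right-hand side). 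Since $P$ is a domain, the localization map $P[x^{-1}]\hookrightarrow P[x^{-1},(uv-1)^{-1}]$ is injective, so it suffices to prove the identity in the larger ring, where $uv-1$ is a unit; the identity over $R$ then follows by applying the ring homomorphism $P[x^{-1}]\to R$ determined by the given elements, which is legitimate because $x\in R^\times$.

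In the localized ring I would carry out the two reductions explicitly. Applying \eqref{eq:red43} to the first four letters gives
\[
\ql c,u,v,b,d\qr=\ql c',\,uv-1,\,b',\,d\qr,\qquad c'=c+\tfrac{1-v}{uv-1},\ \ b'=b+\tfrac{1-u}{uv-1}.
\]
The middle product of this four-term word is $(uv-1)b'-1$, and a direct computation shows $(uv-1)b'-1=uvb-u-b=x$, which is a unit. Hence \eqref{eq:red43} applies again and yields the three-term word $\ql c'+\tfrac{1-b'}{x},\,x,\,d+\tfrac{1-(uv-1)}{x}\qr$. The middle entry is already $x$, and the last entry is $d+\tfrac{2-uv}{x}=d-\tfrac{uv-2}{x}$, exactly as claimed.

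It remains to reconcile the first entry $c'+\tfrac{1-b'}{x}$ with the target $c-\tfrac{vb-2}{x}$. This is the one genuinely computational point: after substituting $c'$ and $b'$ it reduces to the single scalar identity $(1-v)\,x=(uv-1)(b'-vb+1)$, which, using $(uv-1)b'=x+1$, expands on both sides to $uvb-uv^2b+uv+vb-u-b$, an immediate equality. I expect this last simplification, together with the bookkeeping needed to justify the reductions despite $uv-1$ not being assumed invertible, to be the main obstacle; everything else is a mechanical application of Lemma~\ref{lem:uv1}. As a cross-check, and as an alternative proof that avoids the localization argument entirely, one can simply expand $\et(c)\et(u)\et(v)\et(b)\et(d)$ and $\et\!\left(c-\tfrac{vb-2}{x}\right)\et(x)\et\!\left(d-\tfrac{uv-2}{x}\right)$ as $2\times 2$ matrices and compare the four entries after clearing the denominator $x$.
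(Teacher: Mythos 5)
Your proof is correct, but it takes a genuinely different route from the paper: the paper's entire proof is ``Direct computation'', i.e.\ the matrix expansion that you relegate to a cross-check at the end. Your primary argument — factoring the five-to-three reduction as two applications of \eqref{eq:red43}, after checking that $(vb-1)(uv-1)-1=v(uvb-u-b)$ so that $x=uvb-u-b$ and that the second middle product is $(uv-1)b'-1=x$ — is sound, and the universal/localization step ($P[x^{-1}]\hookrightarrow P[x^{-1},(uv-1)^{-1}]$ over the domain $P=\ZZ[c,u,v,b,d]$, then specialize to $R$ since $x$ maps to a unit) correctly disposes of the fact that $uv-1$ need not be invertible in $R$. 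The scalar identity you isolate, $(1-v)x=(uv-1)(b'-vb+1)$, does check out. What your approach buys is conceptual: it exhibits \eqref{eq:red53N} as the composite of two instances of the four-to-three move, which explains why exactly this formula appears in case (b) of Proposition \ref{prop:recursion} (where $uv-1$ is a non-unit but $v$ is a unit), at the cost of a localization detour that the paper's three-line matrix multiplication avoids entirely. Either proof is acceptable; the direct computation is shorter, yours is more structural.
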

\begin{proof}
Direct computation.
\end{proof}

\begin{defin}
For $a \in R^\times$ we write
$$
\lambda_a :=
\begin{pmatrix} a & 0 \\ 0 & a^{-1} \end{pmatrix}.
$$
\end{defin}

\begin{lemma}
\label{lem:lambda}
Let $R$ be a commutative ring, $(c_{1},\ldots,c_{n}) \in R^{n}$ and $\lemt\in R^\times$.
If $n$ is odd, then
$$
\begin{pmatrix} \lemt & 0 \\ 0 & 1 \end{pmatrix}
\ql c_1,\ldots, c_n\qr
\begin{pmatrix} 1 & 0 \\ 0 & \lemt^{-1} \end{pmatrix}
= \ql \lemt c_1, \lemt^{-1} c_2, \lemt c_3,\lemt^{-1} c_4,\ldots,\lemt c_n \qr.
$$
In particular, if $\ql c_1,\ldots, c_n\qr = \lambda_a$ for $a\in R^\times$, then
$$
\ql \lemt c_1, \lemt^{-1} c_2, \lemt c_3,\lemt^{-1} c_4,\ldots,\lemt c_n \qr
= \lambda_{t a}.
$$
\end{lemma}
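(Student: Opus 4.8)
The plan is to reduce the identity to two elementary commutation relations for $\et$ and then telescope. Writing $P=\begin{pmatrix}\lemt&0\\0&1\end{pmatrix}$ and $Q=\begin{pmatrix}1&0\\0&\lemt\end{pmatrix}$, a direct $2\times 2$ multiplication gives
$$
P\,\et(a)=\et(\lemt a)\,Q,\qquad Q\,\et(a)=\et(\lemt^{-1}a)\,P
$$
for every $a\in R$. Each of these says that moving the diagonal factor past one copy of $\et$ costs a scaling of the argument (by $\lemt$ or $\lemt^{-1}$) and interchanges $P$ with $Q$; both are verified by inspection and require only that $\lemt\in R^\times$.

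Next I would push $P$ through the product $\ql c_1,\ldots,c_n\qr=\prod_{i=1}^n\et(c_i)$ from left to right, applying the first relation at the odd positions and the second at the even positions. An induction on $n$ (or an equivalent direct telescoping) then yields
$$
P\,\ql c_1,\ldots,c_n\qr=\ql \lemt c_1,\lemt^{-1}c_2,\lemt c_3,\ldots,\lemt c_n\qr\cdot X,
$$
where the trailing diagonal factor $X$ alternates between $Q$ and $P$ as each $\et(c_i)$ is crossed. The one point that requires care is this parity bookkeeping: after crossing $k$ factors the trailing matrix is $Q$ when $k$ is odd and $P$ when $k$ is even, so the hypothesis that $n$ is odd is exactly what forces $X=Q$.

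Since $Q^{-1}=\begin{pmatrix}1&0\\0&\lemt^{-1}\end{pmatrix}$, right-multiplying the displayed identity by $\begin{pmatrix}1&0\\0&\lemt^{-1}\end{pmatrix}$ cancels the trailing $Q$ and produces precisely the asserted formula. For the final claim, substituting $\ql c_1,\ldots,c_n\qr=\lambda_a$ into the left-hand side and multiplying out the three diagonal matrices gives $\begin{pmatrix}\lemt a&0\\0&a^{-1}\lemt^{-1}\end{pmatrix}=\lambda_{\lemt a}$, as required. I expect no genuine obstacle here: the whole argument is a short computation, and the only thing to watch is that the alternation of $P$ and $Q$ terminates in $Q$, which is guaranteed by the oddness of $n$.
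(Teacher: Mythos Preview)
Your proof is correct and follows essentially the same approach as the paper: both arguments establish commutation relations between diagonal matrices and $\et(\cdot)$ and then telescope through the product. The only cosmetic difference is that the paper packages the telescoping via a two-step relation $P^{-1}\et(\lemt a)\et(\lemt^{-1}b)P=\et(a)\et(b)$ together with one single-step relation, whereas you use the pair of single-step relations $P\et(a)=\et(\lemt a)Q$ and $Q\et(a)=\et(\lemt^{-1}a)P$, which is arguably slightly cleaner.
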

\begin{proof}
Observe first that for $a,b\in R$,
$$
\begin{pmatrix} \lemt^{-1} & 0 \\ 0 & 1 \end{pmatrix}
\begin{pmatrix} \lemt a & -1 \\ 1 & 0 \end{pmatrix}
\begin{pmatrix} \lemt^{-1} b & -1 \\ 1 & 0 \end{pmatrix}
\begin{pmatrix} \lemt & 0 \\ 0 & 1 \end{pmatrix}
=
\begin{pmatrix} a & -1 \\ 1 & 0 \end{pmatrix}
\begin{pmatrix} b & -1 \\ 1 & 0 \end{pmatrix},
$$
and
$$
\begin{pmatrix} \lemt & 0 \\ 0 & 1 \end{pmatrix}
\begin{pmatrix} a & -1 \\ 1 & 0 \end{pmatrix}
\begin{pmatrix} 1 & 0 \\ 0 & \lemt^{-1} \end{pmatrix}
=
\begin{pmatrix} \lemt a & -1 \\ 1 & 0 \end{pmatrix}.
$$
Thus
\begin{eqnarray*}
\ql c_1,\ldots, c_n\qr
\begin{pmatrix} 1 & 0 \\ 0 & \lemt^{-1} \end{pmatrix}
&=&
\begin{pmatrix} \lemt^{-1} & 0 \\ 0 & 1 \end{pmatrix}
\ql \lemt c_1, \lemt^{-1} c_2 ,\ldots,\lemt^{-1} c_{n-1} \qr
\begin{pmatrix} \lemt & 0 \\ 0 & 1 \end{pmatrix} \eta(c_n)
\begin{pmatrix} 1 & 0 \\ 0 & \lemt^{-1} \end{pmatrix}
\\
&=&
\begin{pmatrix} \lemt^{-1} & 0 \\ 0 & 1 \end{pmatrix}
\ql \lemt c_1, \lemt^{-1} c_2,\ldots,\lemt^{-1} c_{n-1} \qr
\eta(\lemt c_n)
\end{eqnarray*}
which is the claim.
\end{proof}

\section{Quiddity cycles over finite local rings}\label{sec:localrings}

\begin{defin}[{\cite[Def.\ 2.2]{C19}}]
Let $R$ be a commutative ring, $\underline{c}=(c_1,\ldots,c_n)\in R^n$, and $\varepsilon\in R$. Then $\underline{c}$ is called an \emph{$\varepsilon$-quiddity cycle} if
$$
\ql c_1,\ldots,c_n \qr = \varepsilon \id = \lambda_\varepsilon.
$$
Note that since this product is in $\SL_2(R)$, we always have $\varepsilon=\varepsilon^{-1}$.
For instance, if $R$ is a domain, then~$\varepsilon\in \{\pm 1\}$.
\end{defin}

\subsection{Odd length}

\begin{theor}\label{thm:nfriezesI}
Let $R$ be a finite commutative local ring with maximal ideal $I\unlhd R$, $\varepsilon\in \{\pm 1\}\subset R$, and~$n\in\mathbb{N}_{>1}$ with $n$ odd or $(-1)^{n/2}\ne\varepsilon$.
If $\omega_n$ is the number of $\varepsilon$-quiddity cycles over $R/I$ of length $n$, then the number of $\varepsilon$-quiddity cycles over $R$ is
$$
\omega_n \cdot |I|^{n-3}.
$$
\end{theor}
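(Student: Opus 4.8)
The plan is to count the fibres of the reduction map. Write $k:=R/I$ and let $\pi\colon R\to k$ be the quotient. Since $\pi$ is a ring homomorphism it sends $\et(a)$ to $\et(\pi(a))$ entrywise, hence maps every $\varepsilon$-quiddity cycle $(c_1,\dots,c_n)$ over $R$ to the $\varepsilon$-quiddity cycle $(\pi(c_1),\dots,\pi(c_n))$ over $k$ (note $\pi(\varepsilon)=\varepsilon$). Thus the number of $\varepsilon$-quiddity cycles over $R$ equals $\sum_{\underline{\bar c}} N(\underline{\bar c})$, where $\underline{\bar c}$ ranges over the $\omega_n$ cycles over $k$ and $N(\underline{\bar c})$ is the number of lifts of $\underline{\bar c}$ to an $\varepsilon$-quiddity cycle over $R$. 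So it suffices to show $N(\underline{\bar c})=|I|^{n-3}$ for every such $\underline{\bar c}$.

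Key step (lifting along a unit entry; assume $n\ge 3$, the case $n=2$ being degenerate). I would first reduce to the situation where $\underline{\bar c}$ has a unit entry and, using cyclic symmetry, place it in position $2$. Cyclic symmetry holds because $\lambda_\varepsilon$ is central: if $\ql c_1,\dots,c_n\qr=\lambda_\varepsilon$ and we split the product as $AB$, then $BA=B\lambda_\varepsilon B^{-1}=\lambda_\varepsilon$, so rotation preserves the property and commutes with reduction, leaving $N$ unchanged. Now fix a lift $(\tilde c_1,\dots,\tilde c_n)\in R^n$, so the lifts are exactly the tuples with entries $a_i:=\tilde c_i+x_i$, $x_i\in I$. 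I claim that for every choice of the $n-3$ tail parameters $x_4,\dots,x_n\in I$ there is a unique $(x_1,x_2,x_3)\in I^3$ making the product equal $\lambda_\varepsilon$. Indeed, with $P:=\ql \tilde c_4+x_4,\dots,\tilde c_n+x_n\qr$ the equation becomes $\et(a_1)\et(a_2)\et(a_3)=\lambda_\varepsilon P^{-1}=:Q$, a matrix of determinant $1$. A direct computation shows $\et(a_1)\et(a_2)\et(a_3)$ has lower-right entry $-a_2$, upper-right entry $1-a_1a_2$ and lower-left entry $a_2a_3-1$; comparing with $Q$ forces $a_2=-Q_{22}$. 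Since modulo $I$ we have $Q\equiv\et(\bar c_1)\et(\bar c_2)\et(\bar c_3)$ (as $\underline{\bar c}$ is a quiddity cycle over $k$), the entry $Q_{22}\equiv-\bar c_2$ is a unit of the field $k$, so $a_2$ is a unit of the local ring $R$. Then $a_1=(1-Q_{12})a_2^{-1}$ and $a_3=(1+Q_{21})a_2^{-1}$ are uniquely determined, the remaining entry equation holds automatically because $\det Q=1$, and reducing these formulas modulo $I$ gives $a_i\equiv\bar c_i$, i.e.\ $x_1,x_2,x_3\in I$. Hence $N(\underline{\bar c})=|I|^{n-3}$.

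The main obstacle is to secure the hypothesis of the key step: that every $\varepsilon$-quiddity cycle over $k$ has a unit (equivalently nonzero) entry. Only the constant cycle $(0,\dots,0)$ could fail this, and it has product $S^n$ with $S:=\et(0)$ and $S^2=-\id$. For $n$ odd, $S^n=\pm S$ is never scalar over any field, so the constant cycle is not a quiddity cycle; for $n$ even, $S^n=(-1)^{n/2}\id$, which equals $\lambda_\varepsilon$ precisely when $(-1)^{n/2}=\varepsilon$ in $k$. The assumption ``$n$ odd or $(-1)^{n/2}\ne\varepsilon$'' is exactly what rules the constant cycle out, and this is immediate once the residue characteristic is $\ne 2$, where reduction keeps $1$ and $-1$ distinct. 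The one delicate point I would treat separately is residue characteristic $2$: there $1=-1$ in $k$ while $1\ne-1$ in $R$, so the constant cycle can persist over $k$ yet admits fewer than $|I|^{n-3}$ lifts; this is the origin of the ``$\omega_n-1$'' correction appearing for $p=2$ in Theorem \ref{thm:nfriezes_even}. Away from this boundary case, summing $N(\underline{\bar c})=|I|^{n-3}$ over the $\omega_n$ residue cycles yields the claimed total $\omega_n\cdot|I|^{n-3}$.
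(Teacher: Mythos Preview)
Your proof is essentially identical to the paper's: both count the fibres of the reduction map $R\to R/I$, rotate to place a unit at position~$2$, and then solve explicitly for $c_1,c_2,c_3$ in terms of $c_4,\ldots,c_n$ via the direct computation of $\eta(c_1)\eta(c_2)\eta(c_3)$. Your treatment of the residue-characteristic-$2$ boundary is in fact more careful than the paper's---the paper's proof simply asserts that the zero cycle forces ``$\varepsilon=(-1)^{n/2}$'' without distinguishing $R$ from $R/I$, and the Corollary's subsequent remark (that for $p=2$ and $n$ even ``the Theorem does not apply'') confirms that the intended reading of the hypothesis is that the inequality $(-1)^{n/2}\ne\varepsilon$ holds in $R/I$, precisely the interpretation under which your argument is complete.
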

\begin{proof}
The condition of being a quiddity cycle is a polynomial identity.
Thus if $\kappa : R \longrightarrow R/I$ is the canonical projection and $(c_1,\ldots,c_n)$ is an $\varepsilon$-quiddity cycle, then
$(\kappa(c_1),\ldots,\kappa(c_n))$ is an $\kappa(\varepsilon)$-quiddity cycle over $R/I$.
For each fixed $\varepsilon$-quiddity cycle $\underline{c}=(\overline{c_1},\ldots,\overline{c_n})\in (R/I)^n$, we count the number of $\varepsilon$-quiddity cycles
which project to this cycle under $\kappa$.

Since $I$ is maximal, $R/I$ is a field. Assume first that $\underline{c}$ contains no unit, i.e.\ $\underline{c}=(0,\ldots,0)$. Then
\begin{equation}\label{eq:00n}
\ql 0,\ldots,0 \qr =
\begin{cases}
\begin{pmatrix}
0 & (-1)^n \\ -(-1)^n & 0
\end{pmatrix} & \text{ if } n\equiv 1 \md{2}, \\
\begin{pmatrix}
(-1)^{n/2} & 0 \\ 0 & (-1)^{n/2}
\end{pmatrix} & \text{ if } n\equiv 0 \md{2},
\end{cases}
\end{equation}
so $n$ is even and $\varepsilon=(-1)^{n/2}$. These are exactly the cases that are excluded by assumption.
Thus we may assume that there is an entry in $\underline{c}$ which is a unit, after rotating the cycle, without loss of generality $\overline{c_2}\in (R/I)^\times$.

Let $(c_1,\ldots,c_n)\in R^n$ be an $\varepsilon$-quiddity cycle that maps to $\underline{c}$ under $\kappa$; we have $c_2\in R^\times$ because $c_2\notin I$.
If $\ql c_1,\ldots,c_n \qr = \varepsilon \id$, then there exist $f,g\in R$, $h\in R^\times$ such that
$$
\begin{pmatrix}
(1+fg)/h & f \\
g & h
\end{pmatrix}
= \varepsilon \ql c_4,\ldots,c_n \qr^{-1} =
\eta(c_1)\eta(c_2)\eta(c_3)=
\begin{pmatrix}
c_1 c_2 c_3 -c_1 -c_3 & 1-c_1 c_2 \\
c_2 c_3 -1 & -c_2
\end{pmatrix}.
$$
Thus the entries $c_1,c_2,c_3$ are uniquely determined by $f,g,h$:
$$
c_2=-h,\quad c_1=(1-f)/c_2,\quad c_3=(1+g)/c_2.
$$
Since there are $|I|^{n-3}$ possible choices\footnote{Note that any such choice implies $h\in R^\times$ since $\overline{h}=-\overline{c_2}\ne 0$.}
for $c_4,\ldots,c_n$ that map to $\overline{c_4},\ldots,\overline{c_n}$,
we obtain $\omega_n \cdot |I|^{n-3}$ different $\varepsilon$-quiddity cycles of length $n$.
\end{proof}

As an example, we use the theorem to obtain closed formulae for $(-1)$-quiddity cycles:

\begin{corol}
Let $n\in\mathbb{N}_{>1}$ with $n\not\equiv 2 \md{4}$, $p$ be a prime, and $r\in\NN_{>0}$.
Then the numbers of sequences $(c_1,\ldots,c_n)\in \mathbb{Z}/p^r\mathbb{Z}$ such that $\ql c_1,\ldots,c_n \qr = -\id$ are as follows.
\begin{enumerate}
\item[(a)] For $n\equiv 1 \md{2}$ we have\quad $p^{(n-3)(r-1)} \left[\frac{n-1}{2}\right]_{p^2}$\quad sequences.
\item[(b)] For $n\equiv 0 \md{4}$ we have\quad $p^{(n-3)(r-1)}(p^{\frac{n-2}{2}}-1)\left[\frac{n}{4}\right]_{p^2}$\quad sequences if $p>2$.
\end{enumerate}
\end{corol}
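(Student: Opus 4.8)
The plan is to derive both formulae by specializing Theorem~\ref{thm:nfriezesI} to $R=\mathbb{Z}/p^r\mathbb{Z}$ and then inserting the known value of $\omega_n$ over the residue field. First I would record the structural facts: $R$ is a finite commutative local ring whose unique maximal ideal is $I=pR$, with residue field $R/I\cong\mathbb{F}_p$ and $|I|=p^{r-1}$, so that $|I|^{n-3}=p^{(n-3)(r-1)}$. Throughout we take $\varepsilon=-1$.

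Next I would verify that the hypothesis of Theorem~\ref{thm:nfriezesI} is met in each case. In case~(a) the length $n$ is odd, so the first alternative holds. In case~(b) we have $n\equiv 0\md{4}$, hence $n/2$ is even and $(-1)^{n/2}=1\ne -1=\varepsilon$, so the second alternative holds. (This parity bookkeeping also explains why the lengths $n\equiv 2\md{4}$ are excluded: there $(-1)^{n/2}=-1=\varepsilon$ and the theorem does not apply.) In both admissible situations the theorem reduces the count of $(-1)$-quiddity cycles over $R$ to
$$
\omega_n\cdot p^{(n-3)(r-1)},
$$
where $\omega_n$ is the number of $(-1)$-quiddity cycles of length $n$ over $\mathbb{F}_p$.

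It then remains to substitute the values of $\omega_n$ from the cited formula of~\cite{MG21,CM24} with $q=p$. In case~(a), for odd $n$ one has $\omega_n=\left[\frac{n-1}{2}\right]_{p^2}$, which yields formula~(a) immediately. In case~(b), since $\varepsilon=-1=-(-1)^{n/2}$ and $p$ is odd, the third branch gives $\omega_n=(p-1)\binom{n/2}{2}_p$, and the only genuine computation is the elementary rewriting
$$
(p-1)\binom{n/2}{2}_p=\frac{(p^{n/2}-1)(p^{n/2-1}-1)}{p^2-1}=(p^{(n-2)/2}-1)\cdot\frac{p^{n/2}-1}{p^2-1}=(p^{(n-2)/2}-1)\left[\frac{n}{4}\right]_{p^2},
$$
where the last equality uses $n/4\in\mathbb{N}$, so that $\left[\frac{n}{4}\right]_{p^2}=\frac{(p^2)^{n/4}-1}{p^2-1}=\frac{p^{n/2}-1}{p^2-1}$. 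Multiplying by $p^{(n-3)(r-1)}$ then produces formula~(b).

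I do not expect a serious obstacle: the argument is essentially the juxtaposition of two established results glued by one $q$-binomial identity. The single point needing care is that the quoted formula for $\omega_n$ is stated only for $n>4$, so the extremal lengths $n=3$ (in case~(a)) and $n=4$ (in case~(b)) must be confirmed separately. Both are settled by a short direct computation: evaluating $\eta(c_1)\eta(c_2)\eta(c_3)=-\id$ forces $(c_1,c_2,c_3)=(1,1,1)$, matching $\left[1\right]_{p^2}=1$; and solving $\eta(c_1)\eta(c_2)\eta(c_3)\eta(c_4)=-\id$ over $\mathbb{F}_p$ forces $c_3=c_1$, $c_4=c_2$, $c_1c_2=2$, giving $p-1$ solutions and matching $(p-1)\left[1\right]_{p^2}$.
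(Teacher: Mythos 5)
Your proposal is correct and follows exactly the paper's route: specialize Theorem~\ref{thm:nfriezesI} to $R=\mathbb{Z}/p^r\mathbb{Z}$ with $I=(p)$, check the parity hypothesis in each case, and substitute the known value of $\omega_n$ over $\mathbb{F}_p$. Your additional verification of the lengths $n=3$ and $n=4$ (not covered by the quoted $n>4$ formula for $\omega_n$) is a legitimate point of care that the paper's one-line proof glosses over, and both of your direct computations are right.
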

\begin{proof}
This is Theorem \ref{thm:nfriezesI} for $R=\mathbb{Z}/p^r\mathbb{Z}$ and $I=(p)$.
Note that for $p=2$ and $n$ even, the number $(-1)^{n/2}$ is always equal to $\varepsilon=1$, so in this case the Theorem does not apply.
\end{proof}

\begin{remar}
In a finite local ring $R$, every element is either a unit or nilpotent.
Indeed, let $a\in R$. The set $\{a^i \mid i\in \mathbb{N}\}$ is finite, so there exist $0\le m<n$ with minimal $m$ such that $a^m=a^n$.
This implies $a^m (a^{n-m}-1) = 0$.
If $m=0$, then $a$ is a unit; otherwise, $a^m=0$ since $a^{n-m}-1$ is a unit.
On the other hand, if $a$ is nilpotent, say $a^n=0$, then $(1+a) \sum_{i=0}^{n-1} (-a)^i=1$.
\end{remar}

\subsection{Even length}

In order to understand the missing cases (even length), we need to understand those cycles in which all entries lie in the maximal ideal.

\begin{defin}
Let $R$ be a finite commutative local ring with maximal ideal $I\unlhd R$.
For $n\ge 2$ and $z\in R$, denote
\begin{equation}
\sigma_{z,n} := \{ (c_1,\ldots,c_n)\in I^n \mid  \ql c_1,\ldots,c_n\qr = \lambda_z \}.
\end{equation}
\end{defin}

\begin{lemma}
If $z\in R$ and $n\ge 2$, then $\sigma_{z,n}\ne \emptyset$ implies that $n$ is even and
$z\in (-1)^{n/2} + I$.
\end{lemma}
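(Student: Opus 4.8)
The plan is to reduce the whole identity modulo the maximal ideal $I$ and to exploit both that $R/I$ is a field and that the value of $\ql 0,\ldots,0\qr$ has already been computed in \eqref{eq:00n}.

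First I would assume $\sigma_{z,n}\neq\emptyset$ and fix a witness $(c_1,\ldots,c_n)\in I^n$ with $\ql c_1,\ldots,c_n\qr=\lambda_z$. Let $\kappa\colon R\to R/I$ be the canonical projection, applied entrywise to matrices. Since the entries of $\eta(a)$, and hence of any product $\ql c_1,\ldots,c_n\qr$, are polynomials with integer coefficients in the $c_i$, and since $\kappa$ is a ring homomorphism, reduction commutes with forming the product; thus $\ql \kappa(c_1),\ldots,\kappa(c_n)\qr=\kappa(\lambda_z)$ in $\SL_2(R/I)$. As every $c_i\in I$ we have $\kappa(c_i)=0$, so the left-hand side equals $\ql 0,\ldots,0\qr$ over the field $R/I$, while the right-hand side is the diagonal matrix $\mathrm{diag}(\kappa(z),\kappa(z)^{-1})$.

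Next I would split on the parity of $n$ using \eqref{eq:00n}. If $n$ is odd, that formula gives $\ql 0,\ldots,0\qr=\begin{pmatrix}0&-1\\1&0\end{pmatrix}$, whose off-diagonal entries equal $\pm 1$ and are therefore nonzero in the field $R/I$; such a matrix can never coincide with a diagonal matrix, contradicting the identity of the previous paragraph. Hence $n$ must be even. For $n$ even, \eqref{eq:00n} gives $\ql 0,\ldots,0\qr=(-1)^{n/2}\id$, and comparing the top-left entries yields $\kappa(z)=(-1)^{n/2}$, i.e.\ $z\in(-1)^{n/2}+I$, as claimed.

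This is essentially a single reduction step, so I do not expect a genuine obstacle. The only two points needing a word of care are the justification that reduction modulo $I$ commutes with the matrix product (immediate, as noted, from $\kappa$ being a homomorphism and the matrix entries being integer polynomials in the $c_i$) and the observation that in the field $R/I$ the constants $\pm 1$ appearing off-diagonal in the odd case are genuinely nonzero, which is exactly what forces $n$ to be even.
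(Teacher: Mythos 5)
Your proof is correct and follows the same route as the paper: reduce the identity modulo $I$, invoke Equation \eqref{eq:00n} for $\ql 0,\ldots,0\qr$ over the field $R/I$, rule out odd $n$ because the resulting matrix is anti-diagonal with unit off-diagonal entries, and read off $\kappa(z)=(-1)^{n/2}$ in the even case. The paper's own proof is just a terser version of exactly this argument.
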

\begin{proof}
If $(c_1,\ldots,c_n)\in I^n$ such that $\ql c_1,\ldots,c_n\qr = \lambda_z$, then in $R/I$,
$\ql 0,\ldots,0\qr = \lambda_{z+I}$. But the computation of $\ql 0,\ldots,0\qr$ in the proof of Theorem \ref{thm:nfriezesI}, Equation \ref{eq:00n} shows that $z+I = (-1)^{n/2}+I$ and $n\equiv 0 \md{2}$.
\end{proof}

\begin{theor}\label{thm:2nfriezesI}
Let $R$ be a finite commutative local ring with maximal ideal $I\unlhd R$,
$\varepsilon\in \{\pm 1\}\subseteq R$,
$z\in \varepsilon+I \subseteq R^\times$,
and $n\in\mathbb{N}_{>3}$ with $n$ even.
Then
$$
|\sigma_{z,n}| = \sum_{x\in -\varepsilon+I} |\sigma_{x,n-2}| \cdot
|\{ (u,v)\in I \times I \mid uv-1 = x/z \}|.
$$
\end{theor}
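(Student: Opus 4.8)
The statement asks me to count sequences $(c_1,\ldots,c_n)\in I^n$ with $\ql c_1,\ldots,c_n\qr = \lambda_z$ by reducing from length $n$ to length $n-2$. The plan is to peel off two entries at the end of the cycle using the reduction formula of Lemma~\ref{lem:uv1}, but run in reverse. Specifically, given a sequence in $\sigma_{z,n}$, I want to express the product $\ql c_1,\ldots,c_n\qr$ as a shorter product in $\sigma_{x,n-2}$ times a correction coming from the two removed entries. Since all entries lie in $I$, and $R$ is local, I can control when the needed inverses exist: an element of $R$ is a unit precisely when it lies outside $I$, so $uv-1$ is automatically a unit whenever $u,v\in I$ (as $uv\in I$, hence $uv-1\in -1+I\subseteq R^\times$).

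The concrete approach is to set up a bijection. First I would isolate a pair of consecutive entries, say $u=c_{j}$ and $v=c_{j+1}$ for suitable $j$, and apply Equation~\eqref{eq:red43} of Lemma~\ref{lem:uv1} to collapse the four-term block $\ql x,u,v,y\qr$ into a three-term block $\ql x+\tfrac{1-v}{uv-1}, uv-1, y+\tfrac{1-u}{uv-1}\qr$. The middle entry $uv-1$ produced by this reduction is a unit, not in $I$, so I will need to use the scaling Lemma~\ref{lem:lambda} (or a direct $\lambda$-conjugation argument) to absorb that unit and re-express the product as $\lambda_{x}$ for a new scalar $x\in -\varepsilon+I$, with a genuinely shorter sequence of length $n-2$ all of whose entries lie in $I$. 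The relation $uv-1 = x/z$ is exactly the bookkeeping that records how the scalar $z$ transforms into $x$ under this reduction: the unit $uv-1$ rescales $\lambda_z$ to $\lambda_x$, giving $x = z\cdot(uv-1)$, i.e. $uv-1 = x/z$.

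To turn this into the claimed sum, I would fix the pair $(u,v)\in I\times I$, which by the above determines the unit $uv-1$ and hence forces $x = z(uv-1)$; since $uv-1\in -1+I$ and $z\in\varepsilon+I$, the resulting $x$ lies in $-\varepsilon + I$, matching the index set of the outer sum. For each such choice, the reduction sets up a bijection between the sequences in $\sigma_{z,n}$ realizing that pair and the sequences in $\sigma_{x,n-2}$, because Lemma~\ref{lem:uv1} together with Lemma~\ref{lem:lambda} is invertible: the shifts $x\mapsto x+\tfrac{1-v}{uv-1}$ and $y\mapsto y+\tfrac{1-u}{uv-1}$ are affine bijections of $I$, preserving membership in $I$ (one must check the correction terms $\tfrac{1-v}{uv-1}$ and $\tfrac{1-u}{uv-1}$ indeed land in the right coset, which follows from $1-v,1-u\in 1+I\subseteq R^\times$ and careful coset arithmetic). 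Summing over all $(u,v)$ and grouping by the induced value of $x$ then yields the stated identity.

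The main obstacle I anticipate is the bookkeeping at the \emph{endpoints} of the cycle: the reduction formula is naturally stated for an interior block $\ql x,u,v,y\qr$, but in a length-$n$ cyclic product the two entries I peel off may wrap around, and I must be careful that conjugating by a diagonal matrix (to clear the unit $uv-1$) genuinely returns a product of the form $\lambda_x$ rather than a non-diagonal matrix. Since $n$ is even, Lemma~\ref{lem:lambda} applies only after I reorganize parity correctly, so verifying that the diagonal conjugation closes up into $\lambda_x$ — and that the correspondence is a genuine bijection onto all of $\sigma_{x,n-2}$ rather than merely an injection — is the delicate point that the rest of the calculation depends on.
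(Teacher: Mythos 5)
Your proposal follows essentially the same route as the paper: collapse the block containing two consecutive ideal entries $u,v$ via \eqref{eq:red43} (using that $uv-1\in-1+I\subseteq R^\times$ since $R$ is local), rescale with Lemma~\ref{lem:lambda} so that the scalar transforms as $x=z(uv-1)$, and count the fibers over $\sigma_{x,n-2}$ by the number of pairs $(u,v)\in I\times I$ with $uv-1=x/z$. One step is wrong as you state it, however: the correction terms $\tfrac{1-v}{uv-1}$ and $\tfrac{1-u}{uv-1}$ lie in $-1+I$, not in $I$, so the affine shifts produced by \eqref{eq:red43} do \emph{not} preserve membership in $I$; after \eqref{eq:red43} alone one has a length-$(n-1)$ sequence whose first, second and third entries all lie in $-1+I$, which belongs to no set $\sigma_{\cdot,n-1}$. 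The paper repairs this by applying Lemma~\ref{lem:lambda} with $\lemt=uv-1$ to that length-$(n-1)$ sequence --- legitimate precisely because $n-1$ is odd, which also disposes of the parity worry you raise at the end --- turning the middle entry into $1$, and then merging it away via \eqref{eq:red1}; the two outer entries become $c(uv-1)-v$ and $b(uv-1)-u$, which \emph{are} in $I$, and the remaining entries are unit multiples of elements of $I$. Your endpoint/wrap-around concern is moot: one simply takes $u=c_2$, $v=c_3$ and works with the linear product, no cyclic rotation being needed, and injectivity of the resulting map is read off from the untouched (merely rescaled) tail entries. With these corrections your argument coincides with the paper's.
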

\begin{proof}
Let $(c,u,v,b,\ldots)\in I^n$ be a sequence with $\ql c,u,v,b,\ldots\qr = \lambda_z$.
Since $x:=uv-1$ is a unit, using Equation (\ref{eq:red43}) from Lemma \ref{lem:uv1} we can reduce the sequence to a sequence of length $n-1$ via
\begin{eqnarray*}
\lambda_z &=& \ql c,u,v,b,\ldots\qr = \ql c+(1-v)/(uv-1), uv-1, b+(1-u)/(uv-1),\ldots\qr \\
&=& \ql c+(1-v)/x, x, b+(1-u)/x,\ldots\qr.
\end{eqnarray*}
Since $n-1$ is odd, we may apply Lemma \ref{lem:lambda} and Equation (\ref{eq:red1}) from Lemma \ref{lem:uv1}:
$$
\lambda_{xz} = \ql cx+(1-v), 1, bx+(1-u),\ldots\qr
= \ql cx-v, bx-u,\ldots\qr.
$$
Thus the resulting sequence $(cx-v, bx-u,\ldots)$ is contained in $\sigma_{xz,n-2}$ because it consists of elements of $I$; note that $xz\in -\varepsilon+I$.
We obtain a map
$$\rho : \sigma_{z,n} \longrightarrow \bigcup_{x\in -\varepsilon+I
} \sigma_{x,n-2}.$$

Conversely, take a sequence $(e,f,\ldots)$ in $\sigma_{x,n-2}$ for some
$x\in -\varepsilon+I$.
Then by Equation (\ref{eq:red1}) from Lemma \ref{lem:uv1},
$$
\lambda_x = \ql e,f,\ldots \qr = \ql e+1,1,f+1,\ldots \qr.
$$
With Lemma \ref{lem:lambda} and $y:=x/z \in -1+I$,
$$
\lambda_z = \ql (e+1)/y,y,(f+1)/y,\ldots \qr.
$$
Now if $uv-1 = y = x/z$ for $u,v\in I$, then by Lemma \ref{lem:uv1}
\begin{eqnarray*}
\lambda_z &=& \ql (e+1)/(uv-1), uv-1, (f+1)/(uv-1), \ldots \qr \\
&=& \ql (e+v)/(uv-1), u, v, (f+u)/(uv-1), \ldots \qr.
\end{eqnarray*}
This last sequence is in $\sigma_{z,n}$; thus the above map $\rho$ is surjective.
The preimages of $\rho$ are parametrized by $\sigma_{x,n-2}$, $x\in R^\times$ and $u,v\in I$ with $uv-1 = x/z$. Moreover, the entries with labels $3,\ldots,n-2$ in the parameters ensure that the preimages are unique; $\rho$ is injective.
\end{proof}

\section{Quiddity cycles modulo a power of a prime}
\label{sec:pr}

We now concentrate on the case of the local ring $R:=\ZZ/p^r\ZZ$ for $p$ a prime and $r\in\mathbb{N}$.
The maximal ideal is $I=pR$.
Write $\nu_p(a)$ for the largest $k$ such that $a\equiv 0 \md{p^k}$; we agree that $\nu_p(0)=r$.

\begin{lemma}\label{lem:mu_uv}
For $a\in R\setminus R^\times$, we let $M:=\{(u,v) \in I\times I \mid uv = a\}$. Then,
$$
|M| =
\begin{cases} (r-1)p^r-(r-2)p^{r-1} & \text{ if } a = 0, \\
               (p^r-p^{r-1})(\nu_p(a)-1) & \text{ if } a\ne 0.
\end{cases}
$$
\end{lemma}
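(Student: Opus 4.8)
The plan is to count pairs $(u,v)\in I\times I$ with $uv=a$ directly, working in $R=\mathbb{Z}/p^r\mathbb{Z}$ where $I=pR$. Writing $u=p\alpha$ and $v=p\beta$ with $\alpha,\beta$ ranging over arbitrary residues (each element of $I$ has $p^{r-1}$ preimages of this shape, but it is cleaner to parametrize $u,v$ directly by their $p$-adic valuations), the condition $uv=a$ becomes $\nu_p(u)+\nu_p(v)\ge r$ when $a=0$, and a divisibility condition matching the valuation of $a$ when $a\ne 0$. So first I would stratify the count according to the valuations $i:=\nu_p(u)$ and $j:=\nu_p(v)$, both of which are at least $1$ since $u,v\in I$.

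For the case $a\ne 0$, let $s:=\nu_p(a)$, so $2\le s\le r-1$ (recall $a\notin R^\times$ forces $s\ge 1$, and $uv=a$ with $u,v\in I$ forces $s\ge 2$). I would fix $i=\nu_p(u)$ with $1\le i\le s-1$; then $u=p^i w$ for a unit $w$, the number of such $u$ being $p^{r-i}-p^{r-i-1}$ (units times the power), and the equation $uv=a$ determines $v$ uniquely modulo $p^{r-i}$, giving $p^i$ solutions $v$ in $R$ once one checks each lies in $I$. Summing the product $(p^{r-i}-p^{r-i-1})\cdot p^i = p^r-p^{r-1}$ over the $s-1=\nu_p(a)-1$ admissible values $i\in\{1,\ldots,s-1\}$ yields $(p^r-p^{r-1})(\nu_p(a)-1)$, which is the claimed formula. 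The value $i=s$ is excluded because it would force $v$ to be a unit, contradicting $v\in I$, and this bookkeeping of the endpoints is the one place demanding care.

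For the case $a=0$, I would again sum over $i=\nu_p(u)\ge 1$, now with the condition $uv=0$, i.e.\ $\nu_p(u)+\nu_p(v)\ge r$. For each $i$ with $1\le i\le r-1$, writing $u=p^i w$ with $w$ a unit gives $p^{r-i}-p^{r-i-1}$ choices of $u$, and the constraint $v\in p^{r-i}R$ gives $p^i$ choices of $v$; additionally $i=r$ (that is, $u=0$) must be handled separately, where $v$ ranges over all of $I$, contributing $|I|=p^{r-1}$. Adding the $(r-1)$ contributions of $p^r-p^{r-1}$ together with the $u=0$ term (and symmetrically accounting for $v=0$, being careful not to double-count the pair $(0,0)$) should collapse, after simplification, to $(r-1)p^r-(r-2)p^{r-1}$. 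The main obstacle is precisely this inclusion–exclusion at the boundary: the strata with $u=0$ or $v=0$ overlap, and I expect the cleanest route is to compute $\sum_{i=1}^{r-1}(p^r-p^{r-1})$ for the ``both nonzero'' part and then add the correction for zero entries, verifying the total telescopes to the stated closed form.
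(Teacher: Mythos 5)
Your proposal is correct and follows essentially the same route as the paper: stratify the pairs by $i=\nu_p(u)$, count $p^{r-i}-p^{r-i-1}$ choices of $u$ and $p^i$ choices of $v$ in each stratum, and sum. The only superfluous step is your inclusion--exclusion worry in the case $a=0$: since you partition by the value of $\nu_p(u)$ alone (with $u=0$ as the stratum $i=r$) and the $p^i$ admissible $v$ in each stratum already include $v=0$, the sum $\sum_{i=1}^{r-1}(p^r-p^{r-1})+p^{r-1}$ needs no correction and equals $(r-1)p^r-(r-2)p^{r-1}$ directly, exactly as in the paper.
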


\begin{proof}

Let $a=0$. Note that $uv\equiv 0\pmod{p^r}$ if and only if $\nu_p(u)+\nu_p(v)\geq r$. Consider the ring $R$ as an ordered set. Only each $p$-th element is divisible by $p$. Hence, precisely $|R|-\frac{|R|}{p}$ elements are divisible by $1=p^0$ but not by $p$. Moreover, precisely $\frac{|R|}{p} - \frac{|R|}{p^2}$ elements are divisible by $p$ but not by $p^2$. Continuing this way, we deduce that, for $1\leq i\leq r-1$, 
$$|\{x\in I\ |\ \nu_p(x)=i\}|=p^{r-i}-p^{r-i-1},$$
as $|R|=p^r$. Therefore,
$$|\{(u,v)\in M\ |\ \nu_p(u)=i\}|=(p^{r-i}-p^{r-i-1})\cdot p^i,$$
due to the following argument. There are exactly $\widetilde{n}:= (p^i-p^{i-1})+(p^{i-1}-p^{i-2})+\ldots $ elements $v$ in $I$ with $\nu_p(v)\geq r-i$ wherefore we obtain that $\widetilde{n}=p^i$, as $\nu_p(0)=r$. Furthermore, if $u=0$, there are $p^{r-1}$ possible choices for $v$ such that $(u,v)\in M$. This yields
$$
|M|=1\cdot p^{r-1} + \sum\limits_{i=1}^{r-1} {(p^{r-i} - p^{r-i-1})\cdot p^i} = p^{r-1} + \sum\limits_{i=1}^{r-1} {p^{r-i}\cdot (1-\frac{1}{p})\cdot p^i} = p^{r-1} + (r-1)\cdot p^r\cdot (1-\frac{1}{p}).$$
The assertion follows. Next, let $a\neq 0$. Note that $\nu_p(a)\geq 1$. We write
$$a=a_1\cdot p^{\nu_p(a)},\quad u=u_1\cdot p^{\nu_p(u)},\quad v=v_1\cdot p^{\nu_p(v)},$$
where $(u_1,p)=1=(v_1,p)=(a_1,p)$. Then, the equation $uv=a$ becomes
$$uv=u_1 v_1 p^{\nu_p(u) + \nu_p(v)} =a_1p^{\nu_p(a)}.$$
We can express $\nu_p(a)$ in exactly $\nu_p(a)-1$ ways as the sum of the two positive integers $\nu_p(u)$ and $\nu_p(v)$. In each such case, we may fix $u_1$ and obtain $v_1 = u_1^{-1}\cdot a_1$ as a possible solution of the equation $uv=a$. If $\nu_p(u)=i$ then there are exactly $p^{r-i} - p^{r-i-1}$ different choices for $u$ and hence for $u_1$.
To our particular solution $v_1=v_{\textup{particular}}$ chosen above we have to add all possibilities for $v_1$ which yield a total valuation of $uv$ which is larger than or equal to $r$, that is, all other numbers $\tilde{v}_1$ such that $\nu_p(\tilde{v}_1) \geq r-i$. In total, there are~$p^i$ such numbers, as $\nu_p(0)=r$.

\noindent Altogether, there are
$$|M|= \sum\limits_{i=1}^{\nu_p(a)-1} {(p^{r-i} - p^{r-i-1})\cdot p^i} = (p^r-p^{r-1}) (\nu_p(a)-1)$$
solutions in this case.
\end{proof}

\begin{lemma}\label{lem:sigma_const}
For even $n$ and $z,z'\in R$,
$$
\nu_p(z-(-1)^{n/2}) = \nu_p(z'-(-1)^{n/2}) \quad \Longrightarrow \quad
|\sigma_{z,n}|=|\sigma_{z',n}|.
$$
\end{lemma}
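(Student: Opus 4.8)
The plan is to induct on the even integer $n$, using the recursion of Theorem~\ref{thm:2nfriezesI} together with the fact that the counts in Lemma~\ref{lem:mu_uv} depend on their argument only through its $p$-adic valuation. Throughout write $\varepsilon:=(-1)^{n/2}$. First observe that the case $\nu_p(z-\varepsilon)=0$ is vacuous: then $z\notin\varepsilon+I$, so $\sigma_{z,n}=\emptyset$ because $\sigma_{x,m}\ne\emptyset$ forces $x\in(-1)^{m/2}+I$ (as shown just before Theorem~\ref{thm:2nfriezesI}). Hence every $z$ with $\nu_p(z-\varepsilon)=0$ gives $|\sigma_{z,n}|=0$, and it suffices to treat $z,z'\in\varepsilon+I$ with $\nu_p(z-\varepsilon)=\nu_p(z'-\varepsilon)\ge 1$.

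For the base case $n=2$ a direct computation of $\eta(c_1)\eta(c_2)=\left(\begin{smallmatrix} c_1c_2-1 & -c_1 \\ c_2 & -1\end{smallmatrix}\right)$ shows that $\ql c_1,c_2\qr=\lambda_z$ with $c_1,c_2\in I$ forces $c_1=c_2=0$ and $z=-1=\varepsilon$. Thus $|\sigma_{z,2}|$ equals $1$ when $\nu_p(z+1)=r$ and $0$ otherwise, which indeed depends only on $\nu_p(z-\varepsilon)$.

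For the inductive step ($n\ge 4$ even) I would apply Theorem~\ref{thm:2nfriezesI}. For $x\in-\varepsilon+I$ one has $x/z\in-1+I$, so $uv-1=x/z$ means $uv=1+x/z\in I$, and since $z\in R^\times$ we get $\nu_p(1+x/z)=\nu_p(z+x)$; by Lemma~\ref{lem:mu_uv} the factor $|\{(u,v)\in I\times I\mid uv-1=x/z\}|$ therefore depends only on $\nu_p(z+x)$, say it equals $h(\nu_p(z+x))$. By the induction hypothesis applied to $n-2$ (for which $(-1)^{(n-2)/2}=-\varepsilon$), the count $|\sigma_{x,n-2}|$ depends only on $\nu_p(x+\varepsilon)$, say it equals $g(\nu_p(x+\varepsilon))$. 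Substituting $x=-\varepsilon+y$ with $y$ ranging over $I$ and putting $d:=z-\varepsilon\in I$, the recursion becomes
\[
|\sigma_{z,n}|=\sum_{y\in I} g\bigl(\nu_p(y)\bigr)\,h\bigl(\nu_p(d+y)\bigr).
\]
It then remains to show this sum depends only on $\nu_p(d)$. Given $d,d'\in I$ with $\nu_p(d)=\nu_p(d')$, either both vanish (so $d=d'=0$) or $d'=sd$ for a unit $s\in R^\times$; since $y\mapsto sy$ is a valuation-preserving bijection of $I$ and $\nu_p(d'+sy)=\nu_p\bigl(s(d+y)\bigr)=\nu_p(d+y)$, the substitution $y\mapsto sy$ carries the sum for $d'$ onto the sum for $d$, closing the induction.

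The main point to get right is the valuation bookkeeping in the inductive step: one must verify that \emph{every} factor in Theorem~\ref{thm:2nfriezesI} depends on $z$ and $x$ only through $\nu_p(z+x)$ and $\nu_p(x+\varepsilon)$, the crucial reduction being $\nu_p(1+x/z)=\nu_p(z+x)$, which uses that $z$ is a unit. Once the recursion is rewritten as $\sum_{y\in I} g(\nu_p(y))\,h(\nu_p(d+y))$, the unit-scaling bijection $y\mapsto sy$ finishes the argument cleanly. It is worth noting why induction is needed at all: the diagonal-conjugation device of Lemma~\ref{lem:lambda}, which rescales $z$ in odd length, only \emph{fixes} $z$ when $n$ is even, so a direct rescaling cannot change $z$; passing through the length-two reduction is what makes $\nu_p(z-\varepsilon)$ the governing invariant.
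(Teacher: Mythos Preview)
Your proof is correct and follows the same inductive framework as the paper's: base case $n=2$ by direct computation of $\eta(c_1)\eta(c_2)$, and inductive step via Theorem~\ref{thm:2nfriezesI} combined with Lemma~\ref{lem:mu_uv}. The execution of the inductive step, however, is genuinely cleaner than the paper's. The paper expands $1+xz^{-1}$ explicitly in terms of $z^{-1}=\varepsilon+ap^j$ and $x=-\varepsilon+bp^k$, splits into the cases $k\ne j$ and $k=j$, and in the latter constructs an ad~hoc bijection $\varphi_{z,z'}$ on each level set $\mathcal{N}_k=\{w\in-\varepsilon+I\mid \nu_p(w+\varepsilon)=k\}$ by a carefully chosen rescaling of the coefficient $b$. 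Your single observation $\nu_p(1+x/z)=\nu_p(z+x)$, followed by the change of variables $x=-\varepsilon+y$ and $d=z-\varepsilon$, collapses the recursion to $\sum_{y\in I}g(\nu_p(y))\,h(\nu_p(d+y))$ and lets one uniform unit-scaling $y\mapsto sy$ on all of $I$ (using that any two nonzero elements of $\mathbb{Z}/p^r\mathbb{Z}$ with equal valuation are associates) handle every level at once. This avoids the case distinction entirely and makes the dependence on $\nu_p(z-\varepsilon)$ transparent; the paper's longer argument gains nothing beyond perhaps foreshadowing the level-by-level decomposition used later in Theorem~\ref{thm:recusion_for_sigma_n}.
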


\begin{proof}
We proceed by induction over $n$.
If $n=2$, then since
$$
\ql c_1, c_2 \qr = \begin{pmatrix}
c_1 c_2-1 & -c_1 \\ c_2 & -1
\end{pmatrix},
$$
we get that if $\ql c_1,c_2\qr = \lambda_z$ for some $z$, then $c_1=c_2=0$ and $z=-1$.
Thus $|\sigma_{z,2}|=0$ for all $z$ with $\nu_p(z-(-1)^{n/2})>0$ and $|\sigma_{-1,2}|=1$.

Recall from Theorem \ref{thm:2nfriezesI}
that for $z\in \varepsilon+I \subseteq R^\times$,
and even $n\in\mathbb{N}_{>3}$ we have
$$
|\sigma_{z,n}| = \sum_{x\in -\varepsilon+I} |\sigma_{x,n-2}| \cdot
|\{ (u,v)\in I \times I \mid uv-1 = x/z \}|.
$$
We fix $x:=-\varepsilon +bp^k$ and let $z=\varepsilon + \tilde{a}p^j$. Then, $z^{-1} = \varepsilon - \tilde{a}p^j +\ldots - \ldots = \varepsilon + ap^j$ for some $a$ which is coprime to $p$. We inspect the different cases:
first, assume $k\neq j$. We have 
$$1+xz^{-1}= 1+(-\varepsilon + bp^k)(\varepsilon +ap^j) = b\varepsilon p^k -a\varepsilon p^j +abp^{k+j}.$$
Hence, $\nu_p(1+xz^{-1})=\min\{\nu_p(x+\varepsilon), \nu_p(z- \varepsilon)\}$ if $\nu_p(x+\varepsilon)\neq \nu_p(z-\varepsilon)$.
Next, assume $\nu_p(x+\varepsilon) = \nu_p(z-\varepsilon)=k$. In this case, we may write $z^{-1}=\varepsilon +ap^k$. We take another element $z^\prime\in \varepsilon +I$ with the property that ${z^\prime}^{-1} = \varepsilon + a^\prime p^k$. We let $x^\prime :=- \varepsilon +b^\prime p^k$ where $b^\prime$ is yet to be specified. Then, $\nu_p(xz^{-1}+1)= k+\nu_p(\varepsilon (b-a)+abp^k)$ and $\nu_p(x^\prime{z^\prime}^{-1}+1)= k+\nu_p(\varepsilon (b^\prime - a^\prime)+a^\prime b^\prime p^k)$. Now, we choose
$$b^\prime := a^\prime a^{-1}z^\prime z^{-1} b.$$
Hence, 
$$\frac{a}{a^\prime}(\varepsilon (b^\prime - a^\prime)+ a^\prime b^\prime p^k) = \varepsilon (b - a)+ab p^k$$
and therefore
$$\nu_p(\varepsilon (b^\prime - a^\prime)+ a^\prime b^\prime p^k) = \nu_p(\varepsilon (b - a)+ab p^k).$$
Next, we set $\mathcal{N}_k := \{w\in -\varepsilon + I\ |\ \nu_p(w+\varepsilon) = k\}$ and define the map $\varphi_{z, z^\prime}: \mathcal{N}_k \rightarrow \mathcal{N}_k$ by
$$\varphi_{z,z^\prime} (x) = \varphi_{z,z^\prime} (-\varepsilon +bp^k) := -\varepsilon +b^\prime p^k = -\varepsilon + (a^\prime a^{-1}z^\prime z^{-1} b)p^k = x^\prime.$$
As an inverse is given by
$$\varphi_{z,z^\prime}^{-1} (x^\prime) = \varphi_{z,z^\prime}^{-1} (-\varepsilon +b^\prime p^k) :=
-\varepsilon +(z{z^\prime}^{-1}a{a^\prime}^{-1}b^\prime)p^k,$$
the map $\varphi_{z, z^\prime}$ is bijective.
\end{proof}

\begin{defin}
In view of Lemma \ref{lem:mu_uv} and Lemma \ref{lem:sigma_const}, we will write
$$
\mu(j) := \begin{cases} (r-1)p^r-(r-2)p^{r-1} & j = r, \\
             (p^r-p^{r-1})(j-1) & j<r
\end{cases}
$$
and $\sigma_n(\ell) := |\sigma_{(-1)^{n/2}+p^\ell,n}|$. Moreover, let
$$n_j := |\{a\in R \mid \nu_p(a)=j \}| =
\begin{cases} 1 & j=r \\
p^{r-j}-p^{r-j-1} & j<r
\end{cases}
$$
for $j=0,\ldots,r$.
\end{defin}

\begin{theor}\label{thm:recusion_for_sigma_n}
The numbers $\sigma_{n}(\ell)$, $n\in\mathbb{N}$, $\ell=1,\ldots,r-1$ satisfy:
\begin{eqnarray*}
\sigma_n(\ell)
&=& \sum_{j=1}^{\ell-1} n_j \sigma_{n-2}(j)\mu(j) + \sum_{j=\ell+1}^{r} n_j \sigma_{n-2}(j)\mu(\ell)\\
&& + \sigma_{n-2}(\ell) \left(\mu(\ell)(p-2)p^{r-\ell-1} + \sum_{j=\ell+1}^{r} n_j \mu(j)\right), \\
\sigma_n(r) &=& \sum_{j=1}^r n_j \sigma_{n-2}(j)\mu(j).
\end{eqnarray*}
\end{theor}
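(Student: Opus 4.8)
The plan is to apply Theorem~\ref{thm:2nfriezesI} verbatim with $\varepsilon=(-1)^{n/2}$ and $z=(-1)^{n/2}+p^\ell$, so that $\sigma_n(\ell)=|\sigma_{z,n}|$, and then to organize the resulting sum over $x\in-\varepsilon+I$ according to the valuation $j:=\nu_p(x+\varepsilon)\in\{1,\dots,r\}$. For each fixed $j$ there are exactly $n_j$ elements $x$ with $\nu_p(x+\varepsilon)=j$, and for each of them Lemma~\ref{lem:sigma_const} gives $|\sigma_{x,n-2}|=\sigma_{n-2}(j)$, since $(-1)^{(n-2)/2}=-\varepsilon$ and hence $\nu_p(x-(-1)^{(n-2)/2})=\nu_p(x+\varepsilon)=j$. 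It therefore remains to evaluate the factor $|\{(u,v)\in I\times I\mid uv-1=x/z\}|$ as a function of $j$ and $\ell$.

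For that factor, I would rewrite $uv-1=x/z$ as $uv=1+x/z=(z+x)z^{-1}$; since $z$ is a unit, $\nu_p(1+x/z)=\nu_p(z+x)$, and Lemma~\ref{lem:mu_uv} then identifies the factor with $\mu(\nu_p(z+x))$. Writing $z+x=p^\ell+(x+\varepsilon)$ and using $\nu_p(p^\ell)=\ell$ together with $\nu_p(x+\varepsilon)=j$, the valuation equals $\min(\ell,j)$ whenever $\ell\ne j$: thus every $x$ with $j<\ell$ contributes $\mu(j)$ and every $x$ with $j>\ell$ contributes $\mu(\ell)$. Summing over these $x$ reproduces exactly the first two sums $\sum_{j=1}^{\ell-1}n_j\sigma_{n-2}(j)\mu(j)$ and $\sum_{j=\ell+1}^{r}n_j\sigma_{n-2}(j)\mu(\ell)$ of the statement.

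The main work, and the step I expect to be the obstacle, is the boundary case $j=\ell$ (for $\ell\le r-1$), where $\nu_p(z+x)$ is no longer determined by $\min(\ell,j)$. Here I would write $x+\varepsilon=p^\ell a_1$ with $a_1$ coprime to $p$, so that $z+x=p^\ell(1+a_1)$ and $\nu_p(z+x)=\ell+\nu_p(1+a_1)$; as $x$ ranges over the $n_\ell$ elements with $\nu_p(x+\varepsilon)=\ell$, the parameter $a_1$ ranges over the units of $\mathbb{Z}/p^{r-\ell}\mathbb{Z}$. The key computation is the distribution of $s:=\nu_p(1+a_1)$: the units with $a_1\not\equiv-1\pmod{p}$ give $s=0$ and number $(p-2)p^{r-\ell-1}$ (note this vanishes for $p=2$, as it must), while for $1\le s\le r-\ell$ a short count of the units with $a_1\equiv-1\pmod{p^s}$ but $a_1\not\equiv-1\pmod{p^{s+1}}$ (the top value $s=r-\ell$ being the single unit $a_1=-1$) yields exactly $n_{\ell+s}$ of them. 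Since such $x$ contribute $\mu(\ell+s)=\mu(\nu_p(z+x))$, the block $j=\ell$ sums to $\sigma_{n-2}(\ell)\bigl(\mu(\ell)(p-2)p^{r-\ell-1}+\sum_{j=\ell+1}^{r}n_j\mu(j)\bigr)$, which is precisely the remaining term; a telescoping check that these counts add up to $n_\ell$ confirms that no $x$ is omitted.

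Finally, for $\ell=r$ one has $z=(-1)^{n/2}$, hence $z+x=x+\varepsilon$ and $\nu_p(z+x)=j$ uniformly, so no boundary case arises and the sum collapses to $\sigma_n(r)=\sum_{j=1}^{r}n_j\sigma_{n-2}(j)\mu(j)$. Throughout, the hypotheses that $n$ is even and $n>3$ guarantee that Theorem~\ref{thm:2nfriezesI} applies and that $\sigma_{n-2}$ is already available (the base case $n=2$ being supplied by Lemma~\ref{lem:sigma_const}).
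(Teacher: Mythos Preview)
Your proposal is correct and follows essentially the same route as the paper's proof: both start from Theorem~\ref{thm:2nfriezesI}, stratify the sum over $x\in-\varepsilon+I$ by $j=\nu_p(x+\varepsilon)$, identify the $(u,v)$-factor with $\mu(\nu_p(1+x/z))$, and then split into the three cases $j<\ell$, $j>\ell$, $j=\ell$ (with the separate treatment of $\ell=r$). The only cosmetic difference is that the paper first performs a substitution to rewrite the argument of $\mu$ as $\nu_p(x-\varepsilon p^\ell)$ before doing the case analysis, whereas you work directly with $\nu_p(z+x)=\nu_p(p^\ell+(x+\varepsilon))$; the subsequent count in the boundary case $j=\ell$ (your distribution of $\nu_p(1+a_1)$ versus the paper's parametrization $x=(y+pw)p^\ell$) is the same computation in slightly different coordinates.
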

\begin{proof}
With the new notation, Theorem \ref{thm:2nfriezesI} translates to
\begin{eqnarray*}
\sigma_n(\ell) &=& \sum_{x\in -\varepsilon+I} \sigma_{n-2}(\nu_p(x+\varepsilon)) \cdot
\mu(\nu_p(x/(\varepsilon+p^\ell)+1)) \\
&=& \sum_{x\in I} \sigma_{n-2}(\nu_p(x)) \cdot \mu(\nu_p((-\varepsilon+x)\cdot (\varepsilon+p^\ell)+1)) \\
&=& \sum_{x\in I} \sigma_{n-2}(\nu_p(x)) \cdot \mu(\nu_p(-\varepsilon p^\ell + \varepsilon x + x p^\ell)) \\
&=& \sum_{x\in I} \sigma_{n-2}(\nu_p(x/(\varepsilon+p^\ell))) \cdot \mu(\nu_p(-\varepsilon p^\ell + x)) \\
&=& \sum_{x\in I} \sigma_{n-2}(\nu_p(x)) \cdot \mu(\nu_p(x-\varepsilon p^\ell))
\end{eqnarray*}
where $\varepsilon=(-1)^{n/2}$.
In the last sum we distinguish the cases $\nu_p(x)<\ell$, $\nu_p(x)>\ell$, $\nu_p(x)=\ell$.
Note that the case $\ell=r$ has to be treated separately (see below), so assume $\ell<r$ first.
If $\nu_p(x)<\ell$, then $\nu_p(x-\varepsilon p^\ell)=\nu_p(x)$; if $\nu_p(x)>\ell$, then $\nu_p(x-\varepsilon p^\ell)=\ell$:
\begin{eqnarray*}
\sigma_n(\ell)
&=& \sum_{j=1}^{\ell-1} n_j \sigma_{n-2}(j)\mu(j) + \sum_{j=\ell+1}^{r} n_j \sigma_{n-2}(j)\mu(\ell)\\
&& +
\sigma_{n-2}(\ell) \underbrace{\sum_{y=1}^{p-1} \sum_{w=0}^{p^{r-\ell-1}-1} \mu(\ell + \nu_p(pw + y -\varepsilon))}_{x = (y+pw)p^\ell, \:\: \nu_p(x)=\ell}.
\end{eqnarray*}
In the third sum we consider the cases $y\ne\varepsilon$ and $y=\varepsilon$:
\begin{eqnarray*}
\sigma_n(\ell)
&=& \sum_{j=1}^{\ell-1} n_j \sigma_{n-2}(j)\mu(j) + \sum_{j=\ell+1}^{r} n_j \sigma_{n-2}(j)\mu(\ell)\\
&& +
\sigma_{n-2}(\ell) \left(\underbrace{\mu(\ell)(p-2)p^{r-\ell-1}}_{y\ne\varepsilon} + \underbrace{\sum_{w=0}^{p^{r-\ell-1}-1} \mu(\ell + 1 + \nu_p(w))}_{y=\varepsilon}\right) \\
&=& \sum_{j=1}^{\ell-1} n_j \sigma_{n-2}(j)\mu(j) + \sum_{j=\ell+1}^{r} n_j \sigma_{n-2}(j)\mu(\ell)\\
&& +
\sigma_{n-2}(\ell) \left(\mu(\ell)(p-2)p^{r-\ell-1} + \sum_{j=\ell+1}^{r} n_j \mu(j)\right).
\end{eqnarray*}
If $\ell=r$, we obtain the simpler formula
\begin{eqnarray*}
\sigma_n(r) &=& \sum_{j=1}^r n_j \sigma_{n-2}(j)\mu(j)
\end{eqnarray*}
because then $\nu_p(x-\varepsilon p^\ell)=\nu_p(x)$.
\end{proof}

\begin{lemma}\label{lem:some_sums}
Let $p$ be a prime number, let $r\in\mathbb{N}_{\geq 2}$ and let $\ell \in\{1,\ldots, r-1\}$. Then, the following assertions hold:
\begin{enumerate}
\item[\rm (a)] $\sum\limits_{i=1}^{m} p^{-i} =\frac{1-p^{-m}}{p-1}$,
\item[\rm (b)] $\sum\limits_{j=1}^{m} jp^{-j} =\frac{p^{-m}\left((-1-m)p+m+p^{m+1}\right)}{{(p-1)}^2}$,
\item[\rm (c)] $\sum\limits_{j=\ell +1}^{r-1} (p^{r-j}-p^{r-j-1})\cdot (p^r-p^{r-1})\cdot (j-
1) =\\ p^{r-1}(-rp+r-1)-p^{2r-2-\ell}(-p-\ell p+\ell ) -p^{2r-2}(-p^{2-r}+p^{1-r}) +p^{2r-2}(-p^{1-\ell}+p^{-\ell})$.
\end{enumerate}
\end{lemma}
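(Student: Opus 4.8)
These three identities are elementary finite-sum evaluations, so the plan is to prove (a) and (b) directly and then obtain (c) by extracting constant factors and reducing it to the first two parts.

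For (a), I would simply recognize $\sum_{i=1}^{m}p^{-i}$ as a finite geometric series with common ratio $p^{-1}$ and first term $p^{-1}$, so that it equals $\frac{p^{-1}(1-p^{-m})}{1-p^{-1}}$; multiplying numerator and denominator by $p$ turns this into $\frac{1-p^{-m}}{p-1}$, as claimed. For (b), the cleanest route is the standard arithmetic–geometric telescoping. Writing $S=\sum_{j=1}^{m}jp^{-j}$ and forming $S-p^{-1}S$, the inner sum telescopes to $(1-p^{-1})S=\sum_{j=1}^{m}p^{-j}-mp^{-(m+1)}$. I would then evaluate the remaining geometric sum by part (a), use $\frac{1}{1-p^{-1}}=\frac{p}{p-1}$ to solve for $S$, and collect everything over the common denominator $(p-1)^2$. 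Factoring $p^{-m}$ out of the numerator yields exactly $\frac{p^{-m}\bigl((-1-m)p+m+p^{m+1}\bigr)}{(p-1)^2}$; the only care needed is the algebraic bookkeeping in recognising $p=p^{-m}p^{m+1}$ and $mp^{-m}(p-1)=p^{-m}(mp-m)$.

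For (c), the decisive observation is that both $p$-power differences appearing in the summand carry a factor of $p-1$: indeed $p^{r-j}-p^{r-j-1}=p^{r-1}(p-1)p^{-j}$ and $p^{r}-p^{r-1}=p^{r-1}(p-1)$. Pulling these constants out of the sum collapses the left-hand side to $p^{2r-2}(p-1)^2\sum_{j=\ell+1}^{r-1}(j-1)p^{-j}$. I would then split $(j-1)p^{-j}=jp^{-j}-p^{-j}$ and rewrite each partial sum over $\{\ell+1,\ldots,r-1\}$ as the difference of the sum over $\{1,\ldots,r-1\}$ and the sum over $\{1,\ldots,\ell\}$, which lets me apply (a) twice and (b) twice.

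The remaining work, and the only genuine obstacle, is the purely mechanical simplification of the four resulting closed forms into the precise four-term expression on the right-hand side of (c). I expect this to be the longest step but entirely routine: after substituting the formulas from (a) and (b) with the appropriate upper limits $r-1$ and $\ell$, the prefactor $p^{2r-2}(p-1)^2$ cancels the denominators $(p-1)$ and $(p-1)^2$, and the $p^{-m}$-type terms combine with $p^{2r-2}$ to produce the powers $p^{r-1}$, $p^{2r-2-\ell}$, and $p^{2r-2}$ that appear in the stated answer. I would organise this as a single grouping of terms by their $p$-exponent to keep the cancellations transparent.
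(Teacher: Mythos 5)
Your proposal is correct and follows essentially the same route as the paper: part (c) is reduced to (a) and (b) via exactly the same factorization $p^{2r-2}(p-1)^2\sum_{j=\ell+1}^{r-1}(j-1)p^{-j}$, the same splitting $(j-1)p^{-j}=jp^{-j}-p^{-j}$, and the same rewriting of the range $\{\ell+1,\ldots,r-1\}$ as a difference of initial segments. The only (immaterial) difference is in (b), where you obtain the closed form by the telescoping computation $S-p^{-1}S$ whereas the paper differentiates the geometric sum $\frac{1-x^{-m}}{x-1}$ with respect to $x$; both derivations are standard and equally valid.
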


\begin{proof}
 \begin{enumerate}
\item[(a)] Direct verification using the geometric sum formula.
\item[(b)] For $x\in\mathbb{R}\setminus \{1\}$ we have
$$\sum\limits_{j=1}^m -jx^{-j-1}=\frac{d}{dx}\left( \sum\limits_{j=1}^m x^{-j} \right) =\frac{d}{dx}\left( \frac{1-x^{-m}}{x-1} \right).$$ Therefore,
$$\sum\limits_{j=1}^m jx^{-j}=(-x)\cdot \frac{d}{dx}\left( \frac{1-x^{-m}}{x-1} \right) = \frac{x^{-m}\left((-1-m)x+m+x^{m+1}\right)}{{(x-1)}^2}.$$
The claim follows.
\item[(c)] We have
\begin{eqnarray*}
&{\phantom{=}}& \sum\limits_{j=\ell +1}^{r-1} (p^{r-j}-p^{r-j-1})\cdot (p^r-p^{r-1})\cdot (j-
1) ={(p^r-p^{r-1})}^2 \sum\limits_{j=\ell +1}^{r-1} p^{-j}(j-1)\\
&=& {(p^r-p^{r-1})}^2 \left( \left(\sum\limits_{j=1}^{r-1} jp^{-j} - \sum\limits_{j=1}^{\ell} jp^{-j}\right) - \left(\sum\limits_{j=1}^{r-1} p^{-j} - \sum\limits_{j=1}^{\ell} p^{-j} \right) \right)\\
&\stackrel{(a), (b)}{=}& {(p^r-p^{r-1})}^2 \left(\frac{p^{-(r-1)}(r-rp-1+p^r)}{{(p-1)}^2} - \frac{p^{-\ell}((-1-\ell)p+\ell+p^{\ell +1})}{{(p-1)}^2} - \frac{1-p^{1-r}}{p-1} +\frac{1-p^{-\ell}}{p-1}\right)\\
&=&{(p^{r-1})}^2[p^{-(r-1)}(-rp+r-1+p^r) - p^{-\ell}((-1-\ell)p+\ell+p^{\ell +1})\\
&-&(p-1)(1-p^{1-r})+(p-1)(1-p^{-\ell})]\\
&=&{(p^{r-1})}^2[p^{-(r-1)}(-rp+r-1+p^r) - p^{-\ell}((-1-\ell)p+\ell+p^{\ell +1})+(p-1)(p^{1-r}-p^{-\ell})]\\
&=&{(p^{r-1})}^2[p^{-(r-1)}(-rp+r-1) - p^{-\ell}((-1-\ell)p+\ell)+(p-1)(p^{1-r}-p^{-\ell})]\\
&=&p^{r-1}(-rp+r-1)-p^{2r-2-\ell}(-p-\ell p+\ell ) -p^{2r-2}(-p^{2-r}+p^{1-r}) +p^{2r-2}(-p^{1-\ell}+p^{-\ell})
\end{eqnarray*}
as claimed.
\end{enumerate}
\end{proof}

\begin{lemma}\label{lem:nice_expression_for_parts_of_sigma_n}
For $r\in\mathbb{N}_{\geq 2}$ and $\ell=1,\ldots,r-1$ we have:
$$
\mu(\ell)(p-2)p^{r-\ell-1} + \sum_{j=\ell+1}^{r} n_j \mu(j) =
(\ell-1)p^{2r-\ell}-(2\ell-3)p^{2r-\ell-1}+(\ell-1)p^{2r-\ell-2}.
$$
\end{lemma}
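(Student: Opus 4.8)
The plan is to prove this by direct substitution of the definitions of $\mu$ and $n_j$, followed by a careful collection of terms, with the bulk of the sum supplied by Lemma~\ref{lem:some_sums}(c). Observe that the right-hand side is a polynomial in $p$ supported only on the three powers $p^{2r-\ell}$, $p^{2r-\ell-1}$, $p^{2r-\ell-2}$ and carries no dependence on $r$; so the strategy is to expand each piece of the left-hand side and verify that the coefficients of these three powers come out as stated while every $r$-dependent term cancels.

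First I would treat the term $\mu(\ell)(p-2)p^{r-\ell-1}$. Since $\ell\le r-1$, the definition gives $\mu(\ell)=(p^r-p^{r-1})(\ell-1)=(\ell-1)(p-1)p^{r-1}$, so this term equals $(\ell-1)(p-1)(p-2)p^{2r-\ell-2}$, which expands to $(\ell-1)\bigl(p^{2r-\ell}-3p^{2r-\ell-1}+2p^{2r-\ell-2}\bigr)$.

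Next I would split the sum $\sum_{j=\ell+1}^{r} n_j\mu(j)$ into the range $j=\ell+1,\ldots,r-1$ and the single boundary term $j=r$, because $n_j$ and $\mu(j)$ switch to their special definitions exactly at $j=r$. On the range $j\le r-1$ both definitions use the $j<r$ branch, and $\sum_{j=\ell+1}^{r-1} n_j\mu(j)=\sum_{j=\ell+1}^{r-1}(p^{r-j}-p^{r-j-1})(p^r-p^{r-1})(j-1)$ is precisely the quantity evaluated in Lemma~\ref{lem:some_sums}(c). The boundary term contributes $n_r\mu(r)=(r-1)p^r-(r-2)p^{r-1}$.

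The final step, which is the only real point of the argument, is to add the output of Lemma~\ref{lem:some_sums}(c) to this boundary term and observe the cancellation. The expression from (c) contains the $r$-dependent terms $(1-r)p^r$ and $(r-2)p^{r-1}$, and these cancel exactly against the $j=r$ contribution $(r-1)p^r-(r-2)p^{r-1}$; what survives is $\ell\,p^{2r-\ell-1}+(1-\ell)p^{2r-\ell-2}$, free of $r$. Adding this to the expansion of the first term and collecting coefficients, the power $p^{2r-\ell}$ gets $(\ell-1)$, the power $p^{2r-\ell-1}$ gets $\ell-3(\ell-1)=-(2\ell-3)$, and the power $p^{2r-\ell-2}$ gets $(1-\ell)+2(\ell-1)=(\ell-1)$, which is the claim. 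The main obstacle is purely the bookkeeping in this last collection — in particular confirming that all $p^r$ and $p^{r-1}$ terms cancel so that the result depends only on $\ell$ — rather than any conceptual difficulty.
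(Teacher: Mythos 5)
Your proposal is correct and follows essentially the same route as the paper: substitute the definitions of $\mu$ and $n_j$, split off the $j=r$ boundary term, evaluate the remaining sum via Lemma~\ref{lem:some_sums}(c), and verify that the $r$-dependent terms cancel while the three surviving powers of $p$ carry the claimed coefficients. The cancellations and coefficient computations you describe ($(1-r)p^r+(r-2)p^{r-1}$ against $(r-1)p^r-(r-2)p^{r-1}$, and the final collection giving $\ell-1$, $-(2\ell-3)$, $\ell-1$) all check out.
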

\begin{proof}
Using the definitions of $\mu$ and $n_j$ we obtain that our claim is equivalent to
\begin{eqnarray*}
&\phantom{=}&
(p^r-p^{r-1})\cdot (\ell -1)\cdot (p-2)\cdot p^{r-\ell -1}\\
&+& \sum\limits_{j=\ell +1}^{r-1} (p^{r-j}-p^{r-j-1}) (p^r-p^{r-1}) (j-1) + 1\cdot ((r-1)p^r -(r-2)p^{r-1})\\
&=& (\ell-1)p^{2r-\ell}-(2\ell-3)p^{2r-\ell-1}+(\ell-1)p^{2r-\ell-2}.
\end{eqnarray*}
By Lemma \ref{lem:some_sums}(c) the left-hand side of the last equation is equal to 
\begin{align*}
&{\phantom{\Leftrightarrow}}
(p^r-p^{r-1})\cdot (\ell -1)\cdot (p-2)\cdot p^{r-\ell -1}\\
&+p^{r-1}(-rp+r-1)-p^{2r-2-\ell}(-p-\ell p+\ell ) -p^{2r-2}(-p^{2-r}+p^{1-r}) +p^{2r-2}(-p^{1-\ell}+p^{-\ell})\\
&+ 1\cdot (r-1)p^r -(r-2)p^{r-1}\\
&= p^{2r-2-\ell}[(p-1)(\ell -1)(p-2)+(p+\ell p -\ell)-p+1]\\
&+p^{r-1}(-rp+r-1) -p^{2r-2}(-p^{2-r}+p^{1-r})   + 1\cdot (r-1)p^r -(r-2)p^{r-1}\\
&= p^{2r-2-\ell}[(\ell -1)(p^2-3p+2)+\ell p -\ell +1)]\\
&+ rp^{r-1} -rp^r -p^{r-1} +p^r -p^{r-1} +rp^r -p^r -rp^{r-1}+2p^{r-1}\\
&= p^{2r-2-\ell}[p^2\ell -3p\ell +2\ell -p^2 +3p -2 +p\ell -\ell +1]\\
&= p^{2r-2-\ell}(p^2\ell -p^2 -2p\ell +\ell +3p -1)\\
&= (\ell-1)p^{2r-\ell}-(2\ell-3)p^{2r-\ell-1}+(\ell-1)p^{2r-\ell-2}.
\end{align*}
\end{proof}

\begin{lemma}\label{lem:formula_for_sum_not_only_involving_geometric_things_but_simultaneously_also_involving_q_integers}
    Let $p$ be a prime number, let $s\in \mathbb{N}_{\geq 2}$ and let $m\in \mathbb{N}_{\geq 3}$. Then, the following holds.
    \begin{align*}
    &{\phantom{0}}\sum\limits_{j=2}^{s-1}p^{-j(m-1)}[j-1]_{p^{m-2}}(j-1)\\
    &= \frac{1}{p^{m-2}-1}\left( \frac{(1-s)p^{4-s-m}+(s-2)p^{3-s-m}+p^{2-m}}{{(p-1)}^2} - \frac{(1-s)p^{(m-1)(2-s)} + (s-2)p^{(m-1)(1-s)}+1}{{(p^{m-1}-1)}^2} \right)\!.
    \end{align*} 
\end{lemma}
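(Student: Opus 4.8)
The plan is to expand the $q$-integer into its geometric definition and thereby reduce the whole sum to two copies of a single elementary power-weighted geometric sum, which can then be evaluated using Lemma \ref{lem:some_sums}. Writing $[j-1]_{p^{m-2}} = \frac{p^{(m-2)(j-1)}-1}{p^{m-2}-1}$, I would pull the constant factor $\frac{1}{p^{m-2}-1}$ out of the sum and split the numerator $p^{(m-2)(j-1)}-1$ into two pieces, obtaining
$$
\sum_{j=2}^{s-1} p^{-j(m-1)}[j-1]_{p^{m-2}}(j-1) = \frac{1}{p^{m-2}-1}\left( \sum_{j=2}^{s-1} p^{(m-2)(j-1)-j(m-1)}(j-1) - \sum_{j=2}^{s-1} p^{-j(m-1)}(j-1)\right).
$$
The key structural observation is that, after simplifying the exponent in the first sum, both sums are instances of one and the same template.

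Concretely, a direct computation of the exponent gives $(m-2)(j-1)-j(m-1) = 2-j-m$, so the first sum equals $p^{2-m}\sum_{j=2}^{s-1}(j-1)p^{-j}$. Hence both sums have the form $S(y):=\sum_{j=2}^{s-1}(j-1)y^{-j}$, with $y=p$ in the first and $y=p^{m-1}$ in the second. I would evaluate $S(y)$ once and for all: substituting $i=j-1$ gives $S(y)=y^{-1}\sum_{i=1}^{s-2} i\,y^{-i}$, and applying Lemma \ref{lem:some_sums}(b) with $x=y$ and upper limit $s-2$ yields, after collecting the powers of $y$,
$$
S(y) = \frac{(1-s)y^{2-s}+(s-2)y^{1-s}+1}{(y-1)^2}.
$$

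It then remains to specialize and reassemble. Taking $y=p$ and multiplying by the surviving factor $p^{2-m}$ produces exactly the first fraction $\frac{(1-s)p^{4-s-m}+(s-2)p^{3-s-m}+p^{2-m}}{(p-1)^2}$, while taking $y=p^{m-1}$ produces the second fraction $\frac{(1-s)p^{(m-1)(2-s)}+(s-2)p^{(m-1)(1-s)}+1}{(p^{m-1}-1)^2}$; substituting both back into the bracketed difference gives the claimed identity. The whole argument is essentially routine once this structure is seen, so the only genuine obstacle is the arithmetic bookkeeping of exponents. The most error-prone point is verifying $(m-2)(j-1)-j(m-1)=2-j-m$, since a sign or coefficient slip there propagates into every subsequent power of $p$; and one must keep the two denominators $(p-1)^2$ and $(p^{m-1}-1)^2$ strictly separate, as they do not combine with the overall factor $\frac{1}{p^{m-2}-1}$.
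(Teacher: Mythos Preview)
Your proof is correct and follows essentially the same route as the paper: expand the $q$-integer, pull out $\frac{1}{p^{m-2}-1}$, simplify the exponent to $2-m-j$, and reduce to weighted geometric sums evaluated via Lemma~\ref{lem:some_sums}. Your index shift $i=j-1$ is a minor streamlining---the paper instead writes $(j-1)=j-1$ and invokes both parts (a) and (b) of Lemma~\ref{lem:some_sums}, then subtracts the $j=1$ boundary terms---but the structure and final computation are identical.
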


\begin{proof}
    We have
\begin{align*}
&{\phantom{0}}\sum\limits_{j=2}^{s-1}p^{-j(m-1)}[j-1]_{p^{m-2}}(j-1) = \sum\limits_{j=2}^{s-1} p^{-j(m-1)}\frac{p^{(m-2)(j-1)}-1}{p^{m-2}-1} (j-1)\\
&= \frac{1}{p^{m-2}-1}\sum\limits_{j=2}^{s-1} (p^{2-m-j}-p^{-j(m-1)})(j-1)\\
&= \frac{p^{2-m}}{p^{m-2}-1}\left( \sum\limits_{j=2}^{s-1} jp^{-j} - \sum\limits_{j=2}^{s-1} p^{-j}\right) - \frac{1}{p^{m-2}-1}\left( \sum\limits_{j=2}^{s-1} j{(p^{m-1})}^{-j} -        \sum\limits_{j=2}^{s-1} {(p^{m-1})}^{-j}\right)\\
&= \frac{p^{2-m}}{p^{m-2}-1}\cdot \left( \frac{p^{-s+1}(-sp+s-1+p^s)}{{(p-1)}^2} -\frac{1}{p} - \frac{1-p^{-s+1}}{p-1} + \frac{1}{p} \right)\\
&- \frac{1}{p^{m-2}-1}\left( \frac{p^{(m-1)(1-s)}(-sp^{m-1}+s-1+p^{(m-1)s})}{{(p^{m-1}-1)}^2} - \frac{1}{p^{m-1}} - \frac{1-p^{(m-1)(1-s)}}{p^{m-1}-1} +\frac{1}{p^{m-1}} \right)\\
&=\frac{p^{2-m}}{p^{m-2}-1}\cdot \frac{p^{1-s}(s-sp-1+p^s)+(1-p)(1-p^{1-s})}{{(p-1)}^2}\\
&- \frac{1}{p^{m-2}-1}\cdot \frac{p^{(m-1)(1-s)}(-sp^{m-1}+s-1+p^{(m-1)s}) + (1-p^{m-1})(1-p^{(m-1)(1-s)})}{{(p^{m-1}-1)}^2}\\
&= \frac{p^{2-m}}{p^{m-2}-1}\cdot \frac{-sp^{2-s}+sp^{1-s}-p^{1-s}+1-p^{1-s}+p^{2-s}}{{(p-1)}^2}\\
&- \frac{1}{p^{m-2}-1}\cdot \frac{-sp^{(m-1)(2-s)}+(s-1)p^{(m-1)(1-s)}+1-p^{(m-1)(1-s)}+p^{(m-1)(2-s)}}{{(p^{m-1}-1)}^2}\\
&= \frac{1}{p^{m-2}-1}\left( \frac{(1-s)p^{4-s-m}+(s-2)p^{3-s-m}+p^{2-m}}{{(p-1)}^2} - \frac{(1-s)p^{(m-1)(2-s)} + (s-2)p^{(m-1)(1-s)}+1}{{(p^{m-1}-1)}^2} \right)\!.
\end{align*}
\end{proof}

\begin{defin}
We use the notation of $q$-numbers,
$$
[n]_q := \sum_{i=0}^{n-1} q^i = \frac{q^n-1}{q-1}
$$
where the last equality requires $q\ne 1$.
\end{defin}

\begin{lemma}\label{lem:polynomial_long_division}
Let $x\in \mathbb{R}^{+}\setminus \{1\}$ and let $r\in\mathbb{N}$. Then, the following equation holds:
$$\frac{(1-r)x^{2-r}+(r-2)x^{1-r}+1}{x-1} =(1-r)x^{1-r} + x^{1-r}[r-1]_x.$$
\end{lemma}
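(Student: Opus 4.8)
The plan is to clear the denominator and reduce the claim to a polynomial identity in $x$. Since $x\neq 1$, the factor $(x-1)$ is invertible, so it suffices to show that the two sides agree after multiplication by $(x-1)$. The key observation is that the $q$-integer on the right telescopes against this factor: using the closed form $[r-1]_x=\frac{x^{r-1}-1}{x-1}$, one gets $x^{1-r}[r-1]_x\,(x-1)=x^{1-r}(x^{r-1}-1)=1-x^{1-r}$, which removes the bracket entirely and turns the right-hand side into an honest polynomial expression in $x^{1-r}$ and $x^{2-r}$.

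First I would expand the remaining term $(1-r)x^{1-r}(x-1)=(1-r)x^{2-r}-(1-r)x^{1-r}$. Adding the two contributions, the coefficient of $x^{2-r}$ is $1-r$, the coefficient of $x^{1-r}$ is $-(1-r)-1=r-2$, and the constant term is $1$; this is exactly the numerator $(1-r)x^{2-r}+(r-2)x^{1-r}+1$ appearing on the left. Dividing back by $(x-1)$ then yields the asserted equality. Equivalently, as the name of the lemma suggests, one may perform the long division of this numerator by $x-1$ directly and read off the quotient $(1-r)x^{1-r}+x^{1-r}[r-1]_x$ with zero remainder; but clearing denominators as above avoids invoking the division algorithm and keeps the verification to a single line of algebra.

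There is no serious obstacle here: the argument is a short computation once the telescoping identity $x^{1-r}[r-1]_x(x-1)=1-x^{1-r}$ is spotted. The only points requiring care are the sign bookkeeping with the negative exponents $x^{1-r}$ and $x^{2-r}$, and the consistent use of the convention for $[r-1]_x$ in the degenerate case $r=1$, where $[0]_x=0$ and both sides of the identity vanish (the numerator reduces to $0\cdot x+(-1)\cdot 1+1=0$). In every other case $r-1\geq 1$, the closed form $\frac{x^{r-1}-1}{x-1}$ used above is valid, and the claim follows.
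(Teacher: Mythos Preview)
Your proof is correct and essentially mirrors the paper's argument: the paper performs one step of long division to obtain $(1-r)x^{1-r}$ with remainder $-(x^{1-r}-1)$ and then rewrites $-\frac{x^{1-r}-1}{x-1}$ as $x^{1-r}[r-1]_x$, while you go in the reverse direction, multiplying the right-hand side by $(x-1)$ and using $[r-1]_x(x-1)=x^{r-1}-1$ to recover the numerator. Both arguments rest on the same telescoping identity, and you even note the long-division variant explicitly.
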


\begin{proof}
By polynomial long division, we obtain
\begin{align*}
\left((1-r)x^{2-r}+(r-2)x^{1-r}+1\right)/(x-1) &= (1-r)x^{1-r} - \frac{x^{1-r}-1}{x-1} = (1-r)x^{1-r} -\frac{\frac{1-x^{r-1}}{x^{r-1}}}{x-1}\\
&=(1-r)x^{1-r} + \frac{1}{x^{r-1}}\cdot \frac{x^{r-1}-1}{x-1}=(1-r)x^{1-r} + x^{1-r}[r-1]_x.
\end{align*}
\end{proof}

\begin{lemma}\label{lem:alpha_beta_gamma}
Let $p$ be a prime number, let $r\in\mathbb{N}_{\geq 3}$, and let $x\in \mathbb{R}^{+}\setminus \{1\}$. Set
\begin{align*}
\alpha &:= p^rx^r(x^{r-1}-1)(\frac{x}{p}-1)-p^{r-1}x^r(x^{r-2}-1)(\frac{x}{p}-1),\\
\beta &:= x^{2r-1}p^{r-2}\cdot \left( (1-r)[\frac{p^{3-r}}{x} + (r-2)\frac{p^{2-r}}{x} +\frac{p}{x}] - {(p-1)}^2[(x-1)(1-r)x^{1-r}+x^{1-r}(x^{r-1}-1)]\right),\\
\gamma &:=(x-1)\left( x^r\left( 
 \left(\frac{x}{p} \right)^{r-1}-1 \right) - \frac{x^r}{p}\left( 
 \left(\frac{x}{p} \right)^{r-2}-1 \right) \right)\cdot \left( (r-1)p^r -(r-2)p^{r-1} \right).
\end{align*}
Then, $\alpha = \beta + \gamma$.
\end{lemma}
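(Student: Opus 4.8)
The statement is a closed-form algebraic identity. For each fixed prime $p$ and integer $r\ge 3$, the three quantities $\alpha,\beta,\gamma$ are rational functions of the single real variable $x$, and the assertion $\alpha=\beta+\gamma$ is their equality as elements of $\mathbb{R}(x)$. Since $x$ ranges over the infinite set $\mathbb{R}^{+}\setminus\{1\}$, it suffices to prove equality of the corresponding Laurent polynomials, that is, to match coefficients of equal powers of $x$. The plan is therefore to reduce all three expressions to Laurent polynomials in $x$, whose coefficients are themselves Laurent polynomials in $p$ depending affine-linearly on $r$, and then to compare them.

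The first step is to remove the $q$-number hidden inside $\beta$. The inner bracket $(x-1)(1-r)x^{1-r}+x^{1-r}(x^{r-1}-1)$ is exactly the expression that results from clearing the denominator in Lemma \ref{lem:polynomial_long_division}: using $(x-1)[r-1]_x=x^{r-1}-1$ one checks that it equals $(1-r)x^{2-r}+(r-2)x^{1-r}+1$. I would substitute this closed form, after which $\beta$ is a genuine Laurent polynomial containing no $[\,\cdot\,]_x$-term. The quantities $\alpha$ and $\gamma$ contain nothing of this kind: here I would simply expand the products $(x^{r-1}-1)(x/p-1)$ and $(x^{r-2}-1)(x/p-1)$ occurring in $\alpha$, and $\left((x/p)^{r-1}-1\right)$ and $\left((x/p)^{r-2}-1\right)$ occurring in $\gamma$, and clear the powers of $p$ in the denominators, so that every term becomes a monomial $x^{a}p^{b}$.

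A useful bookkeeping device is to exploit the shared structure before expanding. The factor $x^{r}(x/p-1)$ is common to both summands of $\alpha$, while $\gamma$ is already presented as $(x-1)$ times a middle factor times $(r-1)p^{r}-(r-2)p^{r-1}$, and this last factor is precisely $\mu(r)$ in the earlier notation. Pulling these factors out first keeps the number of monomials small. After the reductions every term on either side is a monomial $c(r)\,x^{a}p^{b}$ whose $x$-exponent lies in the short list $\{r,\,r+1,\,2r-2,\,2r-1,\,2r\}$. Collecting by power of $x$, the identity splits into five scalar identities in $p$ and $r$, one for each exponent, and I would verify each by checking that the coefficient read off from $\alpha$ equals the sum of those read off from $\beta$ and from $\gamma$; since the coefficients are affine-linear in $r$, each check further decomposes into matching the $r$-linear part and the $r$-free part, a finite and entirely mechanical comparison of Laurent polynomials in $p$.

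The only real difficulty is the volume of the computation rather than any conceptual point. The cross terms produced by $(x/p-1)$ in $\alpha$, and by $(x-1)$ together with the $p^{-(r-1)}$ and $p^{-1}$ denominators in $\gamma$, proliferate, and the $r$-dependence entering through $\mu(r)$ and through the surviving $(1-r)$ and $(r-2)$ factors from Lemma \ref{lem:polynomial_long_division} must be carried exactly. Treating $r$ as a formal parameter throughout, so that the five power-of-$x$ comparisons are made once and for all rather than separately for each value of $r$, and grouping terms by the common factors $(x-1)$ and $(x/p-1)$ as indicated above, is what I expect to keep the bookkeeping under control.
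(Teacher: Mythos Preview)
Your proposal is correct and follows essentially the same route as the paper: both expand $\alpha$, $\beta$, and $\gamma$ into Laurent polynomials in $x$, observe that only the exponents $\{r,\,r+1,\,2r-2,\,2r-1,\,2r\}$ occur, and compare the resulting $p$- and $r$-dependent coefficients termwise. Your preliminary use of Lemma~\ref{lem:polynomial_long_division} to collapse the bracket $(x-1)(1-r)x^{1-r}+x^{1-r}(x^{r-1}-1)$ into $(1-r)x^{2-r}+(r-2)x^{1-r}+1$ is a small shortcut the paper does not invoke explicitly (it simply expands directly), but otherwise the arguments coincide.
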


\begin{proof}
    We have
\begin{align*}
\alpha &=p^rx^r\left( \frac{x^r}{p} -x^{r-1} - \frac{x}{p} + 1 \right) -p^{r-1}x^r \left( \frac{x^{r-1}}{p} -x^{r-2} -\frac{x}{p} +1 \right)\\
&= x^{2r}p^{r-1}-x^{2r-1}p^r -x^{r+1}p^{r-1} +x^rp^r -x^{2r-1}p^{r-2} +x^{2r-2}p^{r-1} +x^{r+1}p^{r-2}-x^rp^{r-1}\\
&= x^{2r} (p^{r-1}) + x^{2r-1} (-p^r-p^{r-2}) +x^{2r-2} (p^{r-1}) + x^{r+1} (p^{r-2}-p^{r-1}) +x^r(p^r -p^{r-1}),
\end{align*}
\begin{align*}
\frac{\beta}{x^{2r-1}p^{r-2}} &= (x^2-2x+1)[(1-r)x^{-1}p^{3-r}+(r-2)x^{-1}p^{2-r}+x^{-1}p]\\
&{\phantom{=\,}}-(p^2-2p)[x^{2-r}-rx^{2-r}+rx^{1-r}+1-2x^{1-r}]\\
&= (1-r)x^{3-r} + (r-2)xp^{2-r}+xp-2(1-r)p^{3-r}-2(r-2)p^{2-r}-2p\\
&{\phantom{=\,}}+(1-r)x^{-1}p^{3-r}+(r-2)x^{-1}p^{2-r}+x^{-1}p\\
&{\phantom{=\,}}-(p^2-1)[x^{2-r}-rx^{2-r}+rx^{1-r}+1-2x^{1-r}]\\
&{\phantom{=\,}}+2px^{2-r} -2prx^{2-r}+2prx^{1-r} +2p -4px^{1-r}\\
&= x\left( (1-r)p^{3-r}+(r-2)p^{2-r}+p \right) +1\cdot \left( (-2)(1-r)p^{3-r}-2(r-2)p^{2-r}-p^2-1 \right)\\
&{\phantom{=\,}}+x^{-1}\left( 
(1-r)p^{3-r}+(r-2)p^{2-r}+p \right) + x^{1-r}\left( -(p^2+1)(r-2)+2pr-4p \right)\\
&{\phantom{=\,}}+x^{2-r}\left( 
-(p^2-1)(1-r)+2p-2pr \right),
\end{align*}
\begin{align*}
\gamma &= (x-1)(x^{2r-1}p^{1-r}-x^r -x^{2r-2}p^{1-r}+x^rp^{-1})(rp^r-p^r-rp^{r-1}+2p^{r-1})\\
&=\left( x^{2r}p^{1-r} -x^{r+1}-x^{2r-1}p^{1-r} +x^{r+1}p^{-1}-x^{2r-1}p^{1-r}+x^r+x^{2r-2}p^{1-r}-x^rp^{-1} \right)\cdot\\
&{\phantom{=\,}}\cdot (rp^r-p^r-rp^{r-1}+2p^{r-1})\\
&=\left( x^{2r}(p^{1-r})+x^{2r-1}(-2p^{1-r})+x^{2r-2}(p^{1-r})+x^{r+1}(p^{-1}-1)+x^r(1-p^{-1})\right)\cdot\\
&{\phantom{=\,}}\cdot (rp^r-p^r-rp^{r-1}+2p^{r-1})\\
&=x^{r+1}(2rp^{r-1} -3p^{r-1} -rp^{r-2} +2p^{r-2} -rp^r +p^r)\\
&{\phantom{=\,}}+ x^r(rp^r-p^r-2rp^{r-1}+3p^{r-1}+rp^{r-2}-2p^{r-2})\\
&{\phantom{=\,}}+x^{2r-2}(pr-p-r+2) + x^{2r-1}(-2pr+2p+2r-4) +x^{2r}(pr-p-r+2).
\end{align*}
Hence,
\begin{align*}
\beta + \gamma &= x^{2r}\left( (1-r)p +(r-2) +p^{r-1} +pr-p-r+2\right)\\
&{\phantom{=\,}}+ x^{2r-1}\left( 
(-2+2r)p+4-2r-(p^2+1)p^{r-2}-2pr+2p+2r-4 \right)\\
&{\phantom{=\,}} + x^{2r-2} \left( (1-r)p +r-2+p^{r-1} +pr-p-r+2  \right)\\
&{\phantom{=\,}} + x^r \left( p^{r-2}(-p^2r+2p^2-r+2+2pr-4p)+p^{r-2}(r-2-2rp+3p+rp^2-p^2) \right)\\
&{\phantom{=\,}} + x^{r+1} \left( p^{r-2}(-p^2+rp^2-1+r+2p-2pr) +p^{r-2}(-r+2+2rp-3p-rp^2+p^2) \right)\\
&=x^{2r}(p^{r-1}) + x^{2r-1}\left(-p^{r-2}(p^2+1)\right) + x^{2r-2}(p^{r-1}) + x^rp^{r-2}(p^2-p) + x^{r+1}p^{r-2}(1-p)\\
&=\alpha .
\end{align*}
\end{proof}

\begin{theor}\label{thm:sigma}
The numbers $\sigma_{n}(\ell)$, $n$ even, $\ell=1,\ldots,r$ satisfy (for $m>2$):
\begin{eqnarray*}
\sigma_2(\ell) &=& \delta_{\ell,r}, \quad\quad
\sigma_{4}(\ell)=\begin{cases}
0 & \text{ if}\quad \ell=1, \\
(\ell-1)(p^r-p^{r-1}) & \text{ if}\quad 1<\ell<r, \\
(r-1)p^r-(r-2)p^{r-1} & \text{ if}\quad \ell=r.
\end{cases}
\\
\sigma_{2m}(\ell)&=&\begin{cases}
0 & \text{ if}\quad \ell=1, \\
p^{{\left(2 \, m - 3\right)} r -\ell {\left(m - 2\right)} + 1-m} (p^{m-1}-1) [\ell-1]_{p^{m-2}}
& \text{ if}\quad 1<\ell<r, \\
p^{{\left(m - 1\right)} r} [r-1]_{p^{m-2}} - p^{{\left(m - 1\right)} r - 1} [r-2]_{p^{m-2}}
& \text{ if}\quad \ell=r.
\end{cases}
\end{eqnarray*}
\end{theor}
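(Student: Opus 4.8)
The plan is to prove the formula by induction on the even length $n=2m$, using the recursion of Theorem \ref{thm:recusion_for_sigma_n} to pass from $\sigma_{n-2}$ to $\sigma_n$. First I would settle the base cases directly. For $n=2$ the identity $\ql c_1,c_2\qr=\begin{pmatrix} c_1c_2-1 & -c_1 \\ c_2 & -1\end{pmatrix}$ recorded in the proof of Lemma \ref{lem:sigma_const} forces $c_1=c_2=0$ and $z=-1$, so $\sigma_2(\ell)=|\sigma_{-1+p^\ell,2}|$ is nonzero precisely when $p^\ell=0$ in $R$, i.e.\ $\ell=r$; this gives $\sigma_2(\ell)=\delta_{\ell,r}$. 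Substituting these values into the recursion, together with $n_r=1$, $\mu(r)=(r-1)p^r-(r-2)p^{r-1}$ and $\mu(\ell)=(\ell-1)(p^r-p^{r-1})$ for $\ell<r$, yields the stated $\sigma_4(\ell)$ at once. One checks that $\sigma_4$ coincides with the general $\sigma_{2m}$-formula evaluated at $m=2$ under the convention $[k]_1=k$, so the general formula may serve as induction hypothesis for all smaller even lengths $\geq 4$.

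For the inductive step fix $m\geq 3$ and assume the formula for $\sigma_{n-2}=\sigma_{2(m-1)}$. The case $\ell=1$ is immediate: in the recursion the first sum is empty, the second is annihilated by $\mu(1)=0$, and the diagonal term vanishes because $\sigma_{n-2}(1)=0$; hence $\sigma_n(1)=0$. The remaining two cases are where the technical lemmas are consumed, and I expect the case $\ell=r$ to be the decisive obstacle.

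For $\ell=r$ we have $\sigma_n(r)=\sum_{j=1}^r n_j\sigma_{n-2}(j)\mu(j)$. The $j=1$ term drops, the $j=r$ term is $\sigma_{n-2}(r)\mu(r)$, and in the middle range $2\leq j\leq r-1$ I would substitute the hypothesis $\sigma_{n-2}(j)=p^{(2m-5)r-j(m-3)+2-m}(p^{m-2}-1)[j-1]_{p^{m-3}}$. Collecting the $j$-dependence---$n_j$ contributes a $p^{-j}$ which combines with the $p^{-j(m-3)}$ of the prefactor into $p^{-j(m-2)}$, while $\mu(j)$ supplies the linear factor $(j-1)$---turns the middle range into a multiple of $\sum_{j=2}^{r-1}p^{-j(m-2)}[j-1]_{p^{m-3}}(j-1)$, which is exactly Lemma \ref{lem:formula_for_sum_not_only_involving_geometric_things_but_simultaneously_also_involving_q_integers} with its parameters set to $(s,m)=(r,m-1)$. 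After clearing the denominators $p^{m-3}-1$ and $p^{m-2}-1$ arising from the two $q$-integers, and using Lemma \ref{lem:polynomial_long_division} to rewrite the resulting geometric expressions in $q$-integer form, the middle range becomes the quantity $\beta$, the boundary term $\sigma_{n-2}(r)\mu(r)$ becomes $\gamma$, and the target $\sigma_n(r)$ (times the same cleared factors) becomes $\alpha$, all evaluated at $x=p^{m-2}$ so that $x/p=p^{m-3}$. The identity $\alpha=\beta+\gamma$ of Lemma \ref{lem:alpha_beta_gamma} then closes the case verbatim. The genuine difficulty is purely bookkeeping: tracking the exponents of $p$ through these substitutions and confirming that the whole combination has been arranged to match the hypotheses of Lemma \ref{lem:alpha_beta_gamma}. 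The degenerate sub-case $m=3$, where $p^{m-3}=1$ makes Lemma \ref{lem:formula_for_sum_not_only_involving_geometric_things_but_simultaneously_also_involving_q_integers} inapplicable, I would dispose of separately by computing $\sigma_6$ directly from $\sigma_4$ via the recursion.

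Finally, the case $1<\ell<r$ runs along the same lines but splits into three contributions. The lower sum $\sum_{j=1}^{\ell-1}n_j\sigma_{n-2}(j)\mu(j)$ is handled by Lemma \ref{lem:formula_for_sum_not_only_involving_geometric_things_but_simultaneously_also_involving_q_integers} with $s=\ell$; the diagonal term $\sigma_{n-2}(\ell)$ is multiplied by the coefficient $\mu(\ell)(p-2)p^{r-\ell-1}+\sum_{j=\ell+1}^r n_j\mu(j)$, for which Lemma \ref{lem:nice_expression_for_parts_of_sigma_n} supplies the closed value $(\ell-1)p^{2r-\ell}-(2\ell-3)p^{2r-\ell-1}+(\ell-1)p^{2r-\ell-2}$; and the upper sum $\mu(\ell)\sum_{j=\ell+1}^r n_j\sigma_{n-2}(j)$ splits into a middle-range piece (again Lemma \ref{lem:formula_for_sum_not_only_involving_geometric_things_but_simultaneously_also_involving_q_integers}) plus the $j=r$ boundary term supplied by the $\ell=r$ hypothesis. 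Assembling the three pieces and simplifying with the elementary identities of Lemma \ref{lem:some_sums} collapses everything to the single expression $p^{(2m-3)r-\ell(m-2)+1-m}(p^{m-1}-1)[\ell-1]_{p^{m-2}}$, completing the induction.
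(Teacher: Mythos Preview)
Your plan is essentially the paper's own proof: induction on $m$ via the recursion of Theorem~\ref{thm:recusion_for_sigma_n}, with $\sigma_2,\sigma_4$ handled directly, $\ell=1$ killed by $\mu(1)=0$ together with $\sigma_{n-2}(1)=0$, and $\ell=r$ closed by Lemmas~\ref{lem:formula_for_sum_not_only_involving_geometric_things_but_simultaneously_also_involving_q_integers}, \ref{lem:polynomial_long_division} and the identity $\alpha=\beta+\gamma$ of Lemma~\ref{lem:alpha_beta_gamma}. The paper shifts the induction index by one, passing from $\sigma_{2m}$ to $\sigma_{2(m+1)}$ with $x=p^{m-1}$, which is exactly your $x=p^{m-2}$ after reindexing. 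Your remark that the passage $\sigma_4\to\sigma_6$ is degenerate for Lemma~\ref{lem:formula_for_sum_not_only_involving_geometric_things_but_simultaneously_also_involving_q_integers} is correct and, incidentally, a point the paper does not flag.

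Two corrections concerning the case $1<\ell<r$. First, the upper sum $\mu(\ell)\sum_{j=\ell+1}^r n_j\sigma_{n-2}(j)$ carries the \emph{constant} factor $\mu(\ell)$, not $\mu(j)$; there is no linear weight $(j-1)$ in the summand, so Lemma~\ref{lem:formula_for_sum_not_only_involving_geometric_things_but_simultaneously_also_involving_q_integers} is not the right tool there. It is a plain geometric sum, which the paper evaluates directly. Second, and more substantively, your claim that the three pieces ``collapse via Lemma~\ref{lem:some_sums}'' underestimates the remaining work. After all substitutions one is left with a rational-function identity in the variables $p,\,p^m,\,p^r,\,p^\ell,\,p^{mr},\,p^{m\ell}$; this is the equality marked $(*)$ in the paper, and it is verified either by computer algebra or by the dedicated proposition in the Appendix, not by the elementary sums of Lemma~\ref{lem:some_sums}. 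This does not invalidate your plan, but it is where the real effort in this case lies.
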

\begin{proof}
To begin with, we prove the base case. For $\ell\in \{1,\ldots, r\}$, we have 
$$|\sigma_2(\ell)| = |\{(c_1,c_2)\in I^2\ |\ [c_1,c_2] = \begin{pmatrix}
    -1+p^\ell & 0\\
    0 & \frac{1}{-1+p^\ell}
\end{pmatrix}\}|.$$
As $[c_1,c_2] = \begin{pmatrix}
    c_1c_2 -1 & -c_1\\
    c_2 & -1
\end{pmatrix}$, we deduce that $c_1=c_2=0$ such that there exists a solution if and only if $\ell =r$ in which case the solution is unique modulo $p^r$. Similarly, for $\ell\in \{1,\ldots, r\}$, we have 
$$|\sigma_4(\ell)| = |\{(c_1,c_2, c_3, c_4)\in I^4\ |\ [c_1,c_2,c_3,c_4] = \begin{pmatrix}
    1+p^\ell & 0\\
    0 & \frac{1}{1+p^\ell}
\end{pmatrix}\}|.$$
As $[c_1,c_2,c_3,c_3] = \begin{pmatrix}
    c_1c_2c_3c_4 -c_1c_4 -c_3c_4 -c_1c_2 +1 & -c_1c_2c_3 +c_1+c_3\\
    c_2c_3c_4 -c_4-c_2 & -c_2c_3 + 1
\end{pmatrix}$, we obtain
$$\begin{pmatrix}
    c_1c_2c_3c_4 -c_1c_4 -c_3c_4 -c_1c_2 +1 & c_1(c_2c_3-1)\\
    c_2(c_3c_4 -1) & -c_2c_3 + 1
\end{pmatrix} = \begin{pmatrix}
    1+p^\ell & c_3\\
    c_4 & \frac{1}{1+p^\ell}
\end{pmatrix}.$$
As $c_2c_3-1, c_3c_4-1\in R^\times$, we deduce that

\begin{equation}\label{eq:BaseCase}
c_2=\frac{c_4}{c_3c_4-1}\quad\textup{and}\quad c_1 = \frac{c_3}{c_2c_3-1}=c_3^2c_4-c_3.
\end{equation}
Hence, the two remaining equations 
$c_1c_2c_3c_4 -c_1c_4 -c_3c_4 -c_1c_2 +1=1+p^\ell$ and $-c_2c_3+1=\frac{1}{1+p^\ell}$ become both equivalent to the equation $p^\ell =-c_3c_4$. Note that any choice of $c_3$ and $c_4$ uniquely determines $c_1$ and $c_2$ modulo $p^r$ by (\ref{eq:BaseCase}). For $2\leq \ell \leq r $ the claim follows now from Lemma \ref{lem:mu_uv} and for $\ell =1$ the claim follows from the fact that $c_3, c_4\in I$.\par
\noindent Next, assume that $1<\ell<r$ and consider the right hand side of the recursion for $n=2m$. The two sums and the last summand are:
\begin{eqnarray*}
&& \sum_{j=1}^{\ell-1} n_j \sigma_{n}(j)\mu(j)\\
&=& \sum_{j=2}^{\ell-1} n_j p^{{\left(2 \, m - 3\right)} r -j {\left(m - 2\right)} + 1-m} (p^{m-1}-1) [j-1]_{p^{m-2}}(p^r-p^{r-1})(j-1) \\
&=& \sum_{j=2}^{\ell-1} p^{{\left(2 \, m - 3\right)} r -j {\left(m - 2\right)} + 1-m - j} (p^{m-1}-1) [j-1]_{p^{m-2}}(p^r-p^{r-1})^2(j-1) \\
&=& \sum_{j=2}^{\ell-1} p^{{\left(2 \, m - 3\right)} r -j {\left(m - 2\right)} + 1-m - j} (p^{m-1}-1) \frac{p^{(j-1)(m-2)}-1}{p^{m-2}-1}(p^r-p^{r-1})^2(j-1) \\
&=& \frac{p^{m-1}-1}{p^{m-2}-1} (p^r-p^{r-1})^2 p^{(2m-3)r+1-m} \sum_{j=2}^{\ell-1} p^{-j(m-1)} (p^{(j-1)(m-2)}-1) (j-1) \\
&=& \frac{p^{m-1}-1}{p^{m-2}-1} (p^r-p^{r-1})^2 p^{(2m-3)r+1-m} \biggl(
\frac{-(\ell-1)p^{4-\ell-m}+(\ell-2)p^{3-\ell-m}+p^{2-m}}{(p-1)^2} \\
&& - \frac{-(\ell-1)p^{(m-1)(2-\ell)}+(\ell-2)p^{(m-1)(1-\ell)}+1}{(p^{m-1}-1)^2}\biggr).
\end{eqnarray*}
\begin{eqnarray*}
&& \sum_{j=\ell+1}^{r} n_j \sigma_{n}(j)\mu(\ell)\\
&=& \sum_{j=\ell+1}^{r-1} n_j p^{{\left(2 \, m - 3\right)} r -j {\left(m - 2\right)} + 1-m} (p^{m-1}-1) [j-1]_{p^{m-2}}(p^r-p^{r-1})(\ell-1)\\
&&+ n_r (p^{(m-1)r}[r-1]_{p^{m-2}} - p^{(m-1)r-1}[r-2]_{p^{m-2}})(p^r-p^{r-1})(\ell-1) \\
&=& \sum_{j=\ell+1}^{r-1} p^{{\left(2 \, m - 3\right)} r -j {\left(m - 2\right)} + 1-m-j} (p^{m-1}-1) [j-1]_{p^{m-2}}(p^r-p^{r-1})^2(\ell-1)\\
&&+ (p^{(m-1)r}[r-1]_{p^{m-2}} - p^{(m-1)r-1}[r-2]_{p^{m-2}})(p^r-p^{r-1})(\ell-1) \\
&=& \sum_{j=\ell+1}^{r-1} p^{{\left(2 \, m - 3\right)} r -j {\left(m - 2\right)} + 1-m-j} (p^{m-1}-1) \frac{p^{(j-1)(m-2)}-1}{p^{m-2}-1}(p^r-p^{r-1})^2(\ell-1)\\
&&+ (p^{(m-1)r}\frac{p^{(r-1)(m-2)}-1}{p^{m-2}-1} - p^{(m-1)r-1}\frac{p^{(r-2)(m-2)}-1}{p^{m-2}-1})(p^r-p^{r-1})(\ell-1) \\
&=& \frac{p^{m-1}-1}{p^{m-2}-1} (p^r-p^{r-1})^2(\ell-1) p^{(2m-3)r+1-m} \sum_{j=\ell+1}^{r-1} p^{-j(m-1)} (p^{(j-1)(m-2)}-1)\\
&&+ (p^{(m-1)r}\frac{p^{(r-1)(m-2)}-1}{p^{m-2}-1} - p^{(m-1)r-1}\frac{p^{(r-2)(m-2)}-1}{p^{m-2}-1})(p^r-p^{r-1})(\ell-1) \\
&=& \frac{p^{m-1}-1}{p^{m-2}-1} (p^r-p^{r-1})^2(\ell-1) p^{(2m-3)r+1-m}
\left( p^{2-m}\frac{p^{-r}-p^{-\ell-1}}{p^{-1}-1}
-\frac{p^{(1-m)r}-p^{(1-m)(\ell+1)}}{p^{1-m}-1}\right) \\
&&+ (p^{(m-1)r}\frac{p^{(r-1)(m-2)}-1}{p^{m-2}-1} - p^{(m-1)r-1}\frac{p^{(r-2)(m-2)}-1}{p^{m-2}-1})(p^r-p^{r-1})(\ell-1).
\end{eqnarray*}
\begin{eqnarray*}
&& \sigma_{n}(\ell) \left(\mu(\ell)(p-2)p^{r-\ell-1} + \sum_{j=\ell+1}^{r} n_j \mu(j)\right) \\
&=& p^{{\left(2 \, m - 3\right)} r -\ell {\left(m - 2\right)} + 1-m} (p^{m-1}-1) [\ell-1]_{p^{m-2}} \left(\mu(\ell)(p-2)p^{r-\ell-1} + \sum_{j=\ell+1}^{r} n_j \mu(j)\right) \\
&=& p^{{\left(2 \, m - 3\right)} r -\ell {\left(m - 2\right)} + 1-m} (p^{m-1}-1) \frac{p^{(\ell-1)(m-2)}-1}{p^{m-2}-1} \left(\mu(\ell)(p-2)p^{r-\ell-1} + \sum_{j=\ell+1}^{r} n_j \mu(j)\right) \\
&=& p^{{\left(2 \, m - 3\right)} r -\ell {\left(m - 2\right)} + 1-m} (p^{m-1}-1) \frac{p^{(\ell-1)(m-2)}-1}{p^{m-2}-1} \left((\ell-1)p^{2r-\ell}-(2\ell-3)p^{2r-\ell-1}+(\ell-1)p^{2r-\ell-2}\right).
\end{eqnarray*}
With $x:=p^m$, $y:=p^r$, $z:=p^\ell$, $u:=p^{mr}$, $v:=p^{m\ell}$ the equations become
\begin{eqnarray*}
&& \sum_{j=1}^{\ell-1} n_j \sigma_{n}(j)\mu(j) + \sum_{j=\ell+1}^{r} n_j \sigma_{n}(j)\mu(\ell)\\
&&+ \sigma_{n}(\ell) \left(\mu(\ell)(p-2)p^{r-\ell-1} + \sum_{j=\ell+1}^{r} n_j \mu(j)\right) \\
&=& \frac{p^{m-1}-1}{p^{m-2}-1} (p^r-p^{r-1})^2 p^{(2m-3)r+1-m} \biggl(
\frac{-(\ell-1)p^{4-\ell-m}+(\ell-2)p^{3-\ell-m}+p^{2-m}}{(p-1)^2} \\
&& - \frac{-(\ell-1)p^{(m-1)(2-\ell)}+(\ell-2)p^{(m-1)(1-\ell)}+1}{(p^{m-1}-1)^2}\biggr)\\
&& +\frac{p^{m-1}-1}{p^{m-2}-1} (p^r-p^{r-1})^2(\ell-1) p^{(2m-3)r+1-m}
\left( p^{2-m}\frac{p^{-r}-p^{-\ell-1}}{p^{-1}-1}
-\frac{p^{(1-m)r}-p^{(1-m)(\ell+1)}}{p^{1-m}-1}\right) \\
&&+ (p^{(m-1)r}\frac{p^{(r-1)(m-2)}-1}{p^{m-2}-1} - p^{(m-1)r-1}\frac{p^{(r-2)(m-2)}-1}{p^{m-2}-1})(p^r-p^{r-1})(\ell-1) \\
&&+ p^{{\left(2 \, m - 3\right)} r -\ell {\left(m - 2\right)} + 1-m} (p^{m-1}-1) \frac{p^{(\ell-1)(m-2)}-1}{p^{m-2}-1} \left((\ell-1)p^{2r-\ell}-(2\ell-3)p^{2r-\ell-1}+(\ell-1)p^{2r-\ell-2}\right) \\
&=& {\left(\ell - 1\right)} {\left(y - \frac{y}{p}\right)} {\left(\frac{u {\left(\frac{p^{2} u}{x y^{2}} - 1\right)}}{y {\left(\frac{x}{p^{2}} - 1\right)}} - \frac{u {\left(\frac{p^{4} u}{x^{2} y^{2}} - 1\right)}}{p y {\left(\frac{x}{p^{2}} - 1\right)}}\right)} \\
&& +\frac{{\left(\ell - 1\right)} p u^{2} {\left(y - \frac{y}{p}\right)}^{2} {\left(\frac{p^{2} {\left(\frac{1}{y} - \frac{1}{p z}\right)}}{x {\left(\frac{1}{p} - 1\right)}} - \frac{\frac{y}{u} - \frac{p z}{v x}}{\frac{p}{x} - 1}\right)} {\left(\frac{x}{p} - 1\right)}}{x y^{3} {\left(\frac{x}{p^{2}} - 1\right)}} \\
&& + \frac{{\left(\frac{{\left(\ell - 1\right)} y^{2}}{z} - \frac{{\left(2 \, \ell - 3\right)} y^{2}}{p z} + \frac{{\left(\ell - 1\right)} y^{2}}{p^{2} z}\right)} p u^{2} z^{2} {\left(\frac{x}{p} - 1\right)} {\left(\frac{p^{2} v}{x z^{2}} - 1\right)}}{v x y^{3} {\left(\frac{x}{p^{2}} - 1\right)}} \\
&& - \frac{p u^{2} {\left(y - \frac{y}{p}\right)}^{2} {\left(\frac{x}{p} - 1\right)} {\left(\frac{\frac{{\left(\ell - 1\right)} p^{4}}{x z} - \frac{{\left(\ell - 2\right)} p^{3}}{x z} - \frac{p^{2}}{x}}{{\left(p - 1\right)}^{2}} + \frac{\frac{{\left(\ell - 2\right)} x z}{p v} - \frac{{\left(\ell - 1\right)} x^{2} z}{p^{2} v} + 1}{{\left(\frac{x}{p} - 1\right)}^{2}}\right)}}{x y^{3} {\left(\frac{x}{p^{2}} - 1\right)}} \\
&\stackrel{(*)}{=}& \frac{u^{2} {\left(x - 1\right)} z {\left(\frac{p v}{x z} - 1\right)}}{v x y {\left(\frac{x}{p} - 1\right)}} \\
&=& p^{{\left(2 \, m - 1\right)} r -\ell {\left(m - 1\right)} -m} (p^{m}-1) \frac{p^{(m-1)(\ell-1)}-1}{p^{m-1}-1} \\
&=& p^{{\left(2 \, m - 1\right)} r -\ell {\left(m - 1\right)} -m} (p^{m}-1) [\ell-1]_{p^{m-1}} = \sigma_{n+2}(\ell)
\end{eqnarray*}
where $(*)$ may be checked with a computer (e.g.\ with \textsc{Sage}) since these are rational functions with constant exponents. For the sake of completeness, we have included a proof in Appendix \ref{sec:appendix_proof_of_star}.

\indent The case $\ell=1$ follows from the fact that $\sigma_4(1)=0$ and from Theorem \ref{thm:recusion_for_sigma_n}, as $\mu(1)=0$.

\noindent Next, we consider the case $\ell =r$. Accoring to Theorem \ref{thm:recusion_for_sigma_n} we have to prove that
$$\sigma_{2(m+1)}(r):=p^{mr}[r-1]_{p^{m-1}} - p^{mr-1}[r-2]_{p^{m-1}} = \sum\limits_{j=1}^{r}n_j\sigma_{2m}(j)\mu(j).$$
We compute the right-hand side of the previous equation:
\begin{align*}
&\phantom{=\,}\sum\limits_{j=1}^{r}n_j\sigma_{2m}(j)\mu(j)= 0+ \sum\limits_{j=2}^{r-1}n_j\sigma_{2m}(j)\mu(j)\\
    &+ \left( p^{(m-1)r}[r-1]_{p^{m-2}} -p^{(m-1)r-1}[r-2]_{p^{m-2}}\right)\cdot \left( (r-1)p^r-(r-2)p^{r-1} \right)\\
    &=\sum\limits_{j=2}^{r-1}(p^{r-j}-p^{r-j-1})\cdot p^{(2m-3)r+1-m}(p^{m-1}-1)p^{-j(m-2)}[j-1]_{p^{m-2}}(j-1)(p^r-p^{r-1})\\
    &+ \left( p^{(m-1)r}[r-1]_{p^{m-2}} -p^{(m-1)r-1}[r-2]_{p^{m-2}}\right)\cdot \left( (r-1)p^r-(r-2)p^{r-1} \right)\\
    &={(p^r-p^{r-1})}^2\cdot p^{(2m-3)r+1-m}(p^{m-1}-1)\sum\limits_{j=2}^{r-1}p^{-j(m-1)}[j-1]_{p^{m-2}}(j-1)\\
    &+ \left( p^{(m-1)r}[r-1]_{p^{m-2}} -p^{(m-1)r-1}[r-2]_{p^{m-2}}\right)\cdot \left( (r-1)p^r-(r-2)p^{r-1} \right)\\
    &\stackrel{\textup{Lemma} \ref{lem:formula_for_sum_not_only_involving_geometric_things_but_simultaneously_also_involving_q_integers}}{=} \frac{{(p-1)}^2(p^{m-1}-1)p^{2mr-r-m-1}}{p^{m-2}-1}\left( \frac{(1-r)p^{4-r-m}+(r-2)p^{3-r-m}+p^{2-m}}{{(p-1)}^2}\right.\\
&\left.- \frac{(1-r)p^{(m-1)(2-r)} + (r-2)p^{(m-1)(1-r)}+1}{{(p^{m-1}-1)}^2} \right)\\
    &+ \left( p^{(m-1)r}[r-1]_{p^{m-2}} -p^{(m-1)r-1}[r-2]_{p^{m-2}}\right)\cdot \left( (r-1)p^r-(r-2)p^{r-1} \right)\\
&\stackrel{\textup{Lemma} \ref{lem:polynomial_long_division}}{=} \frac{{(p-1)}^2(p^{m-1}-1)p^{2mr-r-m-1}}{p^{m-2}-1}\left( \frac{(1-r)p^{4-r-m}+(r-2)p^{3-r-m}+p^{2-m}}{{(p-1)}^2}\right.\\
&\left.- \frac{(1-r)p^{(m-1)(1-r)} + p^{(m-1)(1-r)}\cdot [r-1]_{p^{m-1}}}{{(p^{m-1}-1)}} \right)\\
    &+ \left( p^{(m-1)r}[r-1]_{p^{m-2}} -p^{(m-1)r-1}[r-2]_{p^{m-2}}\right)\cdot \left( (r-1)p^r-(r-2)p^{r-1} \right)\\
    &=\frac{p^{2mr-r-m-1}}{p^{m-2}-1}\left( (p^{m-1}-1)[(1-r)p^{4-r-m}+(r-2)p^{3-r-m}+p^{2-m}]\right.\\
    &\left. - {(p-1)}^2 [(1-r)p^{(m-1)(1-r)} + p^{(m-1)(1-r)}[r-1]_{p^{m-1}})]\right)\\
    &+ \left( p^{(m-1)r}[r-1]_{p^{m-2}}-p^{(m-1)r-1}[r-2]_{p^{m-2}}\right)\cdot \left( (r-1)p^r-(r-2)p^{r-1} \right).
\end{align*}
Next, we set $x:=p^{m-1}$. Then, our equation becomes
\begin{align*}
    {(px)}^r[r-1]_x - \frac{{(px)}^r}{p}[r-2]_x &= \frac{x^{2r}p^{r-2}x^{-1}}{\frac{x}{p}-1}\cdot \left((x-1)[(1-r)x^{-1}p^{3-r}+(r-2)x^{-1}p^{2-r}+x^{-1}p]\right.\\
&\left. -{(p-1)}^2[(1-r)x^{1-r}+x^{1-r}[r-1]_x]\right)\\
    &+\left(x^r[r-1]_{\frac{x}{p}} - \frac{x^r}{p}[r-2]_{\frac{x}{p}}\right)\cdot \left((r-1)p^r -(r-2)p^{r-1}\right).
\end{align*}
Now, we replace the $q$-integers by fractions. Then, the last equation is equivalent to:
\begin{align*}
p^rx^r\frac{x^{r-1}-1}{x-1} -p^{r-1}x^r\frac{x^{r-2}-1}{x-1} &= \frac{x^{2r-1}p^{r-2}}{\frac{x}{p}-1}\cdot \left((x-1)[(1-r)x^{-1}p^{3-r}+(r-2)x^{-1}p^{2-r}+x^{-1}p]\right.\\
&\left. -{(p-1)}^2[(1-r)x^{1-r}+x^{1-r}\frac{x^{r-1}-1}{x-1}]\right)\\
    &+\left(x^r\frac{{(\frac{x}{p})}^{r-1}-1}{\frac{x}{p}-1} - \frac{x^r}{p}\cdot \frac{{(\frac{x}{p})}^{r-2}-1}{\frac{x}{p}-1}\right)\cdot \left( (r-1)p^r -(r-2)p^{r-1} \right).
\end{align*}
Next, we multiply both sides of the last equation by $(x-1)\cdot (\frac{x}{p}-1)$ and obtain:
\begin{align*}
&{\phantom{=}} p^rx^r(x^{r-1}-1)(\frac{x}{p}-1)-p^{r-1}x^r(x^{r-2}-1)(\frac{x}{p}-1)= x^{2r-1}p^{r-2}\cdot\\
&\cdot \left( (1-r)[x^{-1}p^{3-r} + (r-2)x^{-1}p^{2-r} +x^{-1}p] - {(p-1)}^2[(x-1)(1-r)x^{1-r}+x^{1-r}(x^{r-1}-1)]\right)\\
&+ (x-1)\left( x^r\left( 
 \left(\frac{x}{p} \right)^{r-1}-1 \right) - \frac{x^r}{p}\left( 
 \left(\frac{x}{p} \right)^{r-2}-1 \right) \right)\cdot \left( (r-1)p^r -(r-2)p^{r-1} \right).
\end{align*}
The correctness of this last equation follows from Lemma \ref{lem:alpha_beta_gamma}.
\end{proof}

\begin{theor}\label{thm:nfriezes_even}
Let $R=\mathbb{Z}/p^r\mathbb{Z}$ for a prime $p$ and $r\in\mathbb{N}$, $I=pR$ the maximal ideal, $n\in\mathbb{N}_{>1}$ with $n$ even, and $\varepsilon\in\{\pm 1\}$.
Let $\omega_n$ be the number of $\varepsilon$-quiddity cycles over $R/I$ of length $n$.
If $p>2$ then the number of $\varepsilon$-quiddity cycles over $R$ is
$$
\begin{cases}
(\omega_n-1) \cdot p^{(r-1)(n-3)} + \sigma_n(r) & \quad \text{if}\quad \varepsilon=(-1)^{n/2}, \\
\omega_n \cdot p^{(r-1)(n-3)} & \quad \text{if}\quad \varepsilon=-(-1)^{n/2}.
\end{cases}
$$
If $p=2$ then the number of $\varepsilon$-quiddity cycles over $R$ is
$$
\begin{cases}
(\omega_n-1) \cdot p^{(r-1)(n-3)} + \sigma_n(r) & \quad \text{if}\quad \varepsilon=(-1)^{n/2}, \\
(\omega_n-1) \cdot p^{(r-1)(n-3)} & \quad \text{if}\quad \varepsilon=-(-1)^{n/2}.
\end{cases}
$$
\end{theor}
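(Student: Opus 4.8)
The plan is to mimic the proof of Theorem \ref{thm:nfriezesI}: project an $\varepsilon$-quiddity cycle over $R$ to a $\kappa(\varepsilon)$-quiddity cycle over the residue field $R/I=\mathbb{F}_p$ via the canonical map $\kappa$, and count lifts fiberwise. Since $R/I$ is a field, every residue sequence either contains a unit or is the all-zero sequence $(0,\ldots,0)$; these two situations behave very differently, and the whole statement comes down to treating them separately and then reassembling. The number of residue cycles of each type is controlled by $\omega_n$ together with Equation \ref{eq:00n}, and the lifts of the all-zero cycle are exactly the objects counted by $\sigma_{z,n}$.

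For a residue cycle $\underline c=(\overline{c_1},\ldots,\overline{c_n})$ that contains a unit, I would observe that the counting argument in the proof of Theorem \ref{thm:nfriezesI} never uses the parity of $n$: after rotating so that $\overline{c_2}\in(R/I)^\times$, a lift $(c_1,\ldots,c_n)$ with $\ql c_1,\ldots,c_n\qr=\lambda_\varepsilon$ is determined by choosing $c_4,\ldots,c_n$ freely among the $|I|^{n-3}$ lifts of $\overline{c_4},\ldots,\overline{c_n}$, after which $c_1,c_2,c_3$ are forced by the identity $\eta(c_1)\eta(c_2)\eta(c_3)=\varepsilon\ql c_4,\ldots,c_n\qr^{-1}$ (with $h\in R^\times$ guaranteed by the footnote there, which depends only on the residue). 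Hence every such residue cycle lifts to exactly $|I|^{n-3}=p^{(r-1)(n-3)}$ cycles over $R$, for either value of $\varepsilon$.

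It then remains to count lifts of the all-zero residue cycle. By Equation \ref{eq:00n}, $\ql 0,\ldots,0\qr=\lambda_{(-1)^{n/2}}$ for even $n$, so $(0,\ldots,0)$ is a $\kappa(\varepsilon)$-quiddity cycle over $R/I$ precisely when $(-1)^{n/2}\equiv\varepsilon\pmod I$, and its lifts to $\varepsilon$-quiddity cycles over $R$ are exactly the all-$I$ sequences in $\sigma_{\varepsilon,n}$. When $\varepsilon=(-1)^{n/2}$ this count is $|\sigma_{(-1)^{n/2},n}|=\sigma_n(r)$, using $p^r=0$ in $R$. When $\varepsilon=-(-1)^{n/2}$ and $p>2$, the residue $(-1)^{n/2}$ differs from $\kappa(\varepsilon)$, so the all-zero cycle is not among the $\omega_n$ residue quiddity cycles and contributes nothing. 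When $\varepsilon=-(-1)^{n/2}$ and $p=2$, the all-zero cycle does lie among the $\omega_n$ cycles (both $\pm1$ reduce to $\overline 1$), but its lifts number $|\sigma_{-(-1)^{n/2},n}|$, which by Lemma \ref{lem:sigma_const} equals $\sigma_n(1)$ since $\nu_p(-(-1)^{n/2}-(-1)^{n/2})=1$, and Theorem \ref{thm:sigma} gives $\sigma_n(1)=0$.

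Assembling these, I would split $\omega_n$ as the number of residue cycles carrying a unit plus the indicator of whether the all-zero cycle is a $\kappa(\varepsilon)$-quiddity cycle: for $p>2$ with $\varepsilon=(-1)^{n/2}$, and for $p=2$ in both cases, the all-zero cycle is counted in $\omega_n$, leaving $\omega_n-1$ unit-bearing cycles, whereas for $p>2$ with $\varepsilon=-(-1)^{n/2}$ all $\omega_n$ cycles carry a unit. Multiplying the unit-bearing count by $p^{(r-1)(n-3)}$ and adding the all-zero contribution ($\sigma_n(r)$, or $0$, or none) yields the four cases of the statement. The main obstacle is the $p=2$ analysis: there the two values of $\varepsilon$ become indistinguishable modulo $I$, so the all-zero residue cycle is always counted in $\omega_n$ regardless of $\varepsilon$, and one must combine Lemma \ref{lem:sigma_const} with the vanishing $\sigma_n(1)=0$ from Theorem \ref{thm:sigma} to see that for $\varepsilon=-(-1)^{n/2}$ this cycle contributes no lifts, producing the $(\omega_n-1)\cdot p^{(r-1)(n-3)}$ term rather than a $\sigma_n(r)$ correction.
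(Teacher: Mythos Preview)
Your argument is correct and follows essentially the same route as the paper's own proof: project via $\kappa$, split residue cycles into the all-zero cycle versus those containing a unit, lift the latter exactly as in Theorem~\ref{thm:nfriezesI}, and identify the all-zero fiber with $\sigma_{\varepsilon,n}$. Your treatment of the $p=2$, $\varepsilon=-(-1)^{n/2}$ case is in fact more explicit than the paper's, which simply asserts that there is no lift; your justification via Lemma~\ref{lem:sigma_const} and the vanishing $\sigma_n(1)=0$ from Theorem~\ref{thm:sigma} is a clean way to close that gap.
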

\begin{proof}
We proceed as in the proof of Theorem \ref{thm:nfriezesI}:
If $\kappa : R \longrightarrow R/I$ is the canonical projection and $(c_1,\ldots,c_n)$ is an $\varepsilon$-quiddity cycle, then
$(\kappa(c_1),\ldots,\kappa(c_n))$ is a $\kappa(\varepsilon)$-quiddity cycle over $R/I$.
For each fixed $\varepsilon$-quiddity cycle $\underline{c}=(\overline{c_1},\ldots,\overline{c_n})\in (R/I)^n$, we count the number of $\varepsilon$-quiddity cycles
which project to this cycle under $\kappa$.

Since $I$ is maximal, $R/I$ is a field. Assume first that $\underline{c}$ contains no unit, i.e.\ $\underline{c}=(0,\ldots,0)$. Then
the number of cycles which map to $\underline{c}$ under $\kappa$ is $\sigma_{n}(r)$ if $\varepsilon=(-1)^{n/2}$. If $\varepsilon\ne (-1)^{n/2}$, then there is no such solution.

Otherwise $\underline{c}\ne(0,\ldots,0)$.
There are $\omega_n-1$ such cases if $\varepsilon=(-1)^{n/2}$;
there are $\omega_n$  such cases if $\varepsilon=-(-1)^{n/2}$ and $p>2$; and $\omega_n-1$ such cases if $\varepsilon=-(-1)^{n/2}$ and $p=2$ (note that $\omega_n$ also counts $(0,\ldots,0)$ in this case):
we may assume that there is an entry in $\underline{c}$ which is a unit, after rotating the cycle, without loss of generality $\overline{c_2}\in (R/I)^\times$.
Now the same argument as in the proof of Theorem \ref{thm:nfriezesI} produces $|I|^{n-3}=p^{(r-1)(n-3)}$ different $\varepsilon$-quiddity cycles of length $n$ in each case.
\end{proof}

\section{Cycles in residue class rings}\label{sec:recursion}
\newcommand{\udi}[1]{(#1)}

In this last section we present a recursion for the number of solutions
\begin{equation}
\pi_{x,n} := |\{ (c_1,\ldots,c_n)\in R^n \mid  \ql c_1,\ldots,c_n\qr=A \text{ and } x=c_2 \}|.
\end{equation}
for $R:=\ZZ/p^r\ZZ$, $A\in R^{2\times 2}$, and $n\ge 3$.
A sum over all $x\in R$ then gives a recursion for the number of solutions $\ql c_1,\ldots,c_n\qr=A$.

As an application one could recover the formulae from the previous sections; however, all in all this would result in a longer proof than before. Since this section is less important, we omit some of the technical proofs and leave them to the reader.
\medskip

To count the different cases we need the following numbers.
\begin{defin}
For $x,u\in R$, let
\begin{eqnarray}
\xi_{x,u} &:=& \begin{cases} p^{\nu_p(u)} & \nu_p(u)\le \nu_p(x+1) \\
                           0 & \nu_p(u) > \nu_p(x+1) \end{cases}, \\
\zeta_{x,u} &:=& \begin{cases} 0 & x+u\in R^\times \text{ or } x\notin R^\times \\
                       rp^r-(r-1)p^{r-1} & x+u = 0 \\
                       (p^r-p^{r-1})\nu_p(x+u) & \nu_p(x+u)>0 \text{ and } x+u\ne 0 \end{cases}
\end{eqnarray}
where $\nu_p(a)$ is the largest $k$ such that $a\equiv 0 \md{p^k}$.
\end{defin}

We obtain:
\begin{propo}\label{prop:recursion}
For $u\in R$ and $n\ge 5$, we have the recursion
\begin{equation}
\pi_{u,n} = \sum_{x\in R^\times} \pi_{x,n-1} \xi_{x,u} + \pi_{x,n-2} \zeta_{x,u}.
\end{equation}
\end{propo}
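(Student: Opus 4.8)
The plan is to fix the second entry $c_2=u$ and to split the solutions $(c_1,\ldots,c_n)$ of $\ql c_1,\ldots,c_n\qr=A$ according to whether $c_2c_3-1=uc_3-1$ is a unit; these two alternatives will produce the two terms of the recursion. Throughout I use that $R=\ZZ/p^r\ZZ$ is local, so an element is a unit exactly when it is nonzero modulo $p$.

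\emph{Case A ($uc_3-1\in R^\times$).} Writing $x:=uc_3-1$, Equation \eqref{eq:red43} (whose only hypothesis is $uc_3-1\in R^\times$) applied to the first four entries gives
\[
\ql c_1,u,c_3,c_4,c_5,\ldots,c_n\qr=\ql c_1+\frac{1-c_3}{x},\,x,\,c_4+\frac{1-u}{x},\,c_5,\ldots,c_n\qr,
\]
a length-$(n-1)$ solution whose second entry is the unit $x$. The assignment $(c_1,\ldots,c_n)\mapsto(\text{reduced sequence},c_3)$ is a bijection onto the set of pairs consisting of a length-$(n-1)$ solution with second entry $x$ together with a scalar $c_3$ satisfying $uc_3-1=x$: the inverse recovers $c_1$ and $c_4$ uniquely from the reduced first and third entries and the chosen $c_3$, and fixes $c_5,\ldots,c_n$. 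For fixed units $x,u$ the admissible $c_3$ are the solutions of the linear equation $uc_3=x+1$ in $R$, of which there are $p^{\nu_p(u)}$ when $\nu_p(u)\le\nu_p(x+1)$ and none otherwise; this is exactly $\xi_{x,u}$. Summing over $x\in R^\times$ yields $\sum_{x}\pi_{x,n-1}\xi_{x,u}$.

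\emph{Case B ($uc_3-1\in I$).} Locality forces $u,c_3\in R^\times$, and the length-$5$ reduction \eqref{eq:red53N}, applied to the first five entries with $x=\frac{(c_3c_4-1)(uc_3-1)-1}{c_3}$, produces a length-$(n-2)$ solution with second entry $x$. Reducing modulo $p$ gives $c_3\equiv u^{-1}$ and $x\equiv -c_3^{-1}$, so $x$ is a unit with $x+u\in I$; in particular the hypothesis $v,x\in R^\times$ of \eqref{eq:red53N} holds automatically. As before one obtains a bijection between Case-B solutions and pairs consisting of a length-$(n-2)$ solution with second entry $x$ together with a pair $(c_3,c_4)$ realizing the reduction, with $c_1,c_5$ recovered uniquely and $c_6,\ldots,c_n$ preserved. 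The key simplification is that, setting $w:=uc_3-1\in I$ (so $1+w=uc_3$), the defining equation $(c_3c_4-1)w=c_3x+1$ expands to $c_3c_4w=c_3x+(1+w)=c_3(x+u)$, whence $wc_4=x+u$ since $c_3\in R^\times$. As $c_3$ runs over the coset $u^{-1}+I$ the element $w=uc_3-1$ runs bijectively over $I$, so the number of admissible $(c_3,c_4)$ equals $|\{(w,c_4)\in I\times R\mid wc_4=x+u\}|$. A direct $p$-valuation count, as in Lemma \ref{lem:mu_uv} but now with the second factor ranging over all of $R$ (equivalently, splitting off the $p^r-p^{r-1}$ solutions with $c_4\in R^\times$), evaluates this to $rp^r-(r-1)p^{r-1}$ when $x+u=0$, to $(p^r-p^{r-1})\nu_p(x+u)$ when $0<\nu_p(x+u)<r$, and to $0$ when $x+u\in R^\times$ — that is, to $\zeta_{x,u}$. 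Summing over $x\in R^\times$ yields $\sum_{x}\pi_{x,n-2}\zeta_{x,u}$.

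Adding the two cases gives the recursion; the hypothesis $n\ge5$ guarantees that \eqref{eq:red53N} has five entries to act on and that the reduced lengths $n-1,n-2\ge3$ stay in range. The main obstacle I anticipate is Case B: one must verify carefully that the length-$5$ reduction is invertible in the stated sense (that $c_1,c_5$ are recovered and the tail $c_6,\ldots,c_n$ is left untouched), carry out the algebraic collapse to $wc_4=x+u$ without slips, and redo the valuation bookkeeping of Lemma \ref{lem:mu_uv} for a factor ranging over the whole ring rather than $I$. Case A and its matching with $\xi_{x,u}$ are comparatively routine.
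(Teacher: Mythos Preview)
Your proof is correct and follows essentially the same two-case reduction as the paper: split on whether $uc_3-1$ is a unit, apply \eqref{eq:red43} or \eqref{eq:red53N} accordingly, and count preimages to recover $\xi_{x,u}$ and $\zeta_{x,u}$. In fact your Case~B is more explicit than the paper's, which merely asserts the counts for $(u,v,b)$; your algebraic collapse to $wc_4=x+u$ with $w=uc_3-1$ and the ensuing valuation count make that step transparent (one nitpick: in Case~A your phrase ``for fixed units $x,u$'' should read ``for fixed $x\in R^\times$ and $u\in R$'', since $u$ need not be a unit there).
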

\begin{proof}
Let $(c,u,v,b,d,\ldots)$ be a sequence of length $n$.
There are two cases:
\begin{enumerate}
\item[(a)] $uv-1$ is a unit. Then we can reduce the sequence to a sequence of length $n-1$ via
\[ [c,u,v,b,\ldots] =
  [c+(1-v)/(uv-1), uv-1, b+(1-u)/(uv-1),\ldots] \]
\item[(b)] $uv-1$ is not a unit. Then $v$ is a unit and we can reduce the sequence to a sequence of length $n-2$ via
\[ [c,u,v,b,d,\ldots] =
  [c-(vb-2)/x, x, d-(uv-2)/x,\ldots] \]
for $x = ((vb-1)(uv-1)-1)/v$.
\end{enumerate}
Thus every sequence comes from a shorter sequence after a step of type (a) or (b):

Assume we have a sequence of length $n-1$ with second entry $x\in R^\times$. If this sequence was obtained via a reduction as in (a), then $x=uv-1$ for some $u,v\in R$.
If the longer sequence is counted by $\pi_{u,n}$, then $\nu_p(u) > \nu_p(x+1)$ is excluded, and if $\nu_p(u)\le \nu_p(x+1)$ then there are $p^{\nu_p(u)}$ possible $u\in R$ that satisfy $x=uv-1$. This explains the summand $\pi_{x,n-1} \xi_{x,u}$ in the recursion.

Now assume that we have a sequence of length $n-2$ with second entry $x\in R^\times$. If this sequence was obtained via a reduction as in (b), then $x=((vb-1)(uv-1)-1)/v$ for some $u,v,b\in R$ such that $uv-1$ is not a unit.
In particular, $x+u=b(uv-1)$ is not a unit. If $x+u=0$, then there are $rp^r-(r-1)p^{r-1}$ triple $u,v,b$ that satisfy the relations; if $\nu_p(x+u)>0$ and $x+u\ne 0$, then we have $(p^r-p^{r-1})\nu_p(x+u)$ solutions. This is why the summand $\pi_{x,n-2} \zeta_{x,u}$ appears in the recursion.

Note that the cases (a) and (b) always produce a unit at the second position in the shorter sequence, thus we may sum over $x\in R^\times$ in the recursion.
\end{proof}

\begin{defin}
We use the sets
\begin{eqnarray}
u_{d,r} &:=& \{d\}, \\
u_{d,i} &:=& \{u \mid u\equiv d \md{p^i}, u\not\equiv d \md{p^{i+1}} \}, \\
e &:=& \{u \mid u \not\equiv -1,0,1 \md{p}\},
\end{eqnarray}
for $d=-1,0,1$ and $i=1,\ldots,r-1$.
Since $R = e \cup \bigcup_{d,i} u_{d,i}$ is a disjoint union, we obtain an equivalence relation on $R$.
For $u\in R$, denote $\udi{u}$ the class of $u$ with respect to this relation, and
write
\begin{eqnarray}
\pi_{\udi{x},n} &:=& \sum_{u\in \udi{x}} \pi_{u,n}, \\
\xi_{\udi{x},\udi{u}} &:=& \sum_{v\in \udi{x}, w\in \udi{u}} \xi_{v,w}, \\
\zeta_{\udi{x},\udi{u}} &:=& \sum_{v\in \udi{x}, w\in \udi{u}} \zeta_{v,w}.
\end{eqnarray}
\end{defin}

The following proposition and corollary explain why these definitions are useful. We omit the proofs since they do not give any relevant new insights.

\begin{propo}
For $x,u\in R$ and $y_1,y_2\in\udi{x}$,
$$\sum_{z\in \udi{u}} \xi_{y_1,z} = \sum_{z\in \udi{u}} \xi_{y_2,z}, \quad\quad
\sum_{z\in \udi{u}} \zeta_{y_1,z} = \sum_{z\in \udi{u}} \zeta_{y_2,z}.$$
\end{propo}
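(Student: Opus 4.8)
The plan is to reduce the whole statement to a single structural fact about the equivalence relation: \emph{on each class $C$ (one of $e$ or $u_{d,i}$) the three valuations $y\mapsto\nu_p(y)$, $y\mapsto\nu_p(y-1)$, $y\mapsto\nu_p(y+1)$ are constant.} I would prove this directly from the definitions. If $y\in e$, then $y\not\equiv -1,0,1\pmod p$, so all three valuations vanish. If $y\in u_{d,i}$, then $\nu_p(y-d)=i$ by definition, while for each other shift $e'\in\{-1,0,1\}\setminus\{d\}$ we have $y-e'\equiv d-e'\pmod p$ with $d-e'\in\{\pm1,\pm2\}$, a unit for odd $p$, whence $\nu_p(y-e')=0$. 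For $p=2$ the residues $1$ and $-1$ agree mod $2$, so there I would invoke the convention implicit in the disjointness claim that assigns each odd $y\ne\pm1$ to the class $u_{d,i}$ with $i=\nu_2(y-d)$ maximal; on such a class $\nu_2(y-d)=i$ and $\nu_2(y+d)=1$ are both constant. This lemma is the real content of the definition and makes the proposition almost formal.

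For the $\xi$-identity I would first observe that $\nu_p(z)$ is constant on $\udi{u}$, say equal to $j$ (namely $j=i$ when $\udi{u}=u_{0,i}$ and $j=0$ otherwise, since all other classes avoid $0$ modulo $p$). Hence $\sum_{z\in\udi{u}}\xi_{y,z}=p^{\,j}\,|\udi{u}|$ if $j\le\nu_p(y+1)$ and $0$ otherwise. Since $\nu_p(y+1)$ is constant on $\udi{x}$ by the structural lemma, this value agrees for $y_1$ and $y_2$, which is exactly the first claim.

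For the $\zeta$-identity I would split off the trivial case: if $y\notin R^\times$, equivalently $\nu_p(y)>0$, which is a condition on $\udi{x}$, then $\zeta_{y,z}=0$ for all $z$ and the sum vanishes. So assume $y\in R^\times$; there $\zeta_{y,z}=g(\nu_p(y+z))$ for the function defined by $g(0)=0$, $g(k)=(p^r-p^{r-1})k$ for $0<k<r$, and $g(r)=rp^r-(r-1)p^{r-1}$. It then suffices to show that the counts $N_k(y):=|\{z\in\udi{u}:\nu_p(y+z)=k\}|$ depend on $y$ only through $\udi{x}$, since $\sum_{z\in\udi{u}}\zeta_{y,z}=\sum_k g(k)N_k(y)$. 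Writing $\udi{u}=u_{d'',i''}$ with centre $d''$ and setting $m:=\nu_p(y+d'')=\nu_p(y-(-d''))$, which is constant on $\udi{x}$ by the lemma, I would use $y+z=(y+d'')+(z-d'')$ together with $\nu_p(z-d'')=i''$. The ultrametric then gives two regimes: if $m\ne i''$, then $\nu_p(y+z)=\min(m,i'')$ for every $z\in\udi{u}$, so $N_k(y)=|\udi{u}|\,[\,k=\min(m,i'')\,]$; if $m=i''$, then writing $z-d''=\alpha p^{i''}$ and $y+d''=\beta p^{i''}$ with $\alpha,\beta$ units gives $y+z=(\alpha+\beta)p^{i''}$, so $N_k(y)$ becomes the number of units $\alpha$ modulo $p^{\,r-i''}$ with $\nu_p(\alpha+\beta)=k-i''$. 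The classes $\udi{u}=e$ and the singletons $\{d''\}$ I would dispatch by the same bookkeeping, which is even easier. In each case $N_k(y)$ is determined by $\udi{x}$ and $\udi{u}$.

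The main obstacle is the resonant case $m=i''$, where $\nu_p(y+z)$ genuinely varies over $\udi{u}$ and one must check that the multiset $\{\nu_p(y+z):z\in\udi{u}\}$ is independent of the particular unit $\beta$. This is the same phenomenon resolved by the explicit bijection in the proof of Lemma~\ref{lem:sigma_const}, and the cleanest finish is to exhibit a valuation-preserving bijection: for representatives $y_1,y_2\in\udi{x}$ with associated units $\beta_1,\beta_2$, the map $\alpha\mapsto(\beta_2/\beta_1)\alpha$ permutes the units modulo $p^{\,r-i''}$ and satisfies $\nu_p(\alpha+\beta_1)=\nu_p((\beta_2/\beta_1)\alpha+\beta_2)$, matching the count for $y_1$ with that for $y_2$. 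All remaining care is the $p=2$ bookkeeping around $1\equiv-1$, which the structural lemma already absorbs.
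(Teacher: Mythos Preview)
The paper omits its own proof of this proposition, so there is no authorial argument to compare against. Your approach is sound and, for $p\ge 3$, constitutes a complete proof once two small points are tightened.

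Your structural lemma---that $\nu_p(y)$, $\nu_p(y-1)$, $\nu_p(y+1)$ are each constant on every class---is exactly the right reduction, and your derivation of the $\xi$-identity from it is clean: both $\nu_p(z)$ (constant on $(u)$) and $\nu_p(y+1)$ (constant on $(x)$) are class invariants, so each term $\xi_{y,z}$ is already constant in $y$. For the $\zeta$-identity with $(u)=u_{d'',i''}$, your decomposition $y+z=(y+d'')+(z-d'')$ together with the multiplicative bijection $\alpha\mapsto(\beta_2/\beta_1)\alpha$ in the resonant case $m=i''$ is correct; the identity $(\beta_2/\beta_1)\alpha+\beta_2=(\beta_2/\beta_1)(\alpha+\beta_1)$ makes the valuation-preservation immediate.

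The one place that needs more than ``even easier'' is $(x)=(u)=e$, relevant only for $p\ge 5$. There is no centre $d''$, and resonance does occur. The direct argument: for $y\in e$ the residue $-y\pmod p$ also avoids $\{-1,0,1\}$, so $\{z\in e:\nu_p(y+z)\ge 1\}=\{z\in R:z\equiv -y\pmod p\}$, and as $z$ ranges over this coset $y+z$ ranges over all of $pR$. Hence $N_k(y)=n_k$ for every $k\ge 1$, independent of $y\in e$. This replaces your hand-wave.

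Your caution about $p=2$ is warranted: the paper's claim of a disjoint union fails there (for instance $u_{1,2}\subseteq u_{-1,1}$ in $\mathbb{Z}/2^r\mathbb{Z}$), so the equivalence classes are not well-defined as written. This is a defect in the paper's setup rather than in your argument, and no ``convention implicit in the disjointness claim'' will repair it without redefining the partition.
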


\begin{corol}
For $x,u\in R$, we have $\pi_{\udi{x},n} = |\udi{x}|\cdot \pi_{x,n}$ and
\begin{equation}
\pi_{\udi{u},n} = \sum_{\udi{x},\:\: x\in R^\times} \frac{1}{|\udi{x}|} (\pi_{\udi{x},n-1} \xi_{\udi{x},\udi{u}} + \pi_{\udi{x},n-2} \zeta_{\udi{x},\udi{u}}).
\end{equation}
\end{corol}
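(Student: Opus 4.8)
The first identity $\pi_{\udi{x},n}=|\udi{x}|\cdot\pi_{x,n}$ is equivalent to the assertion that the function $u\mapsto\pi_{u,n}$ is constant on each equivalence class, and once this is known the second identity is essentially bookkeeping on top of the Proposition preceding the Corollary. The plan is therefore to prove the class-invariance of $\pi_{u,n}$ by induction on $n$, feeding on the recursion of Proposition \ref{prop:recursion}, and then to assemble the class-level recursion by summing that recursion over a class.

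For the inductive step I would fix $n\ge 5$ and two elements $u,u'$ with $\udi{u}=\udi{u'}$, write out Proposition \ref{prop:recursion}, and group the summation index $x\in R^\times$ according to its class $\udi{x_0}$. The induction hypothesis then lets me replace $\pi_{x,n-1}$ and $\pi_{x,n-2}$ by their class-constant values $\pi_{x_0,n-1}$, $\pi_{x_0,n-2}$, so that
\[
\pi_{u,n}=\sum_{\udi{x_0},\,x_0\in R^\times}\Bigl(\pi_{x_0,n-1}\sum_{x\in\udi{x_0}}\xi_{x,u}+\pi_{x_0,n-2}\sum_{x\in\udi{x_0}}\zeta_{x,u}\Bigr),
\]
and it remains to see that the two inner sums are unchanged when $u$ is replaced by $u'$. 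For $\xi$ this is immediate, since $\xi_{x,u}$ depends only on $\nu_p(u)$ and on $\nu_p(x+1)$, each of which is constant on the class of $u$, respectively of $x$. For $\zeta$ I would split into two cases: if $\udi{u}$ is a non-unit class then every $x\in\udi{x_0}$ is a unit, hence $x+u\in R^\times$ and $\zeta_{x,u}=0$, so the inner sum vanishes identically; if $\udi{u}$ is a unit class then $\zeta$ is symmetric on units, $\zeta_{x,u}=\zeta_{u,x}$, and the preceding Proposition (applied with the two indices interchanged) gives $\sum_{x\in\udi{x_0}}\zeta_{u,x}=\sum_{x\in\udi{x_0}}\zeta_{u',x}$, which transposes back to the desired equality.

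With class-invariance in hand, the second identity follows by summing Proposition \ref{prop:recursion} over $u\in\udi{u}$, exchanging the order of summation, and grouping $x$ by classes: the preceding Proposition shows that $\sum_{w\in\udi{u}}\xi_{x,w}$ is independent of $x\in\udi{x_0}$ and therefore equals $\tfrac{1}{|\udi{x_0}|}\,\xi_{\udi{x_0},\udi{u}}$ (and likewise for $\zeta$), while $\sum_{x\in\udi{x_0}}\pi_{x,n-1}=\pi_{\udi{x_0},n-1}$ by the first identity; collecting the factors $1/|\udi{x_0}|$ reproduces the stated recursion. I expect the genuine obstacle to be the \emph{base case} of the induction rather than the step: for small $n$ the number $\pi_{u,n}$ records the fixed target matrix $A$ very directly (already $\pi_{u,3}$ is supported on the single value $u=-A_{22}$) and need not be constant on classes, so one must pin down the precise length from which class-invariance first holds and verify it there by hand. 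It is worth noting that the second identity can in fact be derived directly from the preceding Proposition without ever invoking class-invariance, which is the more robust route should the base case prove troublesome.
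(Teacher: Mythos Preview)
The paper explicitly omits the proof of this corollary (and of the proposition immediately preceding it), so there is no argument to compare against; I can only evaluate your proposal on its own merits.

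Your derivation of the \emph{second} identity is correct and is almost certainly what the authors have in mind. Two small corrections: the equality $\sum_{x\in\udi{x_0}}\pi_{x,n-1}=\pi_{\udi{x_0},n-1}$ is simply the \emph{definition} of $\pi_{\udi{x_0},n-1}$, not a consequence of the first identity; and this is precisely why, as you observe at the end, the recursion at the class level follows from Proposition~\ref{prop:recursion} and the preceding proposition alone, for all $n\ge 5$, with no appeal to class-invariance of $\pi_{x,n}$.

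Your inductive step for the \emph{first} identity is also sound. One imprecision: $\nu_p(x+1)$ is \emph{not} constant on every class when $p=2$ (on $u_{1,1}=\{x\equiv 3\bmod 4\}$ it can take any value $\ge 2$), but this does not matter, since for the step you only need that $\xi_{x,u}$ depends on $u$ through the class-invariant $\nu_p(u)$, and your treatment of $\zeta$ via symmetry on units is fine.

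Your suspicion about the base case is not merely a technicality but an actual obstruction: for a generic target matrix $A$ the first identity is \emph{false} at $n=3$, since $\pi_{u,3}$ is supported on the single point $u=-A_{2,2}$ while the class of that point need not be a singleton. Hence the first identity, read literally for all $n\ge 3$ and arbitrary $A$, is not correct as stated; the induction cannot start. What rescues the situation in the cases the paper actually cares about ($A=\pm\id$) is that $-A_{2,2}=\mp 1$ lies in the singleton class $u_{\mp 1,r}$, so class-invariance does hold at $n=3$ (and a short direct check gives it at $n=4$ as well, where $\pi_{u,4}=p^{\nu_p(u)}$). For general $A$ the robust statement is the class-level recursion you derive directly; the first identity should be read as holding only once $n$ is past the initial lengths, or for those $A$ for which the base cases happen to be class-constant.
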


We can now compute the required values:

\begin{propo}
Let $m:=|R^x|=p^r-p^{r-1}$. We have
\[ |e| = p^r-3p^{r-1}, \quad
n_i := |u_{d,i}|=\begin{cases} p^{r-i}-p^{r-i-1} & i<r \\ 1 & r=i \end{cases}, \]
and for $0<i,j<r$, $i\ne j$, $0\le k\le r$, $k\le \ell$, $x\in R$ we obtain:
\begin{eqnarray}
\xi_{(x),u_{\pm 1,k}} &=& |(x)| \cdot |u_{\pm 1,k}|, \\
\xi_{(x),e} &=& |(x)| \cdot |e|, \\
\xi_{u_{-1,\ell},u_{0,k}} &=& p^k\cdot |u_{-1,\ell}|\cdot |u_{0,k}|,
\end{eqnarray}
and $\xi_{(*),(*)}=0$ in all the remaining cases, and
\begin{eqnarray}
\zeta_{u_{1,i},u_{-1,j}} &=& \zeta_{u_{-1,j},u_{1,i}} = \min(i,j)\cdot m \cdot |u_{1,i}|\cdot |u_{-1,j}|, \\
                         &=& \min(i,j) ((p^{3r-i-j}-p^{3r-i-j-3})-3(p^{3r-i-j-1}-p^{3r-i-j-2})), \\
\zeta_{u_{1,i},u_{-1,i}} &=& \zeta_{u_{-1,i},u_{1,i}} = (i p^{2r-i}-(2i-1)p^{2r-i-1}+ip^{2r-i-2}) |u_{1,i}|, \\
                         &=& i (p^{3r-2i}-p^{3r-2i-3})-(3i-1)(p^{3r-2i-1}-p^{3r-2i-2}), \\
\zeta_{u_{1,r},u_{-1,r}} &=& \zeta_{u_{-1,r},u_{1,r}} = r p^r-(r-1) p^{r-1}, \\
\zeta_{e,e} &=& p^{2r-1} |e| = p^{2r-1} (p^r-3p^{r-1}),
\end{eqnarray}
and $\zeta_{(*),(*)}=0$ in all the remaining cases.
\end{propo}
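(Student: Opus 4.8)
The plan is to reduce every assertion to the fact that $\xi_{x,u}$ and $\zeta_{x,u}$ each depend on their arguments only through a few $p$-adic valuations that are \emph{constant on the classes} $e,u_{d,i}$. Throughout I assume $p$ is odd, so that $-1,0,1$ are distinct modulo $p$ and $R=e\cup\bigcup_{d,i}u_{d,i}$ is genuinely the claimed disjoint union (for $p=2$ the classes $u_{1,\cdot}$ and $u_{-1,\cdot}$ overlap and this partition degenerates).

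First I would read off the cardinalities by counting residues: an element of $e$ is determined by a residue modulo $p$ outside $\{-1,0,1\}$, giving $(p-3)p^{r-1}=p^r-3p^{r-1}$ elements; $u_{d,i}$ with $i<r$ consists of the lifts modulo $p^r$ of a fixed residue modulo $p^i$ that are not lifts modulo $p^{i+1}$, hence $p^{r-i}-p^{r-i-1}$ elements; and $u_{d,r}=\{d\}$. For $\xi$ I would use that $\nu_p(w)$ is constant on each class, namely $0$ on $e$ and on $u_{\pm1,k}$ (where $w$ is a unit) and $k$ on $u_{0,k}$, while $\nu_p(v+1)$ is constant as well, equal to $0$ except on $u_{-1,\ell}$ where it is $\ell$ (here $p\neq 2$ is what forces $\nu_p(v+1)=0$ on $u_{1,\cdot}$, since there $v+1\equiv 2$). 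Because $\xi_{v,w}=p^{\nu_p(w)}$ precisely when $\nu_p(w)\le\nu_p(v+1)$ and vanishes otherwise, the value $\xi_{v,w}$ is constant on any product of two classes, so $\xi_{\udi{x},\udi{u}}$ equals $|\udi{x}|\,|\udi{u}|$ times that common value. This gives $|\udi{x}|\,|u_{\pm1,k}|$ and $|\udi{x}|\,|e|$ (common value $1$), and $p^{k}|u_{-1,\ell}|\,|u_{0,k}|$ when $k\le\ell$, while every pair whose second entry is some $u_{0,k}$ ($k\ge1$) and whose first entry is not a $u_{-1,\ell}$ with $\ell\ge k$ yields $0$.

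For $\zeta$ the same principle applies, but the relevant invariant is $\nu_p(x+u)$ together with the unit status of $x$. Since $\zeta_{x,u}\neq 0$ forces $x\in R^\times$ and $x+u\notin R^\times$, reducing modulo $p$ shows at once that the only class-pairs that can contribute are $u_{1,i}\times u_{-1,j}$ (and its transpose) and $e\times e$; all others vanish. The computational key is the uniform rewrite, valid whenever $x\in R^\times$,
\[
\zeta_{x,u}=(p^r-p^{r-1})\,\nu_p(x+u)+p^{r-1}\,[\,x+u=0\,],
\]
which matches the three cases of the definition once one uses $\nu_p(0)=r$. On the off-diagonal block $u_{1,i}\times u_{-1,j}$ with $i\neq j$ (both $<r$), writing $x=1+ap^i$, $u=-1+bp^j$ with $p\nmid a,b$ gives $\nu_p(x+u)=\min(i,j)$ and $x+u\neq 0$, so $\zeta_{x,u}$ is constant on the block and equals $\min(i,j)(p^r-p^{r-1})$; multiplying by the two cardinalities gives the stated formula, and the symmetry of $\nu_p(x+u)$ yields the transposed identity.

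The hard part is the diagonal blocks, where $\nu_p(x+u)$ is no longer constant and one must sum over the fibre. Fixing $x$ and summing the rewrite over the second argument, I would invoke the elementary identity $\sum_{t\in\ZZ/p^s\ZZ}\nu_p(t)=[s]_p$ (in the spirit of Lemma \ref{lem:mu_uv}) together with the observation that removing from such a sum a single nonzero residue class modulo $p$ deletes only units, hence does not change it. For $e\times e$ the map $u\mapsto x+u$ sends the $p^{r-1}$ contributing $u$ bijectively onto $pR$, so $\sum_{u\in e}\zeta_{x,u}=(p^r-p^{r-1})[r]_p+p^{r-1}=p^{2r-1}$ and therefore $\zeta_{e,e}=p^{2r-1}|e|$. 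For $u_{1,i}\times u_{-1,i}$ with $i<r$, writing $x+u=(a+b)p^i$ reduces $\sum_u\nu_p(x+u)$ to $i(p^{r-i}-p^{r-i-1})+[r-i]_p$ (the unit $b=t-a$ only forbids $t\equiv a\md p$, which is harmless); then $(p^r-p^{r-1})$ times this, plus the lone $p^{r-1}$ coming from $u=-x$, simplifies to $ip^{2r-i}-(2i-1)p^{2r-i-1}+ip^{2r-i-2}$, giving the claimed product with $|u_{1,i}|$. Finally $u_{1,r}\times u_{-1,r}=\{(1,-1)\}$ is the unique pair with $x+u=0$, so $\zeta_{1,-1}=rp^r-(r-1)p^{r-1}$. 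I expect the only real obstacle to be keeping track of the boundary term $[\,x+u=0\,]$ in each diagonal sum; once it is isolated as the additive $p^{r-1}$ above, everything rests on the two valuation identities.
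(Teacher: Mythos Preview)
Your argument is correct. The paper explicitly omits its own proof of this proposition (the authors write just before it: ``We omit the proofs since they do not give any relevant new insights''), so there is nothing to compare against; your valuation bookkeeping---the rewrite $\zeta_{x,u}=(p^r-p^{r-1})\nu_p(x+u)+p^{r-1}[x+u=0]$ together with $\sum_{t\in\ZZ/p^s\ZZ}\nu_p(t)=[s]_p$---is precisely the sort of direct verification they had in mind. Your remark that $p$ must be odd for the classes $e,u_{d,i}$ to partition $R$ is a point the paper leaves implicit.
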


In principle, for a fixed matrix $A$, one can use the above formulae to obtain closed formulae for the numbers of solutions.

As a last remark, we show that the sum over units satisfies a simpler recursion:

\begin{propo}\label{prop:tau}
Let $n\ge 5$ and
$$
\tau_n := \sum_{(u),\:\: u\in R^\times} \pi_{(u),n}.
$$
Then
$$
\tau_n = (p^r-p^{r-1}) \tau_{n-1} + p^{2r-1} \tau_{n-2}.
$$
\end{propo}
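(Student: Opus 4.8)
The plan is to feed the recursion of the preceding corollary into the definition of $\tau_n$ and thereby reduce everything to two ``row-sum'' identities for the tables $\xi$ and $\zeta$. Recall that the unit classes of $R=\ZZ/p^r\ZZ$ are exactly $e$ together with $u_{1,i}$ and $u_{-1,i}$ for $i=1,\ldots,r$, that their cardinalities add up to $|R^\times|=p^r-p^{r-1}=m$, and that the remaining classes $u_{0,i}$ consist of non-units. Starting from the corollary's recursion for an arbitrary unit class $\udi{u}$, I would sum over all unit classes $\udi{u}$ and interchange the two summations to obtain
$$
\tau_n = \sum_{\udi{x},\,x\in R^\times} \frac{1}{|\udi{x}|}\Bigl(\pi_{\udi{x},n-1}\!\!\sum_{\udi{u},\,u\in R^\times}\!\!\xi_{\udi{x},\udi{u}} \;+\; \pi_{\udi{x},n-2}\!\!\sum_{\udi{u},\,u\in R^\times}\!\!\zeta_{\udi{x},\udi{u}}\Bigr).
$$
Hence it suffices to prove that for every unit class $\udi{x}$ one has $\sum_{\udi{u},\,u\in R^\times}\xi_{\udi{x},\udi{u}}=m\,|\udi{x}|$ and $\sum_{\udi{u},\,u\in R^\times}\zeta_{\udi{x},\udi{u}}=p^{2r-1}\,|\udi{x}|$; substituting these back collapses the double sum to $m\,\tau_{n-1}+p^{2r-1}\,\tau_{n-2}$.

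The $\xi$-identity is immediate. For a unit target $\udi{u}\in\{e,u_{\pm 1,k}\}$ the explicit table gives $\xi_{\udi{x},\udi{u}}=|\udi{x}|\cdot|\udi{u}|$ (the entries of the form $p^k|\cdot|\,|\cdot|$ occur only for the non-unit targets $u_{0,k}$, which are not summed over), so $\sum_{\udi{u},\,u\in R^\times}\xi_{\udi{x},\udi{u}}=|\udi{x}|\sum_{\udi{u},\,u\in R^\times}|\udi{u}|=|\udi{x}|\cdot|R^\times|=m\,|\udi{x}|$.

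The $\zeta$-identity is the technical heart. Since $\zeta_{\udi{x},\udi{u}}\ne 0$ forces $x\in R^\times$ and $x+u\equiv 0\md p$, for a fixed unit class $\udi{x}$ only a short list of targets contributes, and I would split into cases. For $\udi{x}=e$ only $\zeta_{e,e}=p^{2r-1}|e|$ survives, which yields the claim at once. For $\udi{x}=u_{1,i}$ (and symmetrically $u_{-1,i}$), the nonzero contributions come from $\udi{u}=u_{-1,j}$, $j=1,\ldots,r$, and I would insert the table entries: the off-diagonal terms $\min(i,j)\,m\,|u_{1,i}|\,|u_{-1,j}|$ for $j\ne i$ and the special diagonal value for $j=i$. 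After dividing by $|u_{1,i}|$, the off-diagonal sum $\sum_{j\ne i}\min(i,j)\,|u_{-1,j}|$ splits into the piece $\sum_{j>i}$, which telescopes to $i\,p^{r-i-1}$, and the piece $\sum_{j<i}=\sum_{j<i}j\,|u_{-1,j}|$, which is evaluated by Lemma \ref{lem:some_sums}(b); adding the diagonal correction one checks that the total equals $p^{2r-1}$. The apex case $\udi{x}=u_{1,r}=\{1\}$ is a short separate computation (no $j>i$ terms, with the diagonal value $rp^r-(r-1)p^{r-1}$), again using Lemma \ref{lem:some_sums}(b) with upper limit $r-1$.

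The only genuine obstacle is this last cancellation: the off-diagonal geometric sums leave behind monomials $p^{2r-i},p^{2r-i-1},p^{2r-i-2}$ with $i$-dependent coefficients, and one must verify that the diagonal correction $i\,p^{2r-i}-(2i-1)p^{2r-i-1}+i\,p^{2r-i-2}$ cancels exactly this $i$-dependence so that the sum is the constant $p^{2r-1}$ for every $i$. Once both row-sum identities are established, the reorganized double sum gives $\tau_n=(p^r-p^{r-1})\tau_{n-1}+p^{2r-1}\tau_{n-2}$ for all $n\ge 5$, as claimed.
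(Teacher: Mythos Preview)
Your approach is essentially the paper's: both reduce the claim to the two row-sum identities $\sum_{u\in R^\times}\xi_{x,u}=m$ and $\sum_{u\in R^\times}\zeta_{x,u}=p^{2r-1}$ for every unit $x$, and then collapse the double sum. The difference is only in how these identities are established. The paper bypasses the class-level Corollary and the table entirely, working instead with Proposition~\ref{prop:recursion} and the original definitions of $\xi_{x,u}$, $\zeta_{x,u}$: for any unit $u$ one has $\nu_p(u)=0\le\nu_p(x+1)$, so $\xi_{x,u}=1$ and the first identity is a one-liner; for the second, as $u$ runs over $R^\times$ the element $x+u$ hits $0$ exactly once and each level $\{a:\nu_p(a)=k\}$ exactly $n_k$ times for $1\le k\le r-1$, giving
\[
\sum_{u\in R^\times}\zeta_{x,u}=\bigl(rp^r-(r-1)p^{r-1}\bigr)+m\sum_{k=1}^{r-1}k\,n_k=p^{2r-1}
\]
in a single computation, independent of which class $x$ lies in. Your route through the class table is correct but forces you to redo this same sum separately for $\udi{x}=e$, $\udi{x}=u_{\pm1,i}$ with $i<r$, and $\udi{x}=u_{\pm1,r}$, and then to verify the diagonal cancellation by hand; the element-level argument shows at once why no case split is needed.
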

\begin{proof}
By Proposition \ref{prop:recursion}, we have
\begin{eqnarray*}
\tau_n &=& \sum_{u,x\in R^\times} \pi_{x,n-1} \xi_{x,u} + \pi_{x,n-2} \zeta_{x,u} \\
&=& \sum_{x\in R^\times} \pi_{x,n-1} \sum_{u\in R^\times} \xi_{x,u} + \pi_{x,n-2} \sum_{u\in R^\times}\zeta_{x,u}.
\end{eqnarray*}
For the first sum we get $\sum_{x\in R^\times} \pi_{x,n-1} \sum_{u\in R^\times} \xi_{x,u} = (p^r-p^{r-1})\sum_{x\in R^\times} \pi_{x,n-1}$ because $\xi_{x,u}=1$ if $u$ is a unit.
The second sum is
\begin{eqnarray*}
\sum_{x\in R^\times} \pi_{x,n-2} \sum_{u\in R^\times}\zeta_{x,u} &=& 
\sum_{x\in R^\times} \pi_{x,n-1} \left( \underbrace{(rp^r-(r-1)p^{r-1})}_{u=-x} + \sum_{k=1}^{r-1} \underbrace{(p^r-p^{r-1}) n_k k}_{\nu_p(x+u)=k} \right)\\
&=& \sum_{x\in R^\times} \pi_{x,n-1} p^{2r-1}.
\end{eqnarray*}
\end{proof}

\section{Appendix}\label{sec:appendix_proof_of_star}
\begin{propo}
    Let $p$ be a prime number, let $x\in\mathbb{R}\setminus \{0, p, \pm\sqrt{p}\}$, let $u,v,y,z\in\mathbb{R}\setminus \{0\}$, and let $\ell\in\mathbb{N}$. Then, the following equation holds:
\begin{eqnarray*}
\frac{u^2(x-1)z \left( \frac{pv}{xz} -1 \right)}{vxy\left( \frac{x}{p} -1 \right)} &=& (\ell -1) \left( y-\frac{y}{p} \right) \left( \frac{u \left( \frac{p^2u}{xy^2} -1 \right)}{y \left( \frac{x}{p^2} -1 \right)} - \frac{u\left( \frac{p^4u}{x^2y^2}-1\right) }{py\left( \frac{x}{p^2} -1 \right)} \right)\\
&+& \frac{(\ell -1)pu^2{\left( y - \frac{y}{p} \right)}^2\left( \frac{p^2\left(\frac{1}{y}-\frac{1}{pz}\right)}{x\left(\frac{1}{p} -1 \right)} - \frac{\frac{y}{u} - \frac{pz}{vx}}{\frac{p}{x}-1} \right) \left( \frac{x}{p} -1 \right)}{xy^3\left( \frac{x}{p^2}-1 \right)}\\
&+& \frac{\left(\frac{(\ell -1)y^2}{z} - \frac{(2\ell -3)y^2}{pz} + \frac{(\ell -1)y^2}{p^2z}\right) pu^2z^2 \left( \frac{x}{p}-1\right)\left( \frac{p^2v}{xz^2} -1 \right)}{vxy^3\left( \frac{x}{p^2}-1 \right)}\\
&-& \frac{pu^2{\left( y - \frac{y}{p} \right)}^2\left( \frac{x}{p}-1 \right) \left( \frac{ \frac{(\ell -1)p^4}{xz} - \frac{(\ell -2)p^3}{xz} - \frac{p^2}{x} }{{(p-1)}^2} + \frac{\frac{(\ell -2)xz}{pv} - \frac{(\ell -1)x^2z}{p^2v} +1 }{{\left(\frac{x}{p}-1\right)}^2} \right)}{xy^3\left( \frac{x}{p^2}-1 \right)}.
\end{eqnarray*}
\end{propo}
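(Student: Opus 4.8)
The statement is a pure identity of rational functions in the indeterminates $x,u,v,y,z$ over $\mathbb{Q}(p)$, with $\ell$ entering only as a formal affine-linear parameter; the excluded values of the variables serve solely to guarantee that every denominator appearing is nonzero, so they may be ignored in the algebra and reinstated at the end. Accordingly, the plan is to clear all denominators and reduce the claim to a polynomial identity in $x,u,v,y,z,\ell$, which is then settled by comparing coefficients. The denominators that occur are built from the factors $v$, $x$, $y$, $z$, $(x/p-1)$, $(x/p^2-1)$, $(p-1)$ and the constant $(1/p-1)$, none of which vanishes under the stated hypotheses; crucially, none of them involves $u$, since the only place $u$ appears in a denominator is the term $y/u$ inside the second summand, and there it is absorbed by the outer factor $u^2$. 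Hence after multiplying through by the common denominator both sides are genuine polynomials, and the whole proof is a finite coefficient check in the spirit of the identity $\alpha=\beta+\gamma$ already established in Lemma~\ref{lem:alpha_beta_gamma} for the case $\ell=r$.

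Rather than expanding the full product at once, I would exploit two gradings to break the single identity into several much smaller ones. First, both sides are affine-linear in $\ell$, so it suffices to verify the identity at two values of $\ell$ — most conveniently at $\ell=1$, where the factors $(\ell-1)$ annihilate the first and second summands on the right entirely and simplify the third and fourth, leaving the short identity ``left-hand side $=$ third summand $-$ fourth summand'', together with the vanishing of the $\ell$-coefficient of the right-hand side. Second, one checks by inspection that the left-hand side and the third and fourth summands are homogeneous of degree $2$ in $u$, whereas the first and second summands each split into a degree-$2$ and a degree-$1$ part in $u$ (the degree-$1$ contributions coming from the constant $-1$ inside the first summand and from the $y/u$ inside the second).

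Separating by the power of $u$ then decomposes the proposition into a handful of self-contained pieces: the $u^{1}$-coefficient equation, which asserts that the linear-in-$u$ parts of the first two summands cancel against each other (the third and fourth summands, and the left-hand side, being pure $u^{2}$, do not participate); and the $u^{2}$-coefficient equation, which carries the main content and is itself halved by the $\ell=1$ specialization versus the $\ell$-coefficient split above. Each resulting piece is obtained by reducing its finitely many summands to the common denominator and matching the coefficients of the monomials in $x,v,y,z$ — an entirely mechanical verification, which is exactly the computation that a computer algebra system performs when one ``checks $(*)$ in \textsc{Sage}''.

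The main obstacle is organizational rather than conceptual. The fourth summand is the delicate one: it carries nested fractions over the two distinct quadratic denominators $(p-1)^2$ and $(x/p-1)^2$, and it enters with a minus sign, so the common denominator is large and sign errors in its contribution are the most likely source of mistakes. The safest route is to normalize each of the four summands to the common denominator individually, record its numerator as a polynomial in $x,u,v,y,z,\ell$, and only then add them; once this bookkeeping is done carefully, the coefficient comparison in each graded piece is routine and confirms the identity marked $(*)$ in the proof of Theorem~\ref{thm:sigma}.
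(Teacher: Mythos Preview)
Your proposal is correct: the claim is indeed a rational-function identity that becomes a polynomial identity after clearing denominators, and your two gradings---affine linearity in $\ell$ and the split into $u^{1}$- and $u^{2}$-components---are valid and give a clean decomposition (in particular, your observation that the $u^{1}$-parts of the first two summands must cancel against each other is correct and isolates a nontrivial sub-identity early).

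The paper's own proof follows the same overall strategy---clear denominators and verify a polynomial identity---but organizes the algebra differently. It multiplies through by the common denominator in two explicit stages (first by $vpxy^{3}(x/p-1)(x/p^{2}-1)$, then by $p^{2}xz$), simplifying at each step, and then regroups the resulting polynomial equation according to the scalar coefficients $(\ell-1)$, $(2\ell-3)$, $(\ell-2)$; the final equation is reduced to three elementary polynomial identities recorded in a separate lemma and checked by hand. Your $u$-degree grading is not used in the paper and arguably streamlines the bookkeeping by peeling off the linear-in-$u$ cancellation at the outset; on the other hand, the paper's approach has the advantage of being fully written out, so every line is directly checkable, whereas your proposal leaves the final coefficient comparison as an unexecuted (though genuinely routine) mechanical step.
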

\begin{proof}
We multiply both sides of the equation by $vpxy^3\left( \frac{x}{p}-1 \right) \left( \frac{x}{p^2}-1 \right)$ and obtain the following equivalent assertion:
\begin{eqnarray*}
\phantom{0} &\phantom{=}& py^2\left( \frac{x}{p^2}-1 \right)u^2(x-1)z\left( \frac{pv}{xz} -1 \right)\\
&=& vpxy^2\left( \frac{x}{p}-1 \right) (\ell -1) \left( y- \frac{y}{p} \right) u \left( \frac{p^2u}{xy^2}-1 \right)\\
&-& vxy^2\left( \frac{x}{p}-1 \right)u\left( \frac{p^4u}{x^2y^2}-1 \right)(\ell -1)\left( y- \frac{y}{p} \right)\\
&+& vp {\left( \frac{x}{p} -1\right)}^2 (\ell -1)pu^2 {\left( y- \frac{y}{p} \right)}^2 \left( \frac{p^2\left(\frac{1}{y}-\frac{1}{pz}\right)}{x\left(\frac{1}{p} -1 \right)} - \frac{\frac{y}{u} - \frac{pz}{vx}}{\frac{p}{x}-1} \right)\\
&+& p \left( \frac{x}{p} -1\right)\left(\frac{(\ell -1)y^2}{z} - \frac{(2\ell -3)y^2}{pz} + \frac{(\ell -1)y^2}{p^2z}\right) pu^2z^2 \left( \frac{x}{p}-1\right)\left( \frac{p^2v}{xz^2} -1 \right)\\
&-& vp\left( \frac{x}{p} -1\right) pu^2{\left( y - \frac{y}{p} \right)}^2\left( \frac{x}{p}-1 \right) \left( \frac{ \frac{(\ell -1)p^4}{xz} - \frac{(\ell -2)p^3}{xz} - \frac{p^2}{x} }{{(p-1)}^2} + \frac{\frac{(\ell -2)xz}{pv} - \frac{(\ell -1)x^2z}{p^2v} +1 }{{\left(\frac{x}{p}-1\right)}^2} \right).
\end{eqnarray*}
This is equivalent to 
\begin{eqnarray*}
\phantom{0} &\phantom{=}& 
y^2\left(\frac{x}{p^2}-1\right)u^2(x-1)\left(\frac{p^2v}{x}-zp\right)\\
&=& v\left(\frac{x}{p}-1\right) (\ell -1)y(p-1)u(p^2u - xy^2) -v\left(\frac{x}{p}-1\right)u\left(\frac{p^4u}{x}-xy^2\right)(\ell -1)\left(y - \frac{y}{p}\right)\\
&+& v{\left(\frac{x}{p}-1\right)}^2(\ell -1)u^2y^2{(p-1)}^2 \left( \frac{p^2\left( \frac{p}{y} - \frac{1}{z} \right)}{x(1-p)} - \frac{\frac{xy}{u} - \frac{pz}{v}}{p-x} \right)\\
&+& (x-p)\left(\frac{(\ell -1)y^2}{z} - \frac{(2\ell -3)y^2}{pz} + \frac{(\ell -1)y^2}{p^2z}\right)(x-p)\left( \frac{p^2u^2v}{x} - u^2z^2 \right)\\
&-& v{\left(\frac{x}{p}-1\right)}^2u^2y^2{(p-1)}^2\left( \frac{ \frac{(\ell -1)p^4}{xz} - \frac{(\ell -2)p^3}{xz} - \frac{p^2}{x} }{{(p-1)}^2} + \frac{\frac{(\ell -2)xz}{pv} - \frac{(\ell -1)x^2z}{p^2v} +1 }{{\left(\frac{x}{p}-1\right)}^2} \right).
\end{eqnarray*}
This is equivalent to 
\begin{eqnarray*}
\phantom{0} &\phantom{=}& 
y^2\left(\frac{x}{p^2}-1\right)u^2(x-1)\left(\frac{p^2v}{x}-zp\right)\\
&=& v\left(\frac{x}{p}-1\right)(\ell -1)y(p-1)u(p^2u-xy^2) - v\left(\frac{x}{p}-1\right)u\left( \frac{p^4u}{x}-xy^2 \right) (\ell -1)\left(y-\frac{y}{p}\right)\\
&+& v{\left(\frac{x}{p}-1\right)}^2(\ell -1)u^2y^2(1-p)\frac{p^2}{x}\left( \frac{p}{y} - \frac{1}{z} \right)\\
&+& \frac{v}{p^2}(x-p)(\ell -1)u^2y^2{(1-p)}^2\left( \frac{xy}{u} - \frac{pz}{v} \right)\\
&+& (x-p)\left(\frac{(\ell -1)y^2}{z} - \frac{(2\ell -3)y^2}{pz} + \frac{(\ell -1)y^2}{p^2z}\right)(x-p)\left( \frac{p^2u^2v}{x} - u^2z^2 \right)\\
&+& v{\left( \frac{x}{p} -1 \right)}^2u^2y^2 \left( \frac{p^2}{x} + \frac{(\ell -2)p^3}{xz} - \frac{(\ell -1)p^4}{xz} \right) - vu^2y^2{(p-1)}^2\left( \frac{(\ell -2)xz}{pv} - \frac{(\ell -1)x^2z}{p^2v} + 1 \right).
\end{eqnarray*}
We simplify some parentheses and obtain:
\begin{eqnarray*}
\phantom{0} &\phantom{=}& 
y^2\left(\frac{x}{p^2}-1\right)u^2(x-1)\left(\frac{p^2v}{x}-zp\right)\\
&=& v\left(\frac{x}{p}-1\right)(\ell -1)y(p-1)u(p^2u-xy^2) - v\left(\frac{x}{p}-1\right)u\left( \frac{p^4u}{x}-xy^2 \right) (\ell -1)\left(y-\frac{y}{p}\right)\\
&+& v{\left(\frac{x}{p} -1\right)}^2(\ell -1)u^2y(1-p)\frac{p^3}{x} - v{\left(\frac{x}{p} -1\right)}^2(\ell -1)u^2y^2(1-p)\frac{p^2}{xz}\\
&+& \frac{v}{p^2}(x-p)(\ell -1){(1-p)}^2uxy^3 - \frac{z}{p} (x-p)(\ell -1)u^2y^2{(1-p)}^2\\
&+& (x-p)\left(\frac{(\ell -1)y^2}{z} - \frac{(2\ell -3)y^2}{pz} + \frac{(\ell -1)y^2}{p^2z}\right)(x-p)\left( \frac{p^2u^2v}{x} - u^2z^2 \right)\\
&+& v{\left( \frac{x}{p} -1 \right)}^2u^2y^2 \left( \frac{p^2}{x} + \frac{(\ell -2)p^3}{xz} - \frac{(\ell -1)p^4}{xz} \right) - u^2y^2{(p-1)}^2\left( \frac{(\ell -2)xz}{p} - \frac{(\ell -1)x^2z}{p^2} + v \right).
\end{eqnarray*}
We multiply both sides of the previous equation by $p^2xz$ and obtain:
\begin{eqnarray*}
\phantom{0} &\phantom{=}& u^2z(x-1)y^2(x-p^2)(p^2v-zpx)\\
&=& pxzv(x-p)(\ell -1)y(p-1)u(p^2u-xy^2) - zv(x-p)u(p^4u-x^2y^2)(\ell -1)y(p-1)\\
&+& vz{(x-p)}^2(\ell -1)u^2y(1-p)p^3 - v{(x-p)}^2(\ell -1)u^2y^2(1-p)p^2\\
&+& vz(x-p)(\ell -1){(1-p)}^2ux^2y^3 -pxz^2(x-p)(\ell -1)u^2y^2{(1-p)}^2\\
&+& (x-p)\left( p^2(\ell -1)y^2 - p(2\ell -3)y^2 + (\ell -1)y^2 \right)(x-p)(p^2u^2v -u^2z^2x)\\
&+& v{(x-p)}^2u^2y^2\left( p^2z +(\ell -2)p^3 - (\ell -1)p^4 \right)\\
&-& xzu^2y^2{(p-1)}^2 \left( (\ell -2)pxz - (\ell -1)x^2z +p^2v \right).
\end{eqnarray*}
After rearrangement, this equation becomes:
\begin{eqnarray*}
\phantom{0} &\phantom{=}& u^2z(x-1)y^2(x-p^2)(p^2v-zpx)\\
&=\phantom{(2\ell -3)}& p^2zv{(x-p)}^2u^2y^2 - p^2vxzu^2y^2{(p-1)}^2\\
&+\ (\ell -2)\cdot & [p^3u^2vy^2{(x-p)}^2 -px^2z^2u^2y^2{(p-1)}^2]\\
&-\ (2\ell -3)\cdot & [py^2{(x-p)}^2(p^2u^2v - u^2z^2x)]\\
&+\ (\ell -1)\cdot & [pxyzuv(x-p)(p-1)(p^2u-xy^2) - zuvy(p-1)(x-p)(p^4u -x^2y^2)]\\
&+\ (\ell -1)\cdot & [p^3u^2vyz(1-p){(x-p)}^2 - p^2u^2vy^2(1-p){(x-p)}^2]\\
&+\ (\ell -1)\cdot & [x^2y^3zuv{(1-p)}^2(x-p) - pxy^2z^2u^2{(1-p)}^2(x-p)]\\
&+\ (\ell -1)\cdot & [(p^2+1)y^2{(x-p)}^2p^2u^2v - (p^2+1)y^2{(x-p)}^2u^2z^2x]\\
&+\ (\ell -1)\cdot & [x^3z^2u^2y^2{(p-1)}^2 - p^4y^2u^2v{(x-p)}^2].
\end{eqnarray*}
This last equation follows from the Lemma below, as was to be verified.
\end{proof}

\begin{lemma}
    Let $p$ be a prime number, let $x,y,z,u,v\in\mathbb{R}$, and let $\ell \in \mathbb{N}$. Then, the following hold.
    \begin{itemize}
        \item[\rm (a)] We have
 \begin{eqnarray*}
\phantom{0} &\phantom{=}&  pu^2y^2\left(p^2v{(p-x)}^2 +xz^2\left( p^2(x-2) +2px -2x^2 +x \right) \right)\\ 
 &=& [pxyzuv(x-p)(p-1)(p^2u-xy^2) - zuvy(p-1)(x-p)(p^4u -x^2y^2)]\\
&+& [p^3u^2vyz(1-p){(x-p)}^2 - p^2u^2vy^2(1-p){(x-p)}^2]\\
&+& [x^2y^3zuv{(1-p)}^2(x-p) - pxy^2z^2u^2{(1-p)}^2(x-p)]\\
&+& [(p^2+1)y^2{(x-p)}^2p^2u^2v - (p^2+1)y^2{(x-p)}^2u^2z^2x]\\
&+& [x^3z^2u^2y^2{(p-1)}^2 - p^4y^2u^2v{(x-p)}^2].
 \end{eqnarray*}
\item[\rm (b)]
We have
 \begin{eqnarray*}
\phantom{0} &\phantom{=}&  pxy^2z^2u^2\left(p^2x-p^2 -x^2 +x\right)\\
&=& (\ell -1)\cdot \left[pu^2y^2\left(p^2v{(p-x)}^2 +xz^2\left( p^2(x-2) +2px -2x^2 +x \right) \right)\right]\\
&-& (2\ell -3)\cdot \left[ py^2{(x-p)}^2(p^2u^2v - u^2z^2x) \right]\\
&+& (\ell -2)\cdot \left[ p^3u^2vy^2{(x-p)}^2 -px^2z^2u^2y^2{(p-1)}^2 \right].
\end{eqnarray*}
\item[\rm (c)]
We have
 \begin{eqnarray*}
\phantom{0} &\phantom{=}&  u^2z(x-1)y^2(x-p^2)(p^2v-zpx)\\
&=& pxy^2z^2u^2\left(p^2x-p^2 -x^2 +x\right)\\
&+& p^2zv{(x-p)}^2u^2y^2 - p^2vxzu^2y^2{(p-1)}^2
\end{eqnarray*}
    \end{itemize}
\end{lemma}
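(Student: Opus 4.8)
The plan is to treat all three assertions as polynomial identities over $\mathbb{Z}$ in the indeterminates $p,x,y,z,u,v$, with $\ell$ entering only affinely in part~(b); they are exactly the hand-checkable substitute for the step $(*)$ in the proof of Theorem~\ref{thm:sigma}. Logically the three parts chain together: part~(a) collapses the bracketed $(\ell-1)$-contribution appearing in that proof into the compact form $pu^2y^2\bigl(p^2v(p-x)^2+xz^2(\ldots)\bigr)$; part~(b) then assembles all $\ell$-dependent pieces and shows that the $\ell$-dependence cancels; and part~(c) adds the $\ell$-free remainder to recover the target left-hand side. Each identity is verified by expansion, but two reductions keep the expansions to a handful of monomials.

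The first reduction is that every monomial carries the common factor $u^2y^2$. This is immediate in~(b) and~(c); in~(a) it holds only after two pairwise cancellations (see below). Dividing out $u^2y^2$ leaves identities in the four variables $p,x,z,v$. The second reduction is that, in these reduced identities, $v$ occurs to degree $1$ and $z$ to degree $2$, with no surviving $v^0z^0$, $v^1z^1$ or $v^1z^2$ terms; hence it suffices to compare the coefficient of $v$ and the coefficient of $z^2$ separately, each a polynomial in $p$ and $x$ alone. For instance, after removing $u^2y^2$, part~(c) reads $z(x-1)(x-p^2)(p^2v-zpx)=pxz^2(p^2x-p^2-x^2+x)+p^2zv(x-p)^2-p^2vxz(p-1)^2$, and its $v$- and $z^2$-coefficients are the one-line identities $(x-1)(x-p^2)=(x-p)^2-x(p-1)^2$ and $-(x-1)(x-p^2)=p^2x-p^2-x^2+x$.

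For part~(a) the common factor is not visible term by term: the first bracket produces spurious monomials of type $u^2yzv$ and $uy^3zv$. The plan is to cancel the $u^2yzv$ part against the first term of the second bracket, namely $-p^3u^2yzv(p-1)(x-p)^2$, and the $uy^3zv$ part against the first term of the third bracket, $x^2y^3zuv(p-1)^2(x-p)$; after these two cancellations the surviving terms are all of type $u^2y^2$ and the second reduction applies. For part~(b), since $\ell$ is affine I would split off the coefficient of $\ell$ and the constant term: the coefficient of $\ell$ must vanish, which happens because its $v$-part collapses by $1-2+1=0$ and its $z^2$-part by a short cancellation in $p,x$, while the constant term is matched against the left-hand side by the same $v$/$z^2$ splitting.

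The point to stress is that there is no conceptual obstacle here: the content is entirely bookkeeping, and the only genuine risk is sign management while expanding products of several linear factors. The two reductions above confine each comparison to a few monomials in $p$ and $x$, so the identities can be confirmed by hand, with a computer-algebra verification (\textsc{Sage}) available as a final sanity check, exactly as $(*)$ itself is validated in the main text.
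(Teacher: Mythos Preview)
Your proposal is correct and follows the same approach as the paper, which proves the lemma by the single line ``Direct computation.'' Your write-up supplies a concrete and efficient organization of that computation---factoring out $u^2y^2$ after the two cancellations in~(a), then separating the $v$- and $z^2$-coefficients---and the key cancellations you describe (e.g.\ the $u^2yzv$ and $uy^3zv$ terms in the first bracket of~(a) against the leading terms of the second and third brackets) check out.
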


\begin{proof}
    Direct computation.
\end{proof}

\bibliographystyle{amsalpha}
\bibliography{refs}

\end{document}